\documentclass{article}
\usepackage[letterpaper]{geometry}

\usepackage{amsmath}             
\usepackage{amsfonts}
\usepackage{mathrsfs}
\usepackage{amsthm}
\usepackage{amssymb}
\usepackage[capitalize,nameinlink]{cleveref}
\usepackage[table]{xcolor}       
\usepackage{multirow}            
\usepackage{longtable}           
\usepackage[flushleft]{threeparttable}      %
\usepackage{booktabs}            
\usepackage{diagbox}             
\usepackage{calc}                
\usepackage{enumerate}
\usepackage{color}
\usepackage{url}
\usepackage{enumerate}
\usepackage[titletoc,title]{appendix}

\usepackage{tikz}
\usetikzlibrary{calc}
\usetikzlibrary{tikzmark}
\newcommand{\tm}{\tikzmark}

\usepackage{kbordermatrix, etoolbox}

\patchcmd{\kbordermatrix}{\right\kbrdelim}{\hspace*{-\arraycolsep}\right\kbrdelim}{}{}
\setlength{\kbcolsep}{0.2\arraycolsep}

\newtheorem{theorem}{Theorem}[section]
\newtheorem{corollary}[theorem]{Corollary}
\newtheorem{proposition}[theorem]{Proposition}
\newtheorem{lemma}[theorem]{Lemma}

\newtheorem{definition}[theorem]{Definition}
\newtheorem{example}[theorem]{Example}
\newtheorem{question}[theorem]{Question}

\DeclareMathOperator{\Ker}{Ker}
\DeclareMathOperator{\GF}{GF}
\DeclareMathOperator{\GH}{GH}

\newcommand{\Z}{{\mathbb{Z}}}
\newcommand{\ba}{{\mathbf{a}}}
\newcommand{\bb}{{\mathbf{b}}}
\newcommand{\bc}{{\mathbf{c}}}
\newcommand{\bd}{{\mathbf{d}}}
\newcommand{\be}{{\mathbf{e}}}
\newcommand{\br}{{\mathbf{r}}}
\newcommand{\bz}{{\mathbf{0}}}
\newcommand{\floor}[1]{{\lfloor{#1}\rfloor}}

\newcommand{\DrawBox}[3][]{%
  \tikz[overlay,remember picture]{%
    \coordinate (TopLeft)     at ($(#2)+(-0.1em,0.9em)$);
    \coordinate (BottomRight) at ($(#3)+(0.15em,-0.3em)$);%
    \path (TopLeft); \pgfgetlastxy{\XCoord}{\IgnoreCoord};
    \path (BottomRight); \pgfgetlastxy{\IgnoreCoord}{\YCoord};
    \fill [red,#1] (TopLeft) rectangle (BottomRight);
  }
}

\definecolor{thm1}{gray}{0.88}
\definecolor{thm2}{gray}{0.7}
\definecolor{new}{gray}{0.57}
\definecolor{base}{gray}{0.45}
\definecolor{basetext}{gray}{0}

\long\def\symbolfootnote[#1]#2{\begingroup%
\def\thefootnote{\fnsymbol{footnote}}\footnote[#1]{#2}\endgroup}

\allowdisplaybreaks[4]

\setcounter{MaxMatrixCols}{16}

\begin{document}

\title{A new structure for difference matrices over abelian $p$-groups}
\author{Koen van Greevenbroek \and Jonathan Jedwab}
\date{8 June 2018 (revised 28 November 2018)}
\maketitle

\symbolfootnote[0]{
Department of Mathematics,
Simon Fraser University, 8888 University Drive, Burnaby BC V5A 1S6, Canada.
\par
J.~Jedwab is supported by NSERC.
\par
Email: {\tt kvangree@sfu.ca}, {\tt jed@sfu.ca}
}

\begin{abstract}
A difference matrix over a group is a discrete structure that is intimately related to many other combinatorial designs, including mutually orthogonal Latin squares, orthogonal arrays, and transversal designs.
Interest in constructing difference matrices over $2$-groups has been renewed by the recent discovery that these matrices can be used to construct large linking systems of difference sets, which in turn provide examples of systems of linked symmetric designs and association schemes. 
We survey the main constructive and nonexistence results for difference matrices, beginning with a classical construction based on the properties of a finite field. We then introduce the concept of a contracted difference matrix, which generates a much larger difference matrix. We show that several of the main constructive results for difference matrices over abelian $p$-groups can be substantially simplified and extended using contracted difference matrices. In particular, we obtain new linking systems of difference sets of size $7$ in infinite families of abelian $2$-groups, whereas previously the largest known size was~$3$.
\end{abstract}

\section{Introduction}
\label{sec:introduction}

  Let $G$ be a non-trivial group. A \emph{$(G, m, \lambda)$ difference matrix over~$G$} is an $m \times \lambda |G|$ matrix $(a_{ij})$ with $0 \le i \le m-1$ and $0 \le j \le \lambda |G|-1$ and each entry $a_{ij} \in G$ such that, for all distinct rows $i$ and $\ell$, the multiset of ``differences''
\[
\{a_{ij}a_{\ell j}^{-1} : 0 \le j \le \lambda |G|-1\}
\]
contains each element of $G$ exactly $\lambda$ times. 
When the group $G$ is abelian, we shall (except in \cref{sec:red-link}) use additive rather than multiplicative group notation.

\begin{example}
  Let $G = \Z_2^3$ and represent the element $(u,v,w) \in G$ in the compressed form $uvw$. The matrix
  \begin{equation*} 
    \left[
    \begin{array}{cccccccc}
      000 & 000 & 000 & 000 & 000 & 000 & 000 & 000 \\
      \rowcolor{thm2} 000 & 101 & 111 & 010 & 110 & 011 & 001 & 100 \\
      000 & 100 & 010 & 110 & 001 & 101 & 011 & 111 \\
      \rowcolor{thm2} 000 & 111 & 110 & 001 & 011 & 100 & 101 & 010 \\
      000 & 001 & 101 & 100 & 111 & 110 & 010 & 011 \\
    \end{array}
    \right]
  \end{equation*}
is a $(G, 5, 1)$ difference matrix. For example, the differences between corresponding entries of the two shaded rows are
\[
000, 010, 001, 011, 101, 111, 100, 110,
\] 
in which each element of $G$ appears exactly once. 
\end{example}

Difference matrices are related to many other combinatorial designs, including mutually orthogonal Latin squares, orthogonal arrays, transversal designs, whist tournaments, generalized Steiner triple systems, and optical orthogonal codes \cite{colbourn-diffmatrices}, \cite{pan-chang}. 
Recently, difference matrices over $2$-groups were used as the key ingredient in a new construction of linking systems of difference sets \cite{jedwab-li-simon-arxiv}.
The central objective is to determine, for a given group $G$ and parameter $\lambda$, the largest number of rows $m$ for which a $(G,m,\lambda)$ difference matrix exists. Colbourn~\cite{colbourn-diffmatrices} gives a concise summary of known existence and nonexistence results as of 2007.

This paper is organized so that the material up to the end of \cref{sec:red-link} is a survey, whereas that from Section~\ref{sec:contr-diff-matrices} onwards presents new ideas and results.
In \cref{sec:basic} we describe some basic properties of difference matrices, including nonexistence results and connections to other combinatorial structures.
In \cref{sec:diff-matrices} we review the major constructions for difference matrices, principally: a classical construction over elementary abelian $p$-groups based on finite fields; a composition construction based on the Kronecker product; and a construction of 4-row difference matrices over abelian noncyclic groups. In \cref{sec:red-link} we explain how difference matrices with $\lambda=1$ over certain $2$-groups were recently used to construct linking systems of difference sets. 

In \cref{sec:contr-diff-matrices} we introduce the concept of a contracted difference matrix over an abelian $p$-group, which generates a much larger difference matrix over the same group. We derive a finite field construction, a composition construction, and an abelian noncyclic 2-group construction for contracted difference matrices. These constructions are significantly simpler and more compact than the corresponding constructions for difference matrices given in \cref{sec:diff-matrices}, but we show that they can often be used to produce results for difference matrices that are just as powerful as those obtained in \cref{sec:diff-matrices}.
In \cref{sec:new-contr-diff} we present four examples of contracted difference matrices found by computer search. These examples generate new infinite families of (contracted) difference matrices over abelian $2$-groups with more rows than previously known, from which we in turn construct larger linking systems of difference sets than previously known.
In \cref{sec:open-questions} we present some open questions about (contracted) difference matrices.
Appendix~\ref{sec:list-contr-diff} contains an example of the largest known contracted difference matrix over each abelian $2$-group of order at most~$64$.
Python 3 code for checking and searching for (contracted) difference matrices is available at https://gitlab.com/koenvg/contracted-difference-matrices.

\section{Basic properties}
\label{sec:basic}

In this section we present some basic properties of difference matrices, including nonexistence results and connections to other combinatorial designs.

If $G$ is a group and $A = (a_{ij})$ is a $(G,m,\lambda)$ difference matrix, then the difference matrix property is preserved when each entry of a column of $A$ is right-multiplied by a fixed $g \in G$, because $(a_{ij}g)(a_{\ell j}g)^{-1}= a_{ij} a_{\ell j}^{-1}$. By right-multiplying all entries of each column $j$ of $A$ by $a_{0j}^{-1}$, we may therefore assume that each entry of row $0$ of $A$ is~$1_G$. 
The difference property of the matrix then implies that, for each $i \ge 1$, row $i$ of $G$ contains every element of $G$ exactly $\lambda$ times. 
We may likewise right-multiply all entries of each row $i$ by $a_{i0}^{-1}$, so that each entry of column $0$ of $A$ is also~$1_G$. The resulting matrix is in \emph{normalized form}.

If a $(G,m,\lambda)$ difference matrix with $m \ge 2$ exists, then deleting one row gives a $(G,m-1,\lambda)$ difference matrix.

The existence of a $(G,m,\lambda)$ difference matrix implies the existence of a resolvable orthogonal array ${\rm OA}_\lambda(m,|G|)$ and a transversal design ${\rm TD}_\lambda(m+1,|G|)$, and is a generalized Bhaskar Rao design ${\rm GBRD}(m,m,\lambda|G|;G)$ \cite{colbourn-diffmatrices}.

A trivial $(G,2,\lambda)$ difference matrix exists for every group $G$ and every integer $\lambda \ge 1$, for example comprising a first row containing $\lambda |G|$ copies of the identity~$1_G$ and a second row containing each element of $G$ exactly $\lambda$ times.
The number of rows $m$ in a nontrivial $(G,m,\lambda)$ difference matrix therefore satisfies $m \ge 3$, and by the following counting result it also satisfies $m \le \lambda|G|$.
\begin{theorem}[Jungnickel~{\cite[Theorem~2.2]{jungnickel-diffmatrices}}] \label{thm:jungnickel-nonexistence}
Let $G$ be a group, and suppose there exists a $(G,m,\lambda)$ difference matrix. Then $m \le \lambda|G|$.
\end{theorem}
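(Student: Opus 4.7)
The plan is a double-counting argument that yields a quadratic inequality in $m$. First I would normalize the matrix as described earlier in the section, so that both row $0$ and column $0$ consist entirely of the identity $1_G$; write $n := |G|$. Then I would double-count the number $T$ of pairs consisting of a column index $j \ge 1$ together with an unordered pair $\{i,\ell\}$ of distinct rows for which $a_{ij} = a_{\ell j}$.

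Counting by row-pair first: fix any unordered pair $\{i,\ell\}$ of distinct rows. The difference matrix property forces $a_{ij}a_{\ell j}^{-1} = 1_G$ for exactly $\lambda$ values of $j \in \{0,1,\dots,\lambda n - 1\}$; column $j=0$ accounts for one of these by the normalization, so precisely $\lambda - 1$ of the remaining columns contribute. Summing over all $\binom{m}{2}$ pairs of rows gives $T = \binom{m}{2}(\lambda - 1)$.

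Counting by column: for each column $j$ and each $g \in G$, let $r_{j,g}$ denote the number of rows whose entry in column $j$ equals $g$, so that $\sum_{g} r_{j,g} = m$. The number of row-pairs agreeing in column $j$ is $\sum_{g}\binom{r_{j,g}}{2} = \tfrac12\bigl(\sum_g r_{j,g}^2 - m\bigr)$, and Cauchy--Schwarz gives $\sum_g r_{j,g}^2 \ge m^2/n$. Hence each column $j \ge 1$ contributes at least $m(m-n)/(2n)$ to $T$, and summing over the $\lambda n - 1$ such columns yields $T \ge (\lambda n - 1)\, m(m-n)/(2n)$.

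Combining the two expressions and multiplying through by $2n/m$ reduces the problem to showing that $n(m-1)(\lambda - 1) \ge (\lambda n - 1)(m - n)$ implies $m \le \lambda n$. Routine expansion shows that this inequality is equivalent to $(n-1)(\lambda n - m) \ge 0$, and since $G$ is nontrivial we have $n \ge 2$ and hence $m \le \lambda n$. The main subtlety is recognising that the normalization of column $0$ is exactly what replaces the per-pair agreement count of $\lambda$ by $\lambda - 1$; without this reduction the resulting inequality would be vacuous, and the Cauchy--Schwarz step would fail to deliver the desired bound (in particular, a direct application without normalization merely recovers a weaker Rao-type estimate).
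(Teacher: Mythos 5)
Your proposal is correct and is essentially the same argument as the paper's: both normalize the matrix, count coincidences of entries in the columns $j\ge 1$ exactly via the difference property (your $T=\binom{m}{2}(\lambda-1)$ is precisely the paper's identity $\sum_{j>0}\sum_g n_{gj}^2=(\lambda|G|-1)m+m(m-1)(\lambda-1)$), and bound the same quantity from below by Cauchy--Schwarz on the column distributions. The only difference is presentational (pair-counting versus sums of $n_{gj}$ and $n_{gj}^2$), and your closing observation about why the normalization of column $0$ is essential is accurate.
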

\begin{proof}
The following self-contained argument is adapted from the proof of a more general result given in \cite[Proposition~3.1]{jungnickel-diffmatrices}.
By assumption there exists a $(G,m,\lambda)$ difference matrix $A= (a_{ij})$, and we may assume that $A$ is in normalized form. 
For $g \in G$ and $0 \le j \le \lambda|G|-1$, let $n_{gj}$ be the number of times $g$ occurs in column $j$ of $A$, so that
\[
n_{gj}=\sum_{i=0}^{m-1} I[a_{ij}=g]
\]
where $I[X]$ is the indicator function of event~$X$. Then
\begin{align}
\sum_{j>0} \sum_{g \in G} n_{gj} 
 &= \sum_{j>0} \sum_i \sum_{g \in G} I[a_{ij}=g] \nonumber \\
 &= \sum_{j>0} \sum_i 1 \nonumber \\
 &= (\lambda|G|-1)m \label{eqn:ngj},
\end{align}
whereas
\begin{align*}
\sum_{j>0} \sum_{g \in G} n_{gj}^2
 &= \sum_{j>0} \sum_{g \in G} \sum_{i, \ell} I[a_{ij}=a_{\ell j} =g] \\
 &= \sum_{j>0} \sum_{g \in G} 
    \bigg ( \sum_i I[a_{ij}=g] + \sum_{i \ne \ell} I[a_{ij}=a_{\ell j} =g] \bigg ) \\
 &= \sum_{j>0} \sum_i \sum_{g \in G} I[a_{ij}=g] 
  + \sum_{i \ne \ell} \sum_{j>0} \sum_{g \in G} I[a_{ij}=a_{\ell j} =g] \\
 &= \sum_{j>0} \sum_i 1 + \sum_{i \ne \ell} \sum_{j>0} I[a_{ij}=a_{\ell j}] \\
 &= \sum_{j>0} \sum_i 1 + \sum_{i \ne \ell} (\lambda-1)
\end{align*}
because the normalization of $A$ gives $a_{i0} = a_{\ell 0}$ for all distinct $i$ and $\ell$. Therefore
\begin{equation} \label{eqn:ngj2}
\sum_{j>0} \sum_{g \in G} n_{gj}^2 = (\lambda|G|-1)m + m(m-1)(\lambda-1),
\end{equation}
and the result follows by substituting \eqref{eqn:ngj} and \eqref{eqn:ngj2} into the Cauchy-Schwarz inequality
\[
\Big( \sum_{j>0} \sum_{g \in G} n_{gj}\Big)^2 \le (\lambda|G|-1) |G| \sum_{j>0} \sum_{g \in G} n_{gj}^2
\]
and simplifying.
\end{proof}

If the upper bound $m = \lambda|G|$ in \cref{thm:jungnickel-nonexistence} is attained, then the resulting $(G,\lambda|G|,\lambda)$ difference matrix is a square matrix known as a \emph{generalized Hadamard matrix $\GH(|G|,\lambda)$ over $G$} (see \cite{delauney-hcd} for a survey).
In particular, a $\GH(2,2\lambda)$ over the group $(\{-1,1\},\cdot)$ is a \emph{Hadamard matrix} of order~$4\lambda$ (see \cite{horadam-book} or \cite{craigen-hcd}, for example, for background on this much-studied topic). In all known examples of a $\GH(|G|,\lambda)$ over $G$, the group order $|G|$ is a prime power and, if $G$ is not elementary abelian, then $|G|$ is a square \cite[p.~303]{delauney-hcd}. 

We shall be mostly concerned with $(G,m,\lambda)$ difference matrices for which $m < \lambda|G|$, and especially those with $\lambda = 1$ because of several connections to other combinatorial objects. In particular, a $(G,m,1)$ difference matrix is equivalent to a $G$-regular set of $m-1$ mutually orthogonal Latin squares of order $|G|$ \cite[Theorem~1]{jungnickel-latin}, and to a set of $m-2$ pairwise orthogonal orthomorphisms of~$G$ \cite[p.~195]{evans-jcd}. Moreover, a crucial ingredient in a recent construction of reduced linking systems of difference sets \cite{jedwab-li-simon-arxiv} is a $(G,m,1)$ difference matrix for certain $2$-groups $G$, as described in \cref{sec:red-link}.
We shall therefore pay special attention to $(G,m,1)$ difference matrices over $2$-groups. 
A further reason for regarding the case $\lambda=1$ as fundamental is that there are many methods for composing two difference matrices (the composition constructions of Theorems~\ref{thm:prod-jungnickel}, \ref{comp}, \ref{mm'}, and \ref{lambda+mu}), and for constructing a new difference matrix from another (the homomorphism construction of \cref{group-hom}), under all of which the value of $\lambda$ increases or remains the same; in particular, we can use \cref{lambda+mu} to produce a $(G,m,\lambda)$ difference matrix for each $\lambda > 1$ from a $(G,m,1)$ matrix.

The following nonexistence result rules out, as a special case, the existence of a $(G,3,1)$ difference matrix when $G$ is a cyclic $2$-group.
Indeed, we shall see (for example in \cref{thm:rec} and from \cref{small-2-grp-info}) that the currently known existence pattern for $(G,m,1)$ difference matrices over a $2$-group~$G$ of fixed order and for fixed $m$ appears to favor groups of smaller exponent and larger rank.

\begin{theorem}[Hall and Paige~{\cite[Theorem~5]{hall-paige}}, Drake~{\cite[Theorem~1.10]{drake}}] \label{thm:drake-nonexistence}
Let $G$ be a group containing a nontrivial cyclic Sylow $2$-subgroup, and let $\lambda$ be odd. Then there does not exist a $(G,3,\lambda)$ difference matrix.
\end{theorem}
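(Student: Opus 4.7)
The approach is to reduce, via a homomorphism, to the case of a cyclic $2$-group and then derive a parity contradiction. Suppose for contradiction that a $(G,3,\lambda)$ difference matrix $A$ exists with $\lambda$ odd. The first ingredient is the classical consequence of Burnside's transfer theorem that a finite group with cyclic Sylow $2$-subgroup $P$ of order $2^a$ has a normal $2$-complement: a normal subgroup $N$ of odd order $|G|/2^a$ with $G/N \cong P$. Fix the resulting epimorphism $\pi \colon G \to \Z_{2^a}$.

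Apply $\pi$ entrywise to $A$ to produce a $3 \times \lambda|G|$ matrix $B$ over $\Z_{2^a}$. Since every element of $\Z_{2^a}$ has exactly $|N|$ preimages in $G$, each pairwise row difference of $B$ contains every element of $\Z_{2^a}$ exactly $\lambda' := \lambda|N|$ times, so $B$ is a $(\Z_{2^a}, 3, \lambda')$ difference matrix. Crucially, $\lambda'$ is odd, being the product of two odd numbers.

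It therefore suffices to show that no $(\Z_{2^a},3,\lambda')$ difference matrix exists when $\lambda'$ is odd. Normalize $B$ so that row~$0$ is identically $0$; then rows~$1$ and~$2$, as well as their pointwise difference (row~$2$ minus row~$1$), each contain every element of $\Z_{2^a}$ exactly $\lambda'$ times. Summing the entries of any such row yields
\[
\lambda' \sum_{g \in \Z_{2^a}} g \;=\; \lambda' \cdot 2^{a-1}(2^a - 1) \;\equiv\; \lambda' \cdot 2^{a-1} \pmod{2^a}.
\]
On the other hand, the sum of the entries of the difference row equals the difference of the two individual row sums, and hence is $0 \pmod{2^a}$. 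Equating gives $\lambda' \cdot 2^{a-1} \equiv 0 \pmod{2^a}$, i.e.\ $\lambda'$ is even---a contradiction.

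The only substantive obstacle is the opening group-theoretic step, establishing the existence of the epimorphism $\pi \colon G \to \Z_{2^a}$. Once that reduction is in hand the counting argument is elementary, and the $\lambda = 1$ Hall--Paige statement is recovered as a special case.
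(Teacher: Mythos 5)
The paper states \cref{thm:drake-nonexistence} without proof, citing Hall--Paige and Drake, so there is no in-paper argument to compare against; your proof is correct and is essentially the standard one from those sources. The reduction via Burnside's normal $2$-complement theorem is valid (since a cyclic Sylow $2$-subgroup $P$ is abelian, $N_G(P)/C_G(P)$ has odd order yet embeds in the $2$-group $\mathrm{Aut}(P)$, hence is trivial, so Burnside applies), the passage to a $(\Z_{2^a},3,\lambda|N|)$ difference matrix is exactly the homomorphism construction of \cref{group-hom}, and the concluding computation --- that each of rows $1$, $2$, and their pointwise difference sums to $\lambda' 2^{a-1} \pmod{2^a}$ while the difference row must also sum to $0$, forcing $\lambda'$ even --- is airtight because $a \ge 1$ by the nontriviality hypothesis. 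You might note explicitly that the normalization (subtracting $b_{0j}$ from column $j$) preserves all pairwise row differences, which is why rows $1$ and $2$ themselves, and not just their differences with row $0$, contain each element $\lambda'$ times; but this is the same normalization discussed in \cref{sec:basic}.
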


\section{Constructions for difference matrices}
\label{sec:diff-matrices}

In this section we describe some of the principal constructive results for difference matrices, especially as they relate to the case $\lambda =1$.
We sometimes omit proofs, or else describe constructions without proving they satisfy the required poperties.

\subsection{Finite field construction (Drake)}
\label{sec:finite-field}

The following construction, based on properties of a finite field, is a foundational example that shows the upper bound of \cref{thm:jungnickel-nonexistence} can be attained for every elementary abelian group.

\begin{proposition}[Drake~{\cite[Proposition~1.5]{drake}}]
  \label{elem-ab-1}
  Let $p$ be prime and let $n$ be a positive integer. Then the additive form of a multiplication table for $\GF(p^n)$ is a $(\Z_{p}^{n}, p^n, 1)$ difference matrix.
\end{proposition}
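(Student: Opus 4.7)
The plan is to exploit the fact that $\GF(p^n)$ is simultaneously a field (for the table values) and an $n$-dimensional vector space over $\Z_p$, so that its additive group is isomorphic to $\Z_p^n$. First I would set up notation: identify the additive group of $\GF(p^n)$ with $\Z_p^n$, label both the rows and the columns of the matrix $A$ by the elements of $\GF(p^n)$, and define the entry in row $a$ and column $b$ to be the field product $ab$. This gives an $m \times \lambda|G|$ array with $m = p^n$ and $\lambda|G| = 1 \cdot p^n = p^n$, matching the dimensions required for a $(\Z_p^n, p^n, 1)$ difference matrix.

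Next I would verify the difference property. Fix two distinct rows indexed by $a_1, a_2 \in \GF(p^n)$. The multiset of differences along these rows is
\[
\{a_1 b - a_2 b : b \in \GF(p^n)\} = \{(a_1 - a_2)b : b \in \GF(p^n)\},
\]
where subtraction is carried out in the additive group $\Z_p^n$ but factoring uses the distributive law in $\GF(p^n)$ (which is compatible because the additive operations coincide). Since $a_1 \ne a_2$, the element $a_1 - a_2$ is a nonzero element of the field and therefore a unit, so the map $b \mapsto (a_1 - a_2)b$ is a bijection on $\GF(p^n)$. Hence the multiset above contains each element of $\GF(p^n) \cong \Z_p^n$ exactly once, which is precisely the $\lambda = 1$ condition.

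There is essentially no obstacle here; the only point demanding care is a conceptual one, namely keeping straight the two roles played by $\GF(p^n)$ (as a field, to produce the table entries and to guarantee the cancellation by a unit, and as an additive group, to interpret the entries as elements of $\Z_p^n$ and define the ``differences''). Once that identification is made explicit, the argument reduces to the single observation that multiplication by any nonzero field element permutes $\GF(p^n)$. Note also that this construction attains the bound of \cref{thm:jungnickel-nonexistence}, so the resulting matrix is in fact a generalized Hadamard matrix $\GH(p^n, 1)$ over $\Z_p^n$, consistent with the remarks following that theorem.
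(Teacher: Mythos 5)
Your proof is correct: identifying the additive group of $\GF(p^n)$ with $\Z_p^n$ and observing that the row differences factor as $(a_1-a_2)b$ with $a_1-a_2$ a nonzero field element (hence a unit, so $b \mapsto (a_1-a_2)b$ permutes the field) is exactly the standard argument. The paper itself omits the proof of this proposition, deferring to Drake, so there is nothing to contrast with; your write-up fills that gap with the expected reasoning.
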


\begin{example}
  \label{ex:Z22}
  We use \cref{elem-ab-1} to construct a $(\Z_{2}^2, 4, 1)$ difference matrix. Let $\alpha$ be a root of the primitive polynomial $f(x)=x^2+x+1$ in $\Z_2[x]$, and construct $\GF(2^2)$ as $\Z_2[x]/\langle f(x) \rangle$. 
The additive group of $\GF(2^2)$ is $\Z_2^2$, and the multiplication table of $\GF(2^{2})$ written in additive notation gives the \mbox{$(\Z_2^2, 4, 1)$} difference matrix
  \[
    \kbordermatrix{
 \cdot	  & 0   & 1   & \alpha & \alpha^2 \\
 0        & 00 & 00 & 00 & 00 \\
 1        & 00 & 01 & 10 & 11 \\
 \alpha   & 00 & 10 & 11 & 01 \\
 \alpha^2 & 00 & 11 & 01 & 10 \\
    }.
  \]
\end{example}

\begin{example}
  \label{ex:Z23}
  We use \cref{elem-ab-1} to construct a $(\Z_{2}^3, 8, 1)$ difference matrix. Let $\alpha$ be a root of the primitive polynomial $f(x)=x^3+x+1$ in $\Z_2[x]$, and construct $\GF(2^3)$ as $\Z_2[x]/\langle f(x) \rangle$. 
The additive group of $\GF(2^3)$ is $\Z_2^3$, and the multiplication table of $\GF(2^{3})$ written in additive notation gives the \mbox{$(\Z_2^3, 8, 1)$} difference matrix
  \begin{equation*}
    \DrawBox[fill=thm2]{pic cs:l1}{pic cs:r1}
    \kbordermatrix{
 \cdot	  & 0   & 1   & \alpha & \alpha^2 & \alpha^3 & \alpha^4 & \alpha^5 & \alpha^6 \\
 0        & 000 & 000 & 000 & 000 & 000 & 000 & 000 & 000 \\
 1        & 000 & \tm{l1}001 & 010 & 100 & 011 & 110 & 111 & 101 \\
 \alpha   & 000 & 010 & 100 & 011 & 110 & 111 & 101 & 001 \\
 \alpha^2 & 000 & 100 & 011 & 110\tm{r1} & 111 & 101 & 001 & 010 \\
 \alpha^3 & 000 & 011 & 110 & 111 & 101 & 001 & 010 & 100 \\
 \alpha^4 & 000 & 110 & 111 & 101 & 001 & 010 & 100 & 011 \\
 \alpha^5 & 000 & 111 & 101 & 001 & 010 & 100 & 011 & 110 \\
 \alpha^6 & 000 & 101 & 001 & 010 & 100 & 011 & 110 & 111 \\
    }.
  \end{equation*}
(We shall refer to the shaded entries of this multiplication table in Example~\ref{ex:contr-field}.)
\end{example}

The next result extends the construction of \cref{elem-ab-1} to give examples with $\lambda > 1$.

\begin{lemma}[{\cite[Proposition~1.8]{drake}}, {\cite[Proposition~4.4]{jungnickel-diffmatrices}}]
  \label{group-hom}
  Let $G$ and $H$ be groups. Suppose that $\phi \colon G \to H$ is a surjective homomorphism and that $A = (a_{ij})$ is a $(G, m, \lambda)$ difference matrix. Then $\phi(A) = (\phi(a_{ij}))$ is an $(H, m, \lambda |\Ker \phi|)$ difference matrix.
\end{lemma}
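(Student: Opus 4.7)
The plan is to verify directly the two defining properties of a difference matrix for $\phi(A)$: that it has the right dimensions, and that for each pair of distinct rows the multiset of differences covers $H$ with the stated multiplicity $\lambda|\Ker\phi|$.

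First I would check dimensions. By the first isomorphism theorem, surjectivity of $\phi$ gives $|G| = |H|\cdot|\Ker\phi|$, so the $m \times \lambda|G|$ shape of $A$ is the same as the $m \times (\lambda|\Ker\phi|)\cdot|H|$ shape required of an $(H, m, \lambda|\Ker\phi|)$ difference matrix. Thus the column count is automatically correct, and nothing else about the dimensions needs checking.

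Next I would handle the difference property. Fix two distinct rows $i$ and $\ell$. Since $\phi$ is a homomorphism,
\[
\phi(a_{ij})\phi(a_{\ell j})^{-1} = \phi(a_{ij}a_{\ell j}^{-1})
\quad\text{for every } j.
\]
Fix any $h \in H$. Then $\phi(a_{ij}a_{\ell j}^{-1}) = h$ if and only if $a_{ij}a_{\ell j}^{-1} \in \phi^{-1}(h)$. Because $\phi$ is surjective, $\phi^{-1}(h)$ is a single coset of $\Ker\phi$, hence has exactly $|\Ker\phi|$ elements. By the difference matrix property applied to rows $i$ and $\ell$ of $A$, each of these $|\Ker\phi|$ elements of $G$ appears exactly $\lambda$ times among $\{a_{ij}a_{\ell j}^{-1} : 0 \le j \le \lambda|G|-1\}$. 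Summing over the coset gives that $h$ appears exactly $\lambda|\Ker\phi|$ times among the differences of rows $i$ and $\ell$ of $\phi(A)$, as required.

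There is no real obstacle here; the proof is essentially a bookkeeping exercise combining the fact that fibers of a surjective homomorphism are cosets of the kernel with the linearity of $\phi$ on differences. The only thing to be careful about is remembering that $\phi^{-1}(h)$ must be viewed as a multiset in the counting step (each of its elements contributing $\lambda$ occurrences), and that the relation $|G| = |H|\cdot|\Ker\phi|$ is what makes the parameter $\lambda|\Ker\phi|$ consistent with the total column count.
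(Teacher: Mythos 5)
Your proof is correct and follows essentially the same route as the paper, which simply invokes the First Isomorphism Theorem to say that the differences of two distinct rows of $\phi(A)$ cover each element of $H$ exactly $\lambda|\Ker\phi|$ times; you have merely spelled out the fiber-counting argument (each fiber $\phi^{-1}(h)$ is a coset of $\Ker\phi$ with $|\Ker\phi|$ elements, each appearing $\lambda$ times among the differences) that the paper leaves implicit. The dimension check is a correct, if routine, addition.
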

\begin{proof}
The difference of two distinct rows of~$A$ contains each element of $G$ exactly $\lambda$ times, so by the First Isomorphism Theorem the difference of two distinct rows of $\phi(A)$ contains each element of $H$ exactly $\lambda |\Ker \phi|$ times.
\end{proof}

\begin{corollary}[{\cite[Corollary~1.9]{drake}}]
  \label{elem-ab}
  Let $p$ be prime, and let $m, n \geq 1$ and $s \geq 0$ be integers.
Then there exists a $(\Z_p^n, m, p^s)$ difference matrix if and only if $m \le p^{n+s}$.
\end{corollary}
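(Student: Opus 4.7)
The statement is an ``if and only if'' with a clear direction of attack for each side.

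For the ``only if'' direction, I would invoke \cref{thm:jungnickel-nonexistence} directly: since $|\Z_p^n| = p^n$ and $\lambda = p^s$, the existence of a $(\Z_p^n, m, p^s)$ difference matrix forces $m \le \lambda |G| = p^{n+s}$.

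For the ``if'' direction, the plan is to produce a $(\Z_p^n, p^{n+s}, p^s)$ difference matrix and then delete rows. First, apply \cref{elem-ab-1} with the elementary abelian group $\Z_p^{n+s}$ to obtain a $(\Z_p^{n+s}, p^{n+s}, 1)$ difference matrix. Next, choose any surjective homomorphism $\phi \colon \Z_p^{n+s} \to \Z_p^n$ (for instance, projection onto the first $n$ coordinates), whose kernel has order $p^s$. Applying \cref{group-hom} produces a $(\Z_p^n, p^{n+s}, p^s)$ difference matrix. Finally, given any $m$ with $3 \le m \le p^{n+s}$, repeatedly delete rows (using the row-deletion observation from \cref{sec:basic}) to obtain a $(\Z_p^n, m, p^s)$ difference matrix. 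The boundary cases $m = 1, 2$ are handled separately: $m=2$ is the trivial difference matrix mentioned before \cref{thm:jungnickel-nonexistence}, and $m=1$ is immediate (a single row trivially has no pair of distinct rows to check).

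None of the steps presents a real obstacle: both directions are essentially direct applications of results already established in the excerpt. The only mild care needed is to ensure the construction works for the full range $1 \le m \le p^{n+s}$, which is why the trivial small cases should be acknowledged rather than glossed over.
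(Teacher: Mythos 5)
Your proposal is correct and follows essentially the same route as the paper: necessity from \cref{thm:jungnickel-nonexistence}, and sufficiency by applying \cref{group-hom} to the $(\Z_p^{n+s},p^{n+s},1)$ difference matrix from \cref{elem-ab-1} and then deleting rows. The separate treatment of $m=1,2$ is harmless but unnecessary, since the row-deletion observation in \cref{sec:basic} already covers all $m \le p^{n+s}$.
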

\begin{proof}
The condition $m \le p^{n+s}$ is necessary, by \cref{thm:jungnickel-nonexistence}.
To show existence for $m < p^{n+s}$, delete $p^{n+s}-m$ of the rows of the difference matrix for $m=p^{n+s}$. 
It remains to construct a $(\Z_p^n,p^{n+s},p^s)$ difference matrix.
By \cref{elem-ab-1}, there exists a $(\Z_p^{n+s},p^{n+s},1)$ difference matrix. Apply \cref{group-hom} using a surjective homomorphism $\phi: \Z_p^{n+s} \to \Z_p^n$.
\end{proof}

\begin{example}
  We use \cref{group-hom} to construct a $(\Z_2^2, 8, 2)$ difference matrix from the $(\Z_2^3, 8, 1)$ difference matrix of \cref{ex:Z23}. Apply the canonical homomorphism $\Z_2^3 \to \Z_2^2$ to remove the third component of each element of $\Z_2^3$, giving the $(\Z_2^2, 8, 2)$ difference matrix
  \begin{equation*}
    \begin{bmatrix}
      00 & 00 & 00 & 00 & 00 & 00 & 00 & 00 \\
      00 & 00 & 01 & 10 & 01 & 11 & 11 & 10 \\
      00 & 01 & 10 & 01 & 11 & 11 & 10 & 00 \\
      00 & 10 & 01 & 11 & 11 & 10 & 00 & 01 \\
      00 & 01 & 11 & 11 & 10 & 00 & 01 & 10 \\
      00 & 11 & 11 & 10 & 00 & 01 & 10 & 01 \\
      00 & 11 & 10 & 00 & 01 & 10 & 01 & 11 \\
      00 & 10 & 00 & 01 & 10 & 01 & 11 & 11 \\
    \end{bmatrix}.
  \end{equation*}
\end{example}

\subsection{Composition construction (Buratti)}
\label{sec:composition}

The following composition construction combines difference matrices in groups $H$ and $K$ to produce a difference matrix in~$H \times K$. 
We can use this composition to combine difference matrices in groups of prime power order, including those of \cref{sec:abelian-noncyclic}, giving a rich existence pattern. 

\begin{theorem}[Jungnickel~{\cite[Proposition~4.5]{jungnickel-diffmatrices}}] \label{thm:prod-jungnickel}
  Let $H$ and $K$ be groups. Suppose there exists an $(H,m,\lambda)$ difference matrix and a $(K,m,\mu)$ difference matrix. Then there exists a $(H \times K,m,\lambda \mu)$ difference matrix.
\end{theorem}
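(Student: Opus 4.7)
The plan is to form a Kronecker-type product of the two given difference matrices and verify the difference property componentwise. Let $A = (a_{ij})$ be the given $(H,m,\lambda)$ difference matrix, indexed by $0 \le i \le m-1$ and $0 \le j \le \lambda|H|-1$, and let $B = (b_{ik})$ be the given $(K,m,\mu)$ difference matrix, indexed by $0 \le i \le m-1$ and $0 \le k \le \mu|K|-1$. The key observation is that both matrices share the same number of rows $m$, so we can couple them row by row.

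I would then define a matrix $C$ with $m$ rows and $\lambda\mu|H||K| = \lambda\mu|H \times K|$ columns, whose columns are indexed by pairs $(j,k)$ in lexicographic order, and whose entry in row $i$ and column $(j,k)$ is the pair $(a_{ij}, b_{ik}) \in H \times K$. Verifying the difference property reduces to showing that for each fixed $(h,k) \in H \times K$ and each pair of distinct rows $i,\ell$, the number of columns $(j,k')$ with
\[
(a_{ij}a_{\ell j}^{-1},\, b_{ik'} b_{\ell k'}^{-1}) = (h, k)
\]
equals $\lambda\mu$. But this factors: the first coordinate condition $a_{ij}a_{\ell j}^{-1} = h$ is satisfied by exactly $\lambda$ values of $j$ (by the difference property of $A$), and independently the second coordinate condition is satisfied by exactly $\mu$ values of $k'$ (by the difference property of $B$). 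Multiplying gives the required count $\lambda\mu$.

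There is no real obstacle here; the argument is essentially the observation that the Cartesian product of the two multisets of differences, taken in $H \times K$, is just the Cartesian product of the individual multisets. The only minor care needed is in making sure the column index set of $C$ is set up so that each pair $(j,k)$ appears exactly once, which gives the correct total of $\lambda|H| \cdot \mu|K|$ columns, matching $\lambda\mu|H \times K|$ as required by the definition of a difference matrix.
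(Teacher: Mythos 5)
Your proof is correct and is essentially the construction the paper has in mind: the paper omits a direct proof and instead observes that this theorem is the case $G = H \times K$ of the Kronecker-product composition in \cref{comp}, and your columnwise product $(a_{ij}, b_{ik})$ with the factored counting argument is exactly what that specialization unwinds to. (Only a cosmetic point: you reuse $k$ both as a column index of $B$ and as an element of $K$; the switch to $k'$ saves you, but a cleaner choice of symbols would help.)
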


\cref{thm:prod-jungnickel} occurs as the case $G = H \times K$ of the following more general composition construction, which combines difference matrices in groups $H$ and $G/H$ to produce a difference matrix in~$G$.

\begin{theorem}[Buratti~{\cite[Theorem~2.5~and~Corollary~2.6]{buratti}}]
  \label{comp}
  Let $G$ be a group containing a normal subgroup~$H$. Suppose that $A$ is an $(H, m, \lambda)$ difference matrix and that $(b_{ij}H)$ is a $(G/H, m, \mu)$ difference matrix, and let $B = (b_{ij})$. Then the matrix each of whose rows is the Kronecker product of the corresponding rows in $A$ and $B$ is a $(G, m, \lambda \mu)$ difference matrix.
\end{theorem}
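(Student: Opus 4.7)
The plan is to label the columns of the proposed difference matrix $C$ by pairs $(j,k)$ with $0 \le j \le \lambda|H|-1$ and $0 \le k \le \mu|G/H|-1$, giving the correct total of $\lambda\mu|G|$ columns, and to set $c_{i,(j,k)} = b_{ik} a_{ij}$, which specifies a Kronecker-product convention that cooperates with the possible noncommutativity of $G$. The central computation for two distinct rows $i$ and $\ell$ is
\[
  c_{i,(j,k)}\, c_{\ell,(j,k)}^{-1} \;=\; b_{ik}\,(a_{ij} a_{\ell j}^{-1})\, b_{\ell k}^{-1} \;=\; \bigl(b_{ik}\,(a_{ij} a_{\ell j}^{-1})\, b_{ik}^{-1}\bigr) \cdot (b_{ik} b_{\ell k}^{-1}),
\]
and the key remark is that, since $H$ is normal in $G$, conjugation by $b_{ik}$ restricts to an automorphism of~$H$.

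I would verify the difference property in two stages. First, fix $k$ and let $j$ vary. By the difference property of $A$, the quantity $a_{ij} a_{\ell j}^{-1}$ runs over $H$ with each element appearing exactly $\lambda$ times; applying the bijection $h \mapsto b_{ik} h b_{ik}^{-1}$ on $H$ preserves these multiplicities, and then right-multiplication by the fixed element $b_{ik} b_{\ell k}^{-1}$ shows that the differences $c_{i,(j,k)}\, c_{\ell,(j,k)}^{-1}$ fill out the coset $b_{ik} b_{\ell k}^{-1} H$ exactly $\lambda$ times. Second, let $k$ vary: the difference property of the quotient matrix $(b_{ij}H)$ asserts that the cosets $b_{ik} b_{\ell k}^{-1} H$ realize each coset of $H$ in $G$ exactly $\mu$ times, so summing the per-$k$ counts yields each element of $G$ exactly $\lambda\mu$ times, as required.

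The main obstacle is purely organizational: choosing the order of multiplication in the Kronecker product so that the $H$-part of the difference (controlled by $A$) separates cleanly from the coset-part (controlled by the quotient matrix $(b_{ij}H)$). With the order taken above, the identity $b_{ik} h b_{\ell k}^{-1} = (b_{ik} h b_{ik}^{-1}) \cdot (b_{ik} b_{\ell k}^{-1})$ provided by normality becomes exactly the right bookkeeping, and the two-stage count goes through without further difficulty; in the abelian case treated in most of the paper the order is of course immaterial.
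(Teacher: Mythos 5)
Your proof is correct. The paper itself gives no proof of this theorem (it is quoted from Buratti, and Section~3 explicitly announces that some proofs are omitted), so there is nothing to compare against; your two-stage argument --- conjugation by $b_{ik}$ fixes $H$ setwise and preserves the multiplicity-$\lambda$ covering coming from $A$, and the quotient matrix then distributes the cosets $b_{ik}b_{\ell k}^{-1}H$ with multiplicity $\mu$ --- is the natural one and is complete. Your attention to the order of multiplication in the Kronecker product is well placed in the nonabelian setting (the opposite convention $a_{ij}b_{ik}$ would introduce a $j$-dependent conjugate that spoils the uniform count), and the one tiny slip --- your stage-one differences actually fill the right coset $H\,b_{ik}b_{\ell k}^{-1}$ rather than the left coset as written --- is harmless since normality makes the two coincide.
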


\begin{example}
  We use \cref{comp} to construct a $(G, 4, 1)$ difference matrix for 
$G = \Z_{4} \times \Z_{2} \times \Z_{2}$, using additive notation. Let $H = \langle 010, 200 \rangle$, so that $G / H = \langle 001+H, 100+H \rangle$ and both $H$ and $G / H$ are isomorphic to $\Z_{2} \times \Z_{2}$. The $(\Z_2^2,2,1)$ difference matrix of \cref{ex:Z22} gives the $(H, 4, 1)$ difference matrix
  \begin{equation*}
    A =
    \begin{bmatrix}
      000 & 000 & 000 & 000 \\
      000 & 010 & 200 & 210 \\
      000 & 200 & 210 & 010 \\
      000 & 210 & 010 & 200
    \end{bmatrix}
  \end{equation*}
  and the $(G / H, 4, 1)$ difference matrix $(b_{ij} + H)$, where
  \begin{equation*}
    B = (b_{ij}) =
    \begin{bmatrix}
      000 & 000 & 000 & 000 \\
      000 & 001 & 100 & 101 \\
      000 & 100 & 101 & 001 \\
      000 & 101 & 001 & 100
    \end{bmatrix}.
  \end{equation*}
  By \cref{comp}, the Kronecker product of corresponding rows in $A$ and $B$ then gives the rows of the $(G, 4, 1)$ difference matrix 
  \begin{equation*}
    \begin{bmatrix}
      (000 + 000) & (000 + 000) & (000 + 000) & (000 + 000) & (000 + 000) & (000 + 000) & \cdots \\
      (000 + 000) & (000 + 001) & (000 + 100) & (000 + 101) & (010 + 000) & (010 + 001) & \cdots \\
      (000 + 000) & (000 + 100) & (000 + 101) & (000 + 001) & (200 + 000) & (200 + 100) & \cdots \\
      (000 + 000) & (000 + 101) & (000 + 001) & (000 + 100) & (210 + 000) & (210 + 101) & \cdots
    \end{bmatrix}
  \end{equation*}
  \begin{equation*}
    =
    \setlength{\arraycolsep}{2.8pt}
    \begin{bmatrix}
      000 & 000 & 000 & 000 & 000 & 000 & 000 & 000 & 000 & 000 & 000 & 000 & 000 & 000 & 000 & 000 \\
      000 & 001 & 100 & 101 & 010 & 011 & 110 & 111 & 200 & 201 & 300 & 301 & 210 & 211 & 310 & 311 \\
      000 & 100 & 101 & 001 & 200 & 300 & 301 & 201 & 210 & 310 & 311 & 211 & 010 & 110 & 111 & 011 \\
      000 & 101 & 001 & 100 & 210 & 311 & 211 & 310 & 010 & 111 & 011 & 110 & 200 & 301 & 201 & 300
    \end{bmatrix}.
  \end{equation*}
\end{example}

Repeated application of the composition construction of \cref{comp} to difference matrices over elementary abelian groups, as given by~\cref{elem-ab-1}, produces examples over a larger set of groups. 

\begin{example}\label{ex:2chains}
  We use \cref{comp} to construct $(G,4,1)$ difference matrix for $G = \Z_{8} \times \Z_{8} \times \Z_{4} \times \Z_{2}$. Form a chain of subgroups
\[
G \supset G_1 \supset G_2,
\]
where $G_1 \cong \Z_4 \times \Z_4 \times \Z_2$ and $G_2 \cong \Z_2 \times \Z_2$ such that $G/G_1 \cong \Z_2^4$ and $G_1/G_2 \cong \Z_2^3$.
By \cref{elem-ab-1}, there is a $(G/G_1,2^4,1)$ and a $(G_1/G_2,2^3,1)$ and a $(G_2,2^2,1)$ difference matrix. 
Use \cref{comp} to combine the first $2^2$ rows of the $(G_2,2^2,1)$ and $(G_1/G_2,2^3,1)$ difference matrices to give a $(G_1,2^2,1)$ difference matrix; then combine this with the first $2^2$ rows of the $(G/G_1,2^4,1)$ difference matrix to give a $(G,2^2,1)$ difference matrix. The number of rows in this difference matrix is $\min(2^4,2^3,2^2) = 2^2$.

However, by using a different chain of subgroups we can instead obtain a $(G,8,1)$ difference matrix: choose
$G'_1 \cong \Z_4 \times \Z_4 \times \Z_2 \times \Z_2$ and $G'_2 \cong \Z_2 \times \Z_2 \times \Z_2$ such that $G/G'_1 \cong \Z_2^3$ and $G'_1/G'_2 \cong \Z_2^3$.
Since each of $G'_2$, $G'_1/G'_2$, and $G/G'_1$ is isomorphic to $\Z_2^3$, combination under \cref{comp} produces a $(G,2^3,1)$ difference matrix. The number of rows in this difference matrix is $\min(2^3,2^3,2^3) =2^3$.
\end{example}

\cref{ex:2chains} shows that when \cref{elem-ab-1} and \cref{comp} are used to produce a difference matrix by choosing a chain of subgroups, some choices can result in a larger number of rows for the final difference matrix than others. 
\cref{prop:buratti-chain} shows how to choose a chain of subgroups that will produce the largest number of rows in the final difference matrix, and \cref{thm:rec} gives the result of making this choice.

\begin{proposition}[Buratti~{\cite[Lemma~2.10]{buratti}}]\label{prop:buratti-chain}
Let $p$ be prime, and let $G$ be an abelian group of order~$p^n$ and exponent $p^e$. Then $\floor{n/e}$ is the largest integer $s$ for which there is a chain of subgroups $G_i$ of $G$ satisfying
\begin{equation*}
G = G_0 \supset G_1 \supset \dots \supset G_r 
\end{equation*}
for some integer $r \ge 0$, such that $G_r$ and each of the quotient groups $G_{i-1}/G_i$ is elementary abelian and of order at least~$p^s$. 
The upper bound $\floor{n/e}$ is attained when each of the $\floor{n/e}$ largest direct factors of $G_{i-1}$ is reduced by a factor of $p$ in $G_i$ and when $G_r$ is the first resulting subgroup that is elementary abelian, and in this case we have
\[
\mbox{$G_{i-1}/G_i \cong \Z_p^{\floor{n/e}}$ for each $i$ satisfying $1 \le i \le r$, and $G_{r} \cong \Z_p^\ell$ for some integer $\ell \ge \floor{n/e}$.}
\]
\end{proposition}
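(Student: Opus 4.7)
My plan is to prove the statement in two halves: the upper bound $s \le \lfloor n/e \rfloor$, then the construction of a chain attaining equality.

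For the upper bound, I would exploit the two constraints imposed by the chain. Since each $G_{i-1}/G_i$ is elementary abelian we have $pG_{i-1} \subseteq G_i$, and iterating from $G_0 = G$ gives $p^r G \subseteq G_r$; combined with $pG_r = 0$ (because $G_r$ is itself elementary abelian), this yields $p^{r+1}G = 0$, so $e \le r+1$. Multiplicatively,
\[
p^n = |G| = |G_r| \prod_{i=1}^r |G_{i-1}/G_i| \ge p^s \cdot (p^s)^r = p^{s(r+1)} \ge p^{se},
\]
whence $n \ge se$ and $s \le \lfloor n/e \rfloor$.

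For the construction, I would fix a cyclic decomposition $G = \Z_{p^{e_1}} \times \cdots \times \Z_{p^{e_k}}$ with $e_1 \ge \cdots \ge e_k \ge 1$, so $e_1 = e$ and $\sum e_i = n$. Setting $t = \lfloor n/e \rfloor$, we have $k \ge \lceil n/e \rceil \ge t$ since each $e_i \le e$. Define $G_0 = G$ and inductively build $G_i$ by multiplying the $t$ largest cyclic direct summands of $G_{i-1}$ by $p$ (breaking ties arbitrarily), stopping at the first index $r$ for which $G_r$ is elementary abelian. Provided each of the $t$ reduced summands is still nontrivial, each quotient $G_{i-1}/G_i$ is $\cong \Z_p^t$ by construction, since multiplication by $p$ contributes a factor $\Z_p$ from each reduced summand.

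The main obstacle is verifying that $G_r$ is elementary abelian of rank $\ell \ge t$. I would encode the cyclic summand structure of $G_i$ as the integer partition $\lambda^{(i)} = (\lambda^{(i)}_1 \ge \lambda^{(i)}_2 \ge \cdots)$ of the summand exponents, so the greedy rule becomes ``decrement the top $t$ parts, then re-sort,'' terminating when $\lambda^{(i)}_1 \le 1$. A step-by-step accounting gives $\sum_j \lambda^{(r)}_j = n - rt$, which equals $\ell$ (the number of $\Z_p$-summands of $G_r$), so the desired inequality $\ell \ge t$ is equivalent to $(r+1)t \le n$. The delicate core of the proof is a case analysis on the evolution of $\lambda^{(i)}$: in steps where the top $t$ parts all exceed $1$ the number of nonzero parts is preserved, whereas in steps where the top $t$ includes parts equal to $1$ some summands are trivialized. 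One must show, using the sum relation $\sum_j \lambda^{(i-1)}_j = n - (i-1)t$ together with the inequality $et \le n$ furnished by the definition of $t$, that when such trivialization forces termination, at least $t$ nonzero parts survive. This rank-preservation argument, and specifying how ties among equal exponents should be broken so that the argument goes through cleanly, is the place where I would expect the proof to require the most care.
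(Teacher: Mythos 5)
Your first half is fine: $pG_{i-1}\subseteq G_i$ gives $p^{r+1}G=0$, hence $r+1\ge e$, and combined with $|G|\ge p^{s(r+1)}$ this yields $s\le\floor{n/e}$. (The paper gives no proof of this proposition, citing Buratti, so there is nothing in-paper to compare against.) The real problem is in the second half, and it is not merely the point of ``care'' you flagged: the inequality $\ell=n-rt\ge t$ to which your accounting correctly reduces the construction is \emph{false} for the greedy rule as described. Take $G=\Z_{p^3}\times\Z_{p^3}\times\Z_{p^3}\times\Z_{p^2}$, so $n=11$, $e=3$, $t=\floor{n/e}=3$. The partitions evolve as $(3,3,3,2)\to(2,2,2,2)\to(2,1,1,1)\to(1,1)$, so $r=3$ and $G_r\cong\Z_p^2$ has order $p^2<p^t$; no tie-breaking convention changes this, since $(2,2,2,2)$ has all parts equal. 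Worse, for this group the displayed conclusion is unattainable by \emph{any} chain: if every quotient were $\cong\Z_p^3$ and $G_r\cong\Z_p^\ell$, then $3r+\ell=11$ forces $\ell\equiv 2\pmod 3$, while $G_r$ is an elementary abelian subgroup of $G$ and hence $\ell\le\mathrm{rank}(G)=4$, leaving no value $\ell\ge 3$. So the rank-preservation argument you anticipate cannot be made to work, because the statement it targets is not true in general.

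What is true, and all that \cref{thm:rec} actually needs, is the weaker assertion that some chain exists in which $G_r$ and every quotient are elementary abelian of order \emph{at least} $p^{\floor{n/e}}$; ranks larger than $\floor{n/e}$ must be permitted (for the group above, $G\supset pG\supset p^2G$ works, with quotient ranks $4,4$ and $p^2G\cong\Z_p^3$). A proof that does go through is induction on $e$: for $e\ge 2$ choose $G_1$ with $pG\subseteq G_1\subseteq G[p^{e-1}]:=\{x\in G: p^{e-1}x=0\}$ and $t(e-1)\le\log_p|G_1|\le n-t$. Such a $G_1$ exists because $\log_p|pG|=n-\mathrm{rank}(G)\le n-t$, while $\log_p|G[p^{e-1}]|=n-a$ for $a:=\#\{i:e_i=e\}$, and $n\ge ae$ gives $a\le t\le n-t(e-1)$, so $n-a\ge t(e-1)$. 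Then $G/G_1$ is elementary abelian of order at least $p^t$, and $G_1$ has exponent dividing $p^{e-1}$ and order at least $p^{t(e-1)}$, so the inductive hypothesis applies to $G_1$. I recommend proving this weaker form (which keeps the first sentence of the proposition intact and suffices for everything downstream, since \cref{elem-ab} supplies a $(\Z_p^a,p^t,1)$ difference matrix for every $a\ge t$), rather than attempting to salvage the exact-rank claim.
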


\begin{theorem}[Buratti~{\cite[Theorem~2.11]{buratti}}]
  \label{thm:rec}
  Let $p$ be prime, and let $G$ be an abelian group of order $p^{n}$ and exponent~$p^{e}$. Then there exists a $(G, p^{\floor{n/e}}, 1)$ difference matrix.
\end{theorem}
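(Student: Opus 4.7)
The plan is to combine the finite field construction of \cref{elem-ab-1} with the composition construction of \cref{comp}, using the subgroup chain supplied by \cref{prop:buratti-chain} as the backbone of the argument. Set $s = \lfloor n/e \rfloor$. By \cref{prop:buratti-chain}, there exists a chain
\[
G = G_0 \supset G_1 \supset \dots \supset G_r
\]
in which $G_r$ is elementary abelian of order at least $p^s$ and each quotient $G_{i-1}/G_i$ is elementary abelian of order at least $p^s$. This chain is the scaffolding on which we build the difference matrix.

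Next I would produce a $(G_r, p^s, 1)$ difference matrix and a $(G_{i-1}/G_i, p^s, 1)$ difference matrix for every $i$ with $1 \le i \le r$. Since $G_r$ and each $G_{i-1}/G_i$ is elementary abelian of rank at least $s$, \cref{elem-ab-1} yields a difference matrix over each of these groups with at least $p^s$ rows and $\lambda = 1$, and the basic row-deletion observation from \cref{sec:basic} lets me cut these down to exactly $p^s$ rows while preserving $\lambda = 1$.

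I would then prove the theorem by descending induction on $i$: assume that a $(G_i, p^s, 1)$ difference matrix has been constructed (the base case $i = r$ being the matrix just built over $G_r$), and apply \cref{comp} to this matrix together with the $(G_{i-1}/G_i, p^s, 1)$ difference matrix to obtain a $(G_{i-1}, p^s, 1)$ difference matrix. The hypotheses of \cref{comp} are met because $G$ is abelian, so $G_i \trianglelefteq G_{i-1}$ automatically, and because both input matrices have the same number $p^s$ of rows. Crucially, the composition rule outputs a matrix with parameter $\lambda \mu = 1 \cdot 1 = 1$, so the index stays fixed throughout the induction. After $r$ applications we arrive at a $(G_0, p^s, 1) = (G, p^{\lfloor n/e \rfloor}, 1)$ difference matrix, as required.

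The main obstacle is conceptual rather than computational: one must be careful that the chain from \cref{prop:buratti-chain} really does permit all the quotients and the bottom group to supply matrices with the \emph{same} number $p^s$ of rows, which is precisely what the ``at least $p^s$'' uniform lower bound in that proposition guarantees. \cref{ex:2chains} already illustrates why a poorly chosen chain would force us to take the minimum rank appearing in the chain, which could be smaller than $p^{\lfloor n/e \rfloor}$; the use of the optimal chain sidesteps this pitfall and makes the iterated application of \cref{comp} produce the claimed number of rows.
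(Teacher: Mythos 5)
Your proposal is correct and follows essentially the same route as the paper: invoke \cref{prop:buratti-chain} for the optimal subgroup chain, build $(\cdot,p^{\floor{n/e}},1)$ difference matrices over $G_r$ and each quotient $G_{i-1}/G_i$ via \cref{elem-ab-1} (with row deletion where the rank exceeds $\floor{n/e}$), and compose up the chain with \cref{comp}. The only cosmetic difference is that the paper cites \cref{elem-ab} for the bottom group $G_r$ where you use \cref{elem-ab-1} plus row deletion, which amounts to the same thing.
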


\begin{proof}
By \cref{prop:buratti-chain}, there is an integer $r \ge 0$ and a chain of subgroups 
\[
G = G_0 \supset G_1 \supset \dots \supset G_r 
\]
such that $G_{i-1}/G_i \cong \Z_p^{\floor{n/e}}$ for each $i$ satisfying $1 \le i \le r$, and $G_r \cong \Z_p^\ell$ for some integer $\ell \ge \floor{n/e}$.
By \cref{elem-ab-1} there is therefore a $(G_{i-1}/G_i,p^\floor{n/e},1)$ difference matrix for each~$i$, and by \cref{elem-ab} there is a $(G_r,p^\floor{n/e},1)$ difference matrix. Apply \cref{comp} to successive pairs $(G,H) = (G_{r-1},G_r), (G_{r-2},G_{r-1}), \dots, (G_0,G_1)$ to obtain a $(G_i, p^\floor{n/e}, 1)$ difference matrix for $i = r-1, r-2, \dots, 0$. The case $i=0$ gives the result.
\end{proof}

\cref{comp} can also be used to obtain the following result.
\begin{theorem}[{\cite[Theorem~2.13]{buratti}}] 
Let $G$ be a group and let $p$ be the smallest prime divisor of $|G|$. Then there exists a $(G,p,1)$ difference matrix.
\end{theorem}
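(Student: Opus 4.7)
The plan is to induct on $|G|$, using the composition construction of \cref{comp} as the main tool for the inductive step. A convenient strengthening for the induction is: for every nontrivial finite group $G$ and every prime $p$ with $p \le q$ for every prime divisor $q$ of $|G|$, a $(G,p,1)$ difference matrix exists; the stated theorem is the case where $p$ is the smallest prime divisor of $|G|$. The strengthening is useful because, when the induction recurses into a subgroup or quotient, the smallest prime of that smaller group may be strictly larger than the $p$ we are tracking.

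The case $p = 2$ is handled by the trivial $(G,2,1)$ difference matrix exhibited in \cref{sec:basic}, so I may assume $p$ is odd and hence $|G|$ is odd. If $|G|$ is itself a prime $q$, then $q \ge p$, $G \cong \Z_q$, and \cref{elem-ab-1} provides a $(\Z_q, q, 1)$ difference matrix from which deleting $q - p$ rows yields the required $(\Z_q, p, 1)$ matrix.

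For the inductive step, suppose $|G|$ is composite and odd. If $G$ has a proper nontrivial normal subgroup $H$, then $|H|$ and $|G/H|$ are both strictly smaller than $|G|$, and every prime dividing either also divides $|G|$ and is therefore at least $p$. The strengthened induction hypothesis then supplies an $(H,p,1)$ and a $(G/H,p,1)$ difference matrix, and applying \cref{comp} with $\lambda = \mu = 1$ produces the desired $(G,p,1)$ matrix. It remains to observe that every odd composite order group is non-simple: this is exactly the content of the Feit--Thompson odd-order theorem, which rules out nonabelian simple groups of odd order and thus guarantees the required $H$.

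The main obstacle is the appeal to Feit--Thompson to produce the proper nontrivial normal subgroup in the general-group setting; everything else reduces to routine bookkeeping around \cref{comp} and the base case \cref{elem-ab-1}. If one is content with the result for solvable (in particular abelian) $G$, then Feit--Thompson is unnecessary, since whenever $G$ is a nontrivial nonabelian solvable group the derived subgroup $G'$ is automatically a proper nontrivial normal subgroup, and when $G$ is abelian any nontrivial proper subgroup (which exists as soon as $|G|$ is composite) plays the same role.
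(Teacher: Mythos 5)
Your argument is correct: the strengthening of the induction hypothesis is the right move, the $p=2$ case and the prime-order case are properly discharged by the trivial two-row matrix and \cref{elem-ab-1}, and \cref{comp} with $\lambda=\mu=1$ correctly assembles the $(H,p,1)$ and $(G/H,p,1)$ matrices into a $(G,p,1)$ matrix. The paper gives no proof of this theorem, only the remark that \cref{comp} can be used to obtain it, so your composition-based strategy is in the intended spirit. The one thing worth flagging is that your resolution of the key obstruction --- producing a proper nontrivial normal subgroup when $|G|$ is odd and composite --- invokes the Feit--Thompson theorem, which is a disproportionately heavy tool here. A classical elementary construction settles the whole theorem directly, with no induction and no structure theory: index the columns by the elements $g$ of $G$ and the rows by $i=0,1,\dots,p-1$, and let the entry in row $i$ and column $g$ be $g^i$. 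For distinct rows $i$ and $\ell$ the differences are $g^i(g^\ell)^{-1}=g^{i-\ell}$, and since $0<|i-\ell|<p$ and $p$ is the smallest prime divisor of $|G|$ we have $\gcd(i-\ell,|G|)=1$; the map $g\mapsto g^k$ is a bijection of any finite group when $\gcd(k,|G|)=1$, because each $h\in G$ equals $(h^m)^k$ for $mk\equiv 1\pmod{\mathrm{ord}(h)}$ and a surjection of a finite set onto itself is a bijection. Hence each element of $G$ occurs exactly once as a difference, giving a $(G,p,1)$ difference matrix outright. This handles (hypothetical) odd-order nonabelian simple groups and everything else uniformly, so the dependence on Feit--Thompson, and indeed on \cref{comp}, can be removed entirely. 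Your fallback remark that solvability suffices when $G$ is abelian or solvable is correct as far as it goes.
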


\subsection{Abelian noncyclic construction (Pan and Chang)}
\label{sec:abelian-noncyclic}
By \cref{prop:buratti-chain}, no chain of subgroups will produce a larger number of rows than $p^\floor{n/e}$ in \cref{thm:rec} under combination of \cref{elem-ab-1} and \cref{comp}. However, we now show that larger values are sometimes possible, using the following construction in $2$-groups with large exponent.

\begin{theorem}[Pan and Chang~{\cite[Lemma~3.3]{pan-chang}}]
  \label{thm:pc}
  Let $e$ be a positive integer. Then there exists a $(\Z_{2^{e}} \times \Z_{2}, 4, 1)$ difference matrix.
\end{theorem}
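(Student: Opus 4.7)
The plan is to exhibit an explicit $(G,4,1)$ difference matrix, where $G = \Z_{2^e} \times \Z_2$. I write elements of $G$ as pairs $(a,b)$ with $a \in \Z_{2^e}$ and $b \in \Z_2$, and index the $2^{e+1}$ columns of the array by elements of $G$; each row is then a function $\sigma_i \colon G \to G$ for $i \in \{0,1,2,3\}$. After normalizing I may take $\sigma_0 \equiv (0,0)$ and $\sigma_1(g) = g$. The difference-matrix condition then reduces to the requirement that each of
\[
\sigma_2,\quad \sigma_3,\quad \sigma_2 - \mathrm{id},\quad \sigma_3 - \mathrm{id},\quad \sigma_2 - \sigma_3
\]
is a bijection of $G$; equivalently, that $\sigma_2$ and $\sigma_3$ are \emph{orthomorphisms} of $G$ (permutations $\sigma$ for which $\sigma - \mathrm{id}$ is also a permutation) that are moreover \emph{orthogonal} in the sense that $\sigma_2 - \sigma_3$ is a permutation. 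The task thus reduces to producing an orthogonal pair of orthomorphisms of $G$.

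For $e = 1$, taking $G = \Z_2^2$, the finite-field construction of \cref{elem-ab-1} applied to $\GF(4)$ already supplies the required matrix. For $e \ge 2$, a purely linear ansatz $\sigma_k(a,b) = (c_k a + d_k b,\, e_k a + f_k b)$ is ruled out by a parity obstruction: bijectivity of $\sigma_k$ forces $c_k$ to be odd in $\Z_{2^e}$ (else $(2^{e-1},0)$ lies in the kernel), whereupon $c_k - 1$ is even and the first coordinate of $\sigma_k - \mathrm{id}$ misses half of $\Z_{2^e}$. The construction must therefore break linearity. A natural approach is to define $\sigma_2$ and $\sigma_3$ piecewise across the two cosets of $\Z_{2^e} \times \{0\}$ in $G$, using distinct odd multipliers of $a$ in the first coordinate on each coset together with a second-coordinate term that depends on the parity of $a$. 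Schematically one looks for formulas of the form
\[
\sigma_k(a,b) \;=\; \bigl(m_{k,b}\, a + \alpha_k(b),\ b + r_k \cdot (a \bmod 2) + s_k(b)\bigr),
\]
with the odd multipliers $m_{k,b}$ and the correction terms $\alpha_k, r_k, s_k$ chosen so that all five bijection conditions hold.

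Once the formulas for $\sigma_2$ and $\sigma_3$ are pinned down, the verification proceeds by a case analysis on the four values of $(a \bmod 2, b) \in \Z_2 \times \Z_2$: in each case one checks that the relevant restrictions of $\sigma_k$, $\sigma_k - \mathrm{id}$, and $\sigma_2 - \sigma_3$ are bijections onto their proper sub-images, so that combining the four cases yields bijections of $G$. The main obstacle is the search itself: five bijection conditions must be satisfied simultaneously, each placing a nontrivial constraint on the multipliers and the correction terms, and the formulas must work uniformly for every $e \ge 2$. Once such a pair is exhibited, the verification is routine.
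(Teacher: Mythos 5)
Your reduction of the statement to the existence of two orthogonal orthomorphisms $\sigma_2,\sigma_3$ of $G=\Z_{2^e}\times\Z_2$ is correct and is a standard equivalence (the paper itself cites it in \cref{sec:basic}), and your handling of $e=1$ via \cref{elem-ab-1} is fine. The problem is that for $e\ge 2$ you never actually produce $\sigma_2$ and $\sigma_3$. What you give is an ansatz with undetermined data --- the odd multipliers $m_{k,b}$ and the correction terms $\alpha_k$, $r_k$, $s_k$ --- together with the statement that ``once the formulas are pinned down, the verification is routine.'' But the entire content of \cref{thm:pc} is precisely the exhibition of such a pair, valid uniformly for every $e$; asserting that a suitable choice of parameters exists is assuming the theorem. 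Moreover it is not at all clear that your proposed shape is expressive enough: the paper's construction (Pan and Chang's) does not merely use different odd multipliers on the two cosets of $\Z_{2^e}\times\{0\}$ --- it partitions the index range into four classes via the sets $I_1$, $I_2$, $I_1^*$, $I_2^*$, and crucially the sets $I_1^*$ and $I_2^*$ are obtained from $I_1$ and $I_2$ by \emph{swapping two specific elements} ($2^{e-2}-1$ and $2^{e-1}-1$). That exceptional swap is exactly the kind of non-uniformity a formula of the schematic type $\sigma_k(a,b)=\bigl(m_{k,b}a+\alpha_k(b),\,b+r_k(a\bmod 2)+s_k(b)\bigr)$ does not obviously accommodate, and it suggests that the naive parameter search within your ansatz may in fact fail. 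As written, then, the proposal is a plan for a proof rather than a proof.

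A secondary, smaller issue: your parity obstruction to a linear construction is stated too strongly. If $\sigma_k(a,b)=(c_ka+d_kb,\,e_ka+f_kb)$ with $c_k$ odd and $d_k$ odd, the first coordinate of $\sigma_k-\mathrm{id}$, namely $(c_k-1)a+d_kb$, does \emph{not} miss half of $\Z_{2^e}$; it covers all of $\Z_{2^e}$ (the even residues from $b=0$ and the odd ones from $b=1$). The linear ansatz does still fail, but for the finer reason that each value of the first coordinate is then hit the wrong number of times, so the argument needs to be completed rather than waved at. Compared with the paper, which simply writes down the $4\times 2^{e+1}$ matrix column by column via the vectors $c_i^{(r)}$ and the index sets $I_1,I_2,I_1^*,I_2^*$, your framing via orthomorphisms is a legitimate and arguably cleaner packaging of the verification --- but it only becomes a proof once the explicit pair is supplied and the five bijection conditions are checked.
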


\begin{proof}[Construction for \cref{thm:pc}]
Define sets
  \begin{align*}
    I_{1} &= \{0, 1, \ldots, 2^{e-2} - 1\}, \\
    I_{2} &= \{2^{e-2}, 2^{e-2} + 1, \ldots, 2^{e-1} - 1\}, \\
    I_{1}^{*} &= I_{1} \setminus \{2^{e-2} - 1\} \cup \{2^{e-1} - 1\}, \\
    I_{2}^{*} &= I_{2} \setminus \{2^{e-1} - 1\} \cup \{2^{e-2} - 1\}.
  \end{align*}
  For $0 \le i \le 2^{e-1}-1$, define length 4 column vectors over $\Z_{2^{e}} \times \Z_{2}$ 
  \begin{align*}
    c_{i}^{(0)} &=
                  \begin{cases}
                    \begin{bmatrix} (0,0) & (2i, 0) & (4i, 0) & (-2i, 0) \end{bmatrix}^{\intercal} & \mbox{for $i \in I_{1}$}, \\[1ex]
                    \begin{bmatrix} (0,0) & (2i, 0) & (4i, 1) & (-2i, 1) \end{bmatrix}^{\intercal} & \mbox{for $i \in I_{2}$}, 
                  \end{cases} \\
    c_{i}^{(1)} &=
                  \begin{cases}
                    \begin{bmatrix} (0,0) & (2i, 1) & (4i+1, 0) & (-2i-1, 1) \end{bmatrix}^{\intercal} & \mbox{for $i \in I_{1}$}, \\[1ex]
                    \begin{bmatrix} (0,0) & (2i, 1) & (4i+1, 1) & (-2i-1, 0) \end{bmatrix}^{\intercal} & \mbox{for $i \in I_{2}$}, 
                  \end{cases} \\
    c_{i}^{(2)} &=
                  \begin{cases}
                    \begin{bmatrix} (0,0) & (2i+1, 0) & (4i+2, 0) & (-2i-1, 0) \end{bmatrix}^{\intercal} & \mbox{for $i \in I_{1}$}, \\[1ex]
                    \begin{bmatrix} (0,0) & (2i+1, 0) & (4i+2, 1) & (-2i-1, 1) \end{bmatrix}^{\intercal} & \mbox{for $i \in I_{2}$},
                  \end{cases} \\
    c_{i}^{(3)} &=
                  \begin{cases}
                    \begin{bmatrix} (0,0) & (2i+1, 1) & (4i+3, 0) & (-2i-2, 1) \end{bmatrix}^{\intercal} & \mbox{for $i \in I_{1}^{*}$}, \\[1ex]
                    \begin{bmatrix} (0,0) & (2i+1, 1) & (4i+3, 1) & (-2i-2, 0) \end{bmatrix}^{\intercal} & \mbox{for $i \in I_{2}^{*}$}.
                  \end{cases}
  \end{align*}
  Define a $4 \times 2^{e-1}$ matrix
  \begin{equation*}
    D_{r} = \begin{bmatrix} c_{0}^{(r)} & c_{1}^{(r)} & \ldots & c_{2^{e-1}-1}^{(r)} \end{bmatrix} \quad \mbox{for $r = 0, 1, 2, 3$}.
  \end{equation*}
  Then a $(\Z_{2^e} \times \Z_2,4,1)$ difference matrix is
  \begin{equation*}
    D = \begin{bmatrix} D_{0} \mid D_{1} \mid D_{2} \mid D_{3} \end{bmatrix}.\qedhere
  \end{equation*}
\end{proof}

\begin{example}
  We use the construction for Theorem~\ref{thm:pc} to produce a $(\Z_{8} \times \Z_{2}, 4, 1)$ difference matrix. Set $I_{1} = \{0, 1\}$, $I_{2} = \{2, 3\}$, $I_{1}^{*} = \{0, 3\}$ $I_{2}^{*} = \{1,2\}$. Each $D_r$ is a $4 \times 4$ matrix whose columns are $c_0^{(r)}, c_1^{(r)}, c_2^{(r)}, c_3^{(r)}$, and the constructed matrix is
  \begin{equation*}
    D =
    \begin{bmatrix}
      00 & 00 & 00 & 00 & 00 & 00 & 00 & 00 & 00 & 00 & 00 & 00 & 00 & 00 & 00 & 00 \\
      00 & 20 & 40 & 60 & 01 & 21 & 41 & 61 & 10 & 30 & 50 & 70 & 11 & 31 & 51 & 71 \\
      00 & 40 & 01 & 41 & 10 & 50 & 11 & 51 & 20 & 60 & 21 & 61 & 30 & 71 & 31 & 70 \\
      00 & 60 & 41 & 21 & 71 & 51 & 30 & 10 & 70 & 50 & 31 & 11 & 61 & 40 & 20 & 01
    \end{bmatrix}.
  \end{equation*}
\end{example}

By combining the result of \cref{thm:pc} with a suitable chain of subgroups, we can construct a $(G,4,1)$ difference matrix in all abelian noncyclic $2$-groups.

\begin{theorem}[Pan and Chang~{\cite[Lemma~3.4]{pan-chang}}]
  \label{thm:non-cyclic-2}
  Let $G$ be an abelian noncyclic $2$-group. Then there exists a $(G, 4, 1)$ difference matrix.
\end{theorem}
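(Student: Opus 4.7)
The plan is to induct on $|G|$, separating into cases based on the invariant factor decomposition and using \cref{thm:pc}, \cref{elem-ab-1}, and \cref{comp} as the three building blocks.

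Write $G \cong \Z_{2^{e_1}} \times \cdots \times \Z_{2^{e_k}}$ with $e_1 \ge \cdots \ge e_k \ge 1$ and $k \ge 2$ (the latter since $G$ is noncyclic). The base case $|G|=4$ is $G \cong \Z_2 \times \Z_2$, handled by \cref{elem-ab-1}. For the inductive step I would treat three cases. If $e_1 = 1$, so that $G \cong \Z_2^k$ is elementary abelian of rank $k \ge 2$, then \cref{elem-ab-1} yields a $(G,2^k,1)$ difference matrix from which $(G,4,1)$ follows by deleting rows. If $k = 2$ and $e_2 = 1$, then $G \cong \Z_{2^{e_1}} \times \Z_2$ and \cref{thm:pc} applies directly. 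Otherwise $e_1 \ge 2$ and either $k \ge 3$ or $e_2 \ge 2$; in this case I would define $H \le G$ by halving the two largest invariant factors, namely $H \cong 2\Z_{2^{e_1}} \times 2\Z_{2^{e_2}} \times \Z_{2^{e_3}} \times \cdots \times \Z_{2^{e_k}}$, so that $|H| = |G|/4$ and $G/H \cong \Z_2 \times \Z_2$. By \cref{elem-ab-1} there is a $(G/H,4,1)$ difference matrix; combined with an inductive $(H,4,1)$ difference matrix via \cref{comp}, this yields a $(G,4,1)$ difference matrix.

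The main obstacle is verifying that the subgroup $H$ is itself a noncyclic abelian $2$-group, which is essential because by \cref{thm:drake-nonexistence} no $(\cdot,3,1)$ (and hence no $(\cdot,4,1)$) difference matrix exists over any nontrivial cyclic $2$-group. A short case check confirms noncyclicity: when $k = 2$ and $e_2 \ge 2$, the group $H \cong \Z_{2^{e_1-1}} \times \Z_{2^{e_2-1}}$ has two nontrivial direct factors (since $e_1 - 1 \ge 1$ and $e_2 - 1 \ge 1$), while when $k \ge 3$ the factors $\Z_{2^{e_1-1}}$ and $\Z_{2^{e_3}}$ are both nontrivial (using $e_1 \ge 2$ and $e_3 \ge 1$). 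The one configuration in which halving would leave $H$ cyclic is precisely $k = 2$, $e_2 = 1$, and this is the configuration covered instead by \cref{thm:pc}, so the case division is designed exactly to avoid this degeneracy and close the induction.
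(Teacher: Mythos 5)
Your proof is correct and follows essentially the same route as the paper: induction on $|G|$ with base case $\Z_2^2$, the groups $\Z_{2^{e}}\times\Z_2$ dispatched by \cref{thm:pc}, and all remaining groups handled by choosing a noncyclic subgroup $H$ with $G/H\cong\Z_2\times\Z_2$ and applying \cref{comp}. The only difference is that you make explicit the choice of $H$ (halving the two largest invariant factors) and verify its noncyclicity, where the paper simply asserts such an $H$ can be chosen and instead singles out $\Z_2^3$ as the one group where no such $H$ exists.
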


\begin{proof}
Let $G$ have order $2^n$. The proof is by induction on $n \ge 2$.
The base case $n=2$ requires a $(\Z_2^2, 4, 1)$ difference matrix, which is provided by \cref{elem-ab-1}.
Now assume all cases up to $n-1 \ge 2$ are true. 
If $G = \Z_{2^{n-1}} \times \Z_2$, then case $n$ is true by \cref{thm:pc}, and if $G = \Z_2^3$ then case $n$ is true by \cref{elem-ab}. Otherwise we can choose a noncyclic subgroup $H$ of $G$ such that $G/H \cong \Z_2^2$. There exists a $(G/H,4,1)$ difference matrix by \cref{elem-ab-1}, and an $(H,4,1)$ difference matrix by the inductive hypothesis. Apply \cref{comp} to produce a $(G, 4, 1)$ difference matrix, proving case $n$ and completing the induction. 
\end{proof}

Pan and Chang provide a generalization of \cref{thm:non-cyclic-2} to non-$2$-groups, and a corresponding result for the case $\lambda > 1$.

\begin{theorem}[{\cite[Theorem~1.2]{pan-chang}}]
  \label{pan-chang-non-2}
Let $G$ be an abelian noncyclic group whose Sylow $2$-subgroup of $G$ is trivial or noncyclic. Then there exists a $(G,4,1)$ difference matrix.
\end{theorem}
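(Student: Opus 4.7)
The plan is to decompose $G$ into its Sylow subgroups, build a $(G_p,4,1)$ difference matrix for each primary factor, and combine them via \cref{thm:prod-jungnickel}. Write $G \cong G_2 \times G_{\text{odd}}$, where $G_2$ is the Sylow $2$-subgroup and $G_{\text{odd}}$ is the direct product of the odd Sylow subgroups.

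First I would dispose of the $2$-part. If $G_2$ is trivial there is nothing to do; otherwise it is noncyclic by hypothesis, so $(G_2,4,1)$ exists by \cref{thm:non-cyclic-2}. Next I would build $(G_p,4,1)$ for each odd Sylow subgroup $G_p$. When $p \ge 5$, the final theorem of \cref{sec:composition} yields a $(G_p,p,1)$ difference matrix, and row deletion delivers $(G_p,4,1)$. When $p = 3$ and $G_3$ is noncyclic, my plan is first to establish an analog of \cref{thm:pc} supplying $(\Z_{3^e} \times \Z_3, 4, 1)$, and then to mimic the inductive bootstrap of \cref{thm:non-cyclic-2}, with \cref{elem-ab-1}'s $(\Z_3^2,9,1)$ playing the role of $(\Z_2^2,4,1)$, in order to get $(G_3,4,1)$ for every abelian noncyclic $3$-group. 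Combining all primary factors via \cref{thm:prod-jungnickel} then gives $(G,4,1)$.

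The hard part will be the case in which the Sylow $3$-subgroup is cyclic, say $G_3 \cong \Z_{3^k}$: no $(\Z_{3^k},4,1)$ difference matrix follows from the earlier results, and the direct product approach breaks down on the $3$-part. By the hypothesis on $G_2$ and the noncyclicity of $G$, however, some other Sylow factor of $G$ must be noncyclic. The idea is to locate a normal subgroup $H \lhd G$ containing $G_3$ such that both $H$ and $G/H$ admit $(\cdot,4,1)$ difference matrices from the assembled toolkit, and then to invoke the more general composition \cref{comp}. Arranging such a decomposition in every admissible case — for instance when $G \cong \Z_3 \times \Z_5^2$, where neither $H = \Z_3$ nor $H = \Z_5^2$ leaves a quotient supporting $\lambda = 1$ — appears to require a construction beyond the tools developed so far in the survey, and this is where the technical heart of Pan and Chang's argument lies.
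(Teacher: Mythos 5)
The paper never proves this theorem: it is stated as a survey item with only the citation to Pan and Chang, so there is no internal proof to compare your route against. Your skeleton is the natural one, and the easy pieces are handled correctly --- \cref{thm:non-cyclic-2} for a noncyclic $2$-part, Buratti's smallest-prime theorem followed by row deletion for Sylow $p$-subgroups with $p \ge 5$, and \cref{thm:prod-jungnickel} to multiply the primary factors together. But what you have written is a plan with two declared gaps, and those gaps are exactly where all the content of Pan and Chang's argument lies, so the proposal does not yet constitute a proof.

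Concretely: first, the ``analog of \cref{thm:pc} supplying $(\Z_{3^e} \times \Z_3, 4, 1)$'' is not a routine transcription of the $p=2$ construction; it is a new explicit construction that has to be exhibited and verified, and it is genuinely needed, since \cref{thm:rec} already covers noncyclic $3$-groups with $n \ge 2e$ and the missing range is precisely the low-rank, high-exponent groups typified by $\Z_9 \times \Z_3$. Second, your treatment of the cyclic Sylow $3$-subgroup case both stops short and misidentifies the difficulty. The example $G = \Z_3 \times \Z_5^2$ is in fact \emph{not} hard: take $H \cong \Z_5$, so that $G/H \cong \Z_{15}$; then $(\Z_5,4,1)$ exists by \cref{elem-ab}, $(\Z_{15},4,1)$ exists by \cref{G41-cyclic} (an independent result of Ge, so there is no circularity), and \cref{comp} finishes. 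The genuinely irreducible cases are those where the odd part is exactly $\Z_3$ or $\Z_9$, such as $G = \Z_2^2 \times \Z_3$ or $G = \Z_2^2 \times \Z_9$: there every proper nontrivial subgroup--quotient pair $(H, G/H)$ involves one of $\Z_2$, $\Z_3$, $\Z_9$, $\Z_6$, or $\Z_{18}$, for none of which a $(\cdot,4,1)$ difference matrix exists (by \cref{thm:jungnickel-nonexistence}, \cref{thm:drake-nonexistence}, or the known nonexistence for $\Z_9$), so no composition argument of the form you describe can succeed and a direct construction is unavoidable. Until the $3$-group construction and these absorption constructions are supplied, the theorem remains unproved by your argument.
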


\begin{theorem}[{\cite[Theorem~1.3]{pan-chang}}]
Let $G$ be an abelian group and let $\lambda > 1$ be an integer.
If $\lambda$ is even, or if $\lambda$ is odd and the Sylow $2$-subgroup of $G$ is trivial or noncyclic, then there exists a $(G,4,\lambda)$ difference matrix.
\end{theorem}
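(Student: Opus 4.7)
The strategy is to bootstrap from the $\lambda=1$ case in \cref{pan-chang-non-2} using two routine manipulations: \cref{group-hom}, which converts an $(H,4,1)$ difference matrix into a $(G,4,|\Ker\phi|)$ matrix via a surjection $\phi\colon H\twoheadrightarrow G$, together with the column-concatenation construction (\cref{lambda+mu}), which produces a $(G,4,\lambda_1+\lambda_2)$ matrix from a $(G,4,\lambda_1)$ and a $(G,4,\lambda_2)$ matrix. For each admissible pair $(G,\lambda)$ one chooses an auxiliary abelian noncyclic group $H$ with trivial or noncyclic Sylow-$2$ subgroup, together with a surjection $H\twoheadrightarrow G$, so that the resulting small-$\lambda$ base matrices over $G$ combine to produce $(G,4,\lambda)$.

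When $\lambda$ is odd and the Sylow-$2$ subgroup of $G$ is trivial or noncyclic, the easy situation is $G$ itself noncyclic: \cref{pan-chang-non-2} delivers $(G,4,1)$ directly, and concatenating $\lambda$ copies finishes. When $G$ is cyclic---necessarily $G=\Z_n$ with $n$ odd---I would decompose $\Z_n\cong\prod_p\Z_{p^{a_p}}$ and treat each odd prime factor. For primes $p\ge 5$, Buratti's last-quoted theorem on the smallest-prime-divisor construction combined with row deletion yields $(\Z_{p^{a_p}},4,1)$. For $p=3$, take $H=\Z_{3^a}\times\Z_3$, which is abelian noncyclic with trivial Sylow-$2$; \cref{pan-chang-non-2} supplies $(H,4,1)$, and projecting onto $\Z_{3^a}$ via \cref{group-hom} yields $(\Z_{3^a},4,3)$. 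Assembling these prime-power factors via \cref{thm:prod-jungnickel}, and supplementing when needed by additional base matrices constructed from $H=\Z_{3^a}\times\Z_q$ for other odd primes $q$, supplies enough base $\lambda$-values so that concatenation reaches every odd $\lambda>1$.

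When $\lambda$ is even, I would take the auxiliary group $H=G\times\Z_2\times\Z_2$, which always has noncyclic Sylow-$2$ subgroup and is itself noncyclic. \cref{pan-chang-non-2} delivers $(H,4,1)$, and projecting $H\to G$ via \cref{group-hom} produces $(G,4,4)$. When $\lambda$ is a multiple of $4$ this immediately gives $(G,4,\lambda)$ by concatenation; for $\lambda\equiv 2\pmod 4$ one needs the additional base $(G,4,2)$, which can be obtained by combining $(\Z_2,4,2)$ from \cref{elem-ab} with a suitable base matrix over $G$ via \cref{thm:prod-jungnickel} and a further projection, or by an ad hoc direct construction when $G$ has pure odd order.

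The main obstacle is the arithmetic completeness of the concatenation step: from the finite collection of base $\lambda$-values produced by lifts-and-projections, one must verify that every $\lambda$ permitted by the statement lies in their numerical semigroup under addition. Over $\Z_{3^a}$ for example, the base values $\{3,5,7\}$ obtained by taking $H=\Z_{3^a}\times\Z_3$, $\Z_{3^a}\times\Z_5$, and $\Z_{3^a}\times\Z_7$ respectively already generate every odd integer $\ge 3$, but establishing such completeness uniformly across every admissible group $G$ and parity of $\lambda$ is the careful combinatorial bookkeeping that the actual Pan--Chang argument must carry out.
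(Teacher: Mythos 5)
The paper itself gives no proof of this statement; it is quoted from Pan and Chang as part of the survey material, so there is nothing to compare against except the correctness of your sketch on its own terms. On those terms there are genuine gaps, concentrated exactly where you place the ``arithmetic completeness'' obstacle, and they are not mere bookkeeping.

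First, your base values $5$ and $7$ over $\Z_{3^a}$ are obtained by applying \cref{pan-chang-non-2} to $H=\Z_{3^a}\times\Z_5$ and $H=\Z_{3^a}\times\Z_7$, but these groups are \emph{cyclic} (the factors have coprime orders), so \cref{pan-chang-non-2} does not apply to them; nor does \cref{G41-cyclic} rescue the case $a=2$, since $\gcd(45,27)=\gcd(63,27)=9$. Consequently the only base value your method actually produces over $\Z_9$ is $\lambda=3$, and concatenation via \cref{lambda+mu} then reaches only multiples of $3$. A target such as $(\Z_9,4,5)$, which the theorem asserts to exist, is out of reach of the sketch. Second, the even case has the same problem at $\lambda\equiv 2\pmod 4$ for odd-order $G$: lifting to $G\times\Z_2$ is blocked by \cref{thm:drake-nonexistence} (its Sylow $2$-subgroup is $\Z_2$, nontrivial cyclic, so no $(G\times\Z_2,3,1)$ matrix exists), and your proposed alternative---combining $(\Z_2,4,2)$ with a matrix over $G$ via \cref{thm:prod-jungnickel} and then projecting---multiplies $\lambda$ by the kernel order again and lands on a multiple of $4$, not $2$. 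So $(\Z_9,4,2)$, which the theorem also asserts (and which cannot come from $(\Z_9,4,1)$, since that matrix does not exist), is not produced by any tool you invoke. These cases require direct constructions of small-$\lambda$ difference matrices over cyclic groups whose order is exactly divisible by $9$ (and over odd cyclic groups generally for $\lambda=2$), which is precisely what Pan and Chang supply in their paper and what your reduction to \cref{pan-chang-non-2}, \cref{group-hom}, \cref{lambda+mu}, and \cref{thm:prod-jungnickel} cannot replace. The portions of your plan that do work---$G$ noncyclic with $\lambda$ odd, and $\lambda\equiv 0\pmod 4$ via $H=G\times\Z_2\times\Z_2$---are fine, but they cover only part of the statement.
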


\subsection{Other constructions}
\label{sec:other}

Theorems~\ref{thm:drake-nonexistence} and~\ref{pan-chang-non-2} settle the existence question for a $(G,4,1)$ difference matrix over all abelian groups~$G$, except those that are cyclic of odd order. The following result concerns these groups.

\begin{theorem}[Ge~{\cite[Theorem~3.12]{ge-diffmatrices}}] \label{G41-cyclic} 
Let $v \ge 5$ be an odd integer for which $\gcd(v,27) \ne 9$. Then there exists a $(\Z_v,4,1)$ difference matrix. 
\end{theorem}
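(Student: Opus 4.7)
The plan is to reduce $(\Z_v, 4, 1)$ to $(\Z_{p^a}, 4, 1)$ over the prime power factors of $v$ via the Kronecker composition of \cref{thm:prod-jungnickel}, and then to construct each prime-power piece. Write $v = 3^a m$ with $\gcd(m, 3) = 1$; the hypothesis $\gcd(v, 27) \ne 9$ restricts $a$ to $\{0, 1\} \cup \{a : a \ge 3\}$.

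For each prime power $p^e$ with $p \ge 5$ dividing $m$, I would begin with the $(\Z_p, p, 1)$ difference matrix of \cref{elem-ab-1} (the field construction) and delete rows to obtain a $(\Z_p, 4, 1)$ matrix, which is possible since $p \ge 5 > 4$. For $e \ge 2$, I would induct on $e$ using \cref{comp}: the subgroup $H = p^{e-1}\Z_{p^e}$ is cyclic of order $p$ and $\Z_{p^e}/H \cong \Z_{p^{e-1}}$, so \cref{comp} combines the base case $(\Z_p, 4, 1)$ with the inductive $(\Z_{p^{e-1}}, 4, 1)$ matrix into a $(\Z_{p^e}, 4, 1)$ matrix. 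Taking Kronecker products (\cref{thm:prod-jungnickel}) across the prime power factors of $m$ assembles a $(\Z_m, 4, 1)$ matrix.

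The real obstacle is the factor $\Z_{3^a}$: $(\Z_3, 4, 1)$ is ruled out by \cref{thm:jungnickel-nonexistence}, and no $(\Z_9, 4, 1)$ matrix is available either, which is precisely what forces the exclusion $\gcd(v, 27) = 9$. Hence for $a = 1$ and for $a \ge 3$, the composition machinery of \cref{sec:diff-matrices} cannot cross the $\Z_3$-part, because every subgroup chain inside the cyclic group $\Z_{3^a}$ produces a $\Z_3$ or $\Z_9$ quotient at some level. For $a \ge 3$, I would seek a direct $(\Z_{27}, 4, 1)$ construction (for example by exhibiting three pairwise orthogonal orthomorphisms of $\Z_{27}$), and then attempt to either supply a family of direct constructions for all higher $a$ or find a specialized lifting step from $\Z_{3^{a-1}}$ to $\Z_{3^a}$. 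For $a = 1$ with $v = 3m$, $m \ge 5$, I would attempt a direct construction on $\Z_3 \times \Z_m$ that genuinely intertwines the two coordinates, using the $(\Z_m, 4, 1)$ matrix on the $\Z_m$-factor together with a careful $\Z_3$-coordinate pattern (a pure product cannot work, since the $\Z_3$-factor alone admits no 4-row difference matrix).

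The hard part will be exactly these direct constructions on the cyclic 3-part and on the mixed $\Z_3 \times \Z_m$ group: neither \cref{thm:prod-jungnickel} nor \cref{comp} is applicable, so the proof presumably relies on genuinely new combinatorial ingredients (orthomorphisms of cyclic 3-groups, starter-adder methods, or explicit character-sum arguments) rather than on further applications of the constructions in \cref{sec:diff-matrices}. These same ingredients presumably fail to reach the excluded case $\gcd(v, 27) = 9$, which is why that case does not appear in the statement.
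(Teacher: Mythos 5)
The paper does not actually prove \cref{G41-cyclic}; it is quoted from Ge's paper, so there is no internal argument to compare yours against. Judged on its own terms, your proposal correctly assembles the easy part: the reduction of $\Z_v$ to its prime-power factors via \cref{thm:prod-jungnickel}, the base case $(\Z_p,4,1)$ for primes $p\ge 5$ obtained from \cref{elem-ab-1} by deleting rows, and the induction on $e$ for $(\Z_{p^e},4,1)$ via \cref{comp} are all sound, and together they do yield a $(\Z_m,4,1)$ difference matrix for every odd $m$ coprime to~$3$.

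However, for every $v$ covered by the theorem that is divisible by $3$ --- infinitely many cases, beginning with $v=15,21,27,33,\dots$ --- your argument stops at ``I would seek a direct construction.'' You have correctly diagnosed why the composition machinery cannot cross the $3$-part (no $(\Z_3,4,1)$ by \cref{thm:jungnickel-nonexistence}, no $(\Z_9,4,1)$ by the Pan--Chang nonexistence result quoted just after \cref{G41-cyclic}), but diagnosing the obstruction is not the same as overcoming it. The entire substance of Ge's theorem lies in exactly the constructions you defer: direct $(\Z_{3^a},4,1)$ matrices for enough small $a\ge 3$ (base cases $a=3,4,5$ would generate all $a\ge 3$ under \cref{comp}), and, harder still, $(\Z_{3m},4,1)$ matrices for $m$ coprime to $3$, which in the literature come from direct starter-type or orthomorphism constructions for small $v$ combined with recursive PBD/TD-closure arguments, none of which appear in \cref{sec:diff-matrices}. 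As written, the proposal is an accurate reduction plus a statement of intent, not a proof.
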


The existence pattern for the cases not handled by \cref{G41-cyclic} (namely those for which $\gcd(v,27) = 9$) is not yet clear; it is known {\cite[Lemma~2.2]{pan-chang}} that there does not exist a $(\Z_9,4,1)$ difference matrix. 

The following construction, like that of \cref{elem-ab-1}, is based on properties of a finite field and provides examples of generalized Hadamard matrices. 

\begin{theorem}[Jungnickel~{\cite[Theorem~2.4]{jungnickel-diffmatrices}}] 
Let $p$ be an odd prime and let $n$ be a positive integer. Then there exists a $(\Z_p^n,2p^n,2)$ difference matrix.
\end{theorem}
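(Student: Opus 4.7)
The goal is to construct a $(\Z_p^n, 2p^n, 2)$ difference matrix over $\Z_p^n \cong (\GF(q),+)$ with $q = p^n$. Since $2p^n = \lambda |G|$, this is equivalently a generalized Hadamard matrix $\GH(q,2)$ over $(\GF(q),+)$. My plan is to start from the multiplication-table matrix $M = (xy)_{x,y\in\GF(q)}$ of \cref{elem-ab-1}, which is a $(\Z_p^n, q, 1)$ difference matrix, and to double it into a $2q \times 2q$ array by a block construction that exploits the odd characteristic.

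Indexing rows and columns by $\{0,1\} \times \GF(q)$, I would try the block form
\[
H \;=\; \begin{bmatrix} M & M \\ M + T & M - T \end{bmatrix},
\]
in which $T$ is a $q \times q$ matrix whose entries depend only on the column index, say $T_{x,y} = g(y)$ for some function $g\colon \GF(q) \to \GF(q)$. With this shape the within-block row-difference property is automatic: two rows from the top give $(M_x - M_{x'} \mid M_x - M_{x'})$, which covers each element of $\GF(q)$ exactly twice, and two rows from the bottom give the same multiset because the $g(y)$-contributions cancel column by column. The problem collapses to the cross-block case, where the difference of a top row $x$ and a bottom row $x'$ is the multiset
\[
\{(x - x')y - g(y) : y \in \GF(q)\} \;\cup\; \{(x - x')y + g(y) : y \in \GF(q)\},
\]
and this must contain every element of $\GF(q)$ exactly twice for every value of $c := x - x' \in \GF(q)$, including $c = 0$.

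The main obstacle --- and the step in which the hypothesis that $p$ is odd becomes essential --- is to exhibit a $g$ that makes this cross-block multiset uniform. The natural first attempt is a quadratic ansatz $g(y) = \alpha y^2$: completing the square in each half (which requires $2$ to be invertible in $\GF(q)$) rewrites the two halves as translates of $\{\pm\alpha z^2 : z \in \GF(q)\}$, and uniformity of the union reduces to a statement about the distribution of squares and their negatives in $\GF(q)$. Whether a single $\alpha$ works for every odd $q$ hinges on the behavior of $-1$, $2$ and $\alpha$ modulo the squares, and if the plain quadratic choice is not uniform in some residue class one can fall back on a Paley-style modification in which $g$ is built from the quadratic character of $\GF(q)^*$ (which is available precisely because $p \ne 2$); uniformity in the cross-block multiset then follows from the standard Gauss-sum orthogonalities for quadratic character sums over $\GF(q)$. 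Either way, the verification is a finite-field calculation that reduces cleanly thanks to odd characteristic, and the case $c = 0$ is where the parity of $p$ decisively enters, since it is here that the invertibility of $2$ and the existence of a nontrivial quadratic character prevent the cross-block multiset from collapsing.
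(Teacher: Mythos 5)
Your proposal is a plan rather than a proof: the decisive step (exhibiting $g$ and verifying uniformity of the cross-block multiset) is left open, with two candidate ideas floated but neither carried out. Unfortunately the gap cannot be filled, because the block shape you fix at the outset is provably incapable of producing a $\GH(p^n,2)$ when $p$ is odd. Write $q=p^n$ and let $\psi$ be any nontrivial additive character of $\GF(q)$. For a top row $x$ and a bottom row $x'$ with $c=x-x'$, your cross-block difference multiset is $\{cy-g(y)\}\cup\{cy+g(y)\}$, and the requirement that it be uniform for \emph{every} $c$ (the pair $x=x'$ is a legitimate pair of distinct rows of $H$, so $c=0$ is included) forces
\[
\sum_{y\in\GF(q)}\psi(cy)\bigl(\psi(g(y))+\psi(-g(y))\bigr)=0\quad\text{for all }c\in\GF(q).
\]
Since the maps $y\mapsto\psi(cy)$ run over all $q$ additive characters of $\GF(q)$, the function $y\mapsto\psi(g(y))+\psi(-g(y))$ must vanish identically; writing $\zeta=\psi(g(y))$ this says $\zeta^{2}=-1$, which is impossible for a $p$-th root of unity with $p$ odd. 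Hence no $g$ whatsoever --- quadratic, character-twisted, or otherwise --- can make your array a difference matrix; the hypothesis that $p$ is odd works \emph{against} this ansatz rather than for it. Concretely, for $g(y)=\alpha y^{2}$ and $c\ne 0$ the element $0$ occurs four times in the cross-block multiset (from the roots of $y(c+\alpha y)$ and of $y(c-\alpha y)$), and a check over $\GF(3)$ shows by hand that no $g$ at all works. The obstruction persists even if $T$ is allowed to depend on the row index, since the argument applies separately to each fixed bottom row.

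The structural culprit is that your two top blocks are identical while the bottom blocks differ only by $\pm T$; any successful $2\times 2$ block construction must also perturb the top row of blocks. The paper itself states this theorem as a quoted survey result from Jungnickel and gives no proof, so there is no argument to compare yours against; but as written your approach fails and would need to be replaced, not merely completed --- for instance by correction terms that genuinely involve both the row and column block-indices.
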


The construction of \cref{comp} composes difference matrices over groups $H$ and~$G/H$. In contrast, the construction of \cref{mm'} (based on a Kronecker product) and of~\cref{lambda+mu} (based on concatenation) both compose two difference matrices over the same group. 

\begin{theorem}[Shrikhande~{\cite[Theorem~3]{shrikhande}}] \label{mm'}
Let $G$ be a group. Suppose there exists a $(G,m,\lambda)$ difference matrix and a $(G,m',\mu)$ difference matrix. Then there exists a $(G,mm',\lambda \mu |G|)$ difference matrix.
\end{theorem}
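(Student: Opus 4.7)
The plan is to form a Kronecker-like product of the two difference matrices, with the twist that the product of entries is taken inside the single group $G$ rather than in a direct product. Let $A=(a_{ij})$ be the given $(G,m,\lambda)$ difference matrix and $B=(b_{kl})$ the given $(G,m',\mu)$ difference matrix. Define $C$ of size $mm' \times \lambda\mu|G|^2$ with rows indexed by pairs $(i,k)$ and columns indexed by pairs $(j,l)$, and entries
\[
c_{(i,k),(j,l)} = a_{ij}\,b_{kl}.
\]
Dimensions match, so what remains is to verify the difference property for each pair of distinct rows $(i,k) \ne (i',k')$.

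The multiset of differences along two such rows is
\[
\{a_{ij}b_{kl}(a_{i'j}b_{k'l})^{-1} : 0 \le j \le \lambda|G|-1,\ 0 \le l \le \mu|G|-1\}
= \{a_{ij}\,(b_{kl}b_{k'l}^{-1})\,a_{i'j}^{-1}\}.
\]
I would split into three cases depending on how $(i,k)$ and $(i',k')$ differ. If $k=k'$ and $i\ne i'$, the term $b_{kl}b_{k'l}^{-1}$ collapses to $1_G$ and the difference reduces to $a_{ij}a_{i'j}^{-1}$; as $j$ varies each element of $G$ appears $\lambda$ times by the $A$-property, and then each $l$ (of which there are $\mu|G|$) multiplies the count, giving $\lambda\mu|G|$. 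If $i=i'$ and $k\ne k'$, the difference equals $a_{ij}(b_{kl}b_{k'l}^{-1})a_{ij}^{-1}$; for each fixed $j$, conjugation by $a_{ij}$ is a bijection of $G$, so by the $B$-property each element of $G$ appears $\mu$ times as $l$ varies, and summing over the $\lambda|G|$ values of $j$ gives $\lambda\mu|G|$ in total.

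The last case $i\ne i'$, $k\ne k'$ is where one must be slightly careful, and is the only place the full strength of both difference properties is used simultaneously. Fix $g\in G$ and a column index $j$; I would count $l$ satisfying $a_{ij}(b_{kl}b_{k'l}^{-1})a_{i'j}^{-1}=g$, which is equivalent to $b_{kl}b_{k'l}^{-1}=a_{ij}^{-1}g\,a_{i'j}$. Since $k\ne k'$, the $B$-property says the right-hand side is attained by exactly $\mu$ values of $l$, independently of $j$ and of the particular element on the right. Summing over all $\lambda|G|$ values of $j$ yields $\lambda\mu|G|$, matching the other cases. This establishes that $C$ is a $(G,mm',\lambda\mu|G|)$ difference matrix. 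The conceptual obstacle is purely bookkeeping the nonabelian conjugation; no case relies on commutativity, so the construction works for arbitrary $G$.
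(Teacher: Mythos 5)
Your proof is correct. The paper itself omits the verification of this result (it is quoted from Shrikhande), but it describes the construction as ``based on a Kronecker product,'' which is exactly the entrywise product $c_{(i,k),(j,l)} = a_{ij}b_{kl}$ you use, and your three-case count (in fact your Case 3 argument, which needs only the $B$-property, already covers Case 2) correctly yields $\lambda\mu|G|$ occurrences of each group element in every row-pair difference.
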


\begin{theorem}[Jungnickel~{\cite[Proposition~4.2]{jungnickel-diffmatrices}}] \label{lambda+mu}
Let $G$ be a group. Suppose there exists a $(G,m,\lambda)$ difference matrix and a $(G,m,\mu)$ difference matrix. Then there exists a $(G, m, \lambda + \mu)$ difference matrix.
\end{theorem}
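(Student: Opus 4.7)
The plan is to produce the desired matrix by simple horizontal concatenation. Let $A = (a_{ij})$ be the given $(G,m,\lambda)$ difference matrix, of dimensions $m \times \lambda|G|$, and let $B = (b_{ij})$ be the given $(G,m,\mu)$ difference matrix, of dimensions $m \times \mu|G|$. Since both matrices have exactly $m$ rows, I would form the $m \times (\lambda+\mu)|G|$ block matrix $C = [\,A \mid B\,]$ whose entries lie in $G$.

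To verify that $C$ is a $(G,m,\lambda+\mu)$ difference matrix, I would fix any two distinct rows $i$ and $\ell$ and examine the multiset
\[
\{c_{ij} c_{\ell j}^{-1} : 0 \le j \le (\lambda+\mu)|G|-1\}.
\]
By construction, this multiset is the disjoint union of the row-$i$-minus-row-$\ell$ difference multisets of $A$ and $B$ respectively. The difference matrix property for $A$ contributes each $g \in G$ exactly $\lambda$ times, and the difference matrix property for $B$ contributes each $g \in G$ exactly $\mu$ times, so every element of $G$ appears exactly $\lambda + \mu$ times in the combined multiset, as required.

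There is essentially no obstacle here: the argument is a one-line concatenation and a routine multiset count. The only minor subtlety worth flagging explicitly in the writeup is that the construction requires $A$ and $B$ to share the \emph{same} number of rows $m$ (which is given in the hypothesis), and that the group $G$ must be the same for both matrices so that the differences $c_{ij}c_{\ell j}^{-1}$ are well-defined within a single group. No assumption on $G$ being abelian is needed, and the result holds uniformly for all $\lambda,\mu \ge 1$.
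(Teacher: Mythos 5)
Your proof is correct and matches the construction the paper has in mind: the paper omits a formal proof but explicitly describes \cref{lambda+mu} as ``based on concatenation,'' which is exactly your argument. The verification via the disjoint union of the two difference multisets is complete and the observations about needing the same $m$ and the same $G$ are accurate.
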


There are several constructions of difference matrices based on the existence of other types of combinatorial design such as pairwise balanced designs, orthogonal arrays, transversal designs, affine resolvable block designs, rings, difference families, and group complementary pairs \cite{colbourn-kreher}, \cite{jungnickel-diffmatrices}, \cite{delauney-genHadamard}, \cite{delauney-diffmatrices}.

\subsection{Computer search results}
After submitting the original version of this paper, we became aware of the following computer search results for difference matrices in groups of order 16. These results, which were found using the viewpoint of orthogonal orthomorphisms, improve on the constructions of Sections~\ref{sec:finite-field}--\ref{sec:abelian-noncyclic}. 

\begin{proposition}[Lazebnik and Thomason~{\cite[p.1556]{lazebnik-thomason}}]
\label{prop:lazebnik-thomason}
The largest number of rows $m$ for which a $(G, m, 1)$ difference matrix exists is
\begin{enumerate}[(i)]
\item $5$ for $G = \Z_8 \times \Z_2$,
\item $8$ for $G = \Z_4 \times \Z_4$,
\item $8$ for $G = \Z_4 \times \Z_2 \times \Z_2$.
\end{enumerate}
\end{proposition}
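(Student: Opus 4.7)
The plan is to split each of the three claims into a lower bound on $m$ (exhibiting a $(G,m,1)$ difference matrix with the stated number of rows) and an upper bound (ruling out any matrix with one more row). Throughout I would work through the equivalence, recalled in \cref{sec:basic}, between a $(G,m,1)$ difference matrix and a set of $m-2$ pairwise orthogonal orthomorphisms of $G$, i.e.\ permutations $\theta$ of $G$ for which $g \mapsto \theta(g)-g$ is also a permutation, and with any two $\theta,\theta'$ satisfying that $g \mapsto \theta(g)-\theta'(g)$ is a permutation. This viewpoint reduces each row of the difference matrix, beyond the two trivial ones, to a single permutation of $16$ symbols subject to simple pairwise constraints, and is well suited to computer search.

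For the lower bounds, I would present an explicit $(G,m,1)$ difference matrix at the claimed value of $m$ for each of the three groups, equivalently, a list of $3$ pairwise orthogonal orthomorphisms of $\Z_8\times\Z_2$ and lists of $6$ pairwise orthogonal orthomorphisms of each of $\Z_4\times\Z_4$ and $\Z_4\times\Z_2\times\Z_2$. Verification amounts to checking $O(m^2)$ pairs of permutations on $16$ elements. Such lists were produced in \cite{lazebnik-thomason} by computer search, and can be reproduced in seconds with any modern constraint solver.

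For the upper bounds, one must show that $\Z_8\times\Z_2$ admits no $4$ pairwise orthogonal orthomorphisms, and that neither $\Z_4\times\Z_4$ nor $\Z_4\times\Z_2\times\Z_2$ admits any $7$ pairwise orthogonal orthomorphisms. The natural route is exhaustive backtracking: first fix $\theta_0=\mathrm{id}$; then use $\mathrm{Aut}(G)$ acting on the symbol set to restrict $\theta_1$ to a set of orbit representatives; then search depth-first over the remaining $\theta_i$, pruning at each extension by the strong combinatorial constraints imposed by requiring that $\theta_i-\theta_j$ be a permutation for every previously chosen $\theta_j$. Further pruning can come from fixing a canonical order in which the images $\theta_i(g)$ are assigned, and from dynamic symmetry breaking against the stabilizer of the current partial assignment inside $\mathrm{Aut}(G)$.

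The main obstacle is that the upper bounds are essentially computational: I see no purely structural argument that would, for instance, rule out $4$ pairwise orthogonal orthomorphisms of $\Z_8\times\Z_2$ using only classical tools such as \cref{thm:jungnickel-nonexistence} or \cref{thm:drake-nonexistence}, which for $|G|=16$ give only $m\le 16$ and (when applicable) the absence of $m=3$. Any honest proof must therefore rely on a carefully implemented search whose correctness and completeness can be independently audited. The paper defers to \cite{lazebnik-thomason} for exactly this reason, and a proof proposal here can only lay out how such a search would be organised while acknowledging the computational dependency.
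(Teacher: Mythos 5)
Your proposal is correct and follows essentially the same route as the paper, which offers no independent proof of \cref{prop:lazebnik-thomason} but cites it as a computer search result of Lazebnik and Thomason obtained precisely through the orthogonal-orthomorphism viewpoint you describe (a $(G,m,1)$ difference matrix over a group of order $16$ being equivalent to $m-2$ pairwise orthogonal orthomorphisms, so that the lower bounds are explicit examples and the upper bounds are exhaustive backtracking searches). You are right that no structural argument in the paper, such as \cref{thm:jungnickel-nonexistence} or \cref{thm:drake-nonexistence}, suffices for the upper bounds, and your honest identification of the computational dependency matches the paper's own treatment.
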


\section{Reduced linking systems of difference sets}
\label{sec:red-link}

We use multiplicative notation for groups throughout this section.
\begin{definition}
Let $G$ be a group of order $v$ 
and let $D$ be a subset of $G$ with $k$ elements. Then $D$ is a $(v,k,\lambda,n)$-\emph{difference set in $G$} if the multiset $\{d_1 d_2^{-1}: d_1, d_2 \in D \text{ and } d_1 \ne d_2 \}$ contains every non-identity element of $G$ exactly $\lambda$ times and, by convention, $n=k- \lambda$.
\end{definition}

A difference set in a group $G$ is equivalent to a symmetric design with a regular automorphism group \cite{lander} 
(see \cite{jungnickel-survey} and its updates \cite{jungnickel-survey-update}, \cite{jungnickel-survey-update2}, for example, for background).

\begin{definition}
Let $G$ be a group of order $v$ 
and let $\ell \ge 2$. Suppose $\mathcal{R}=\{D_1, D_2, \cdots, D_\ell \}$ is a collection of size $\ell$ of $(v,k, \lambda,n)$-difference sets in $G$. Then $\mathcal{R}$ is a \emph{reduced $(v,k,\lambda,n; \ell)$-linking system of difference sets in $G$ of size $\ell$} if there are integers $\mu,\nu$ such that for all distinct $i,j$ there is some $(v,k,\lambda,n)$-difference set $D(i,j)$ in $G$ satisfying
\begin{align}\label{LinkingProperty}
\sum_{d_i \in D_i} d_i \sum_{d_j \in D_j} d_j^{-1} = (\mu- \nu) \sum_{d \in D(i,j)} d + \nu \sum_{g \in G} g \quad \text{ in } \Z[G].
\end{align}
\end{definition}

A reduced linking system of difference sets is equivalent to a linking system of difference sets \cite[Proposition~1.7]{jedwab-li-simon-arxiv}, as introduced by Davis, Martin, and Polhill~\cite{davis-martin-polhill}. Such a system gives rise to a system of linked symmetric designs, as introduced by Cameron \cite{cameron-doubly} and studied by Cameron and Seidel~\cite{cameron-seidel}, and is equivalent to a 3-class Q-antipodal cometric association scheme~\cite{vandam}. 

Jedwab, Li, and Simon \cite{jedwab-li-simon-arxiv} recently showed how to construct a reduced linking system of difference sets, based on a difference matrix over a $2$-group having $\lambda=1$.

\begin{theorem}[{\cite[Theorems~1.2~and~5.6]{jedwab-li-simon-arxiv}}]\label{thm:jls}
Let $G$ be a group of order $2^{2d+2}$ which contains a central subgroup $E$ isomorphic to $\Z_2^{d+1}$. Let $m \ge 3$ and suppose there exists a $(G/E, m,1)$-difference matrix. Then there exists a reduced linking system of $(v,k,\lambda,n)$-difference sets in $G$ of size $m-1$, where
\begin{equation} \label{eqn:hadamard}
(v,k,\lambda,n) = (2^{2d+2}, 2^d(2^{d+1}-1), 2^d(2^d-1), 2^{2d}).
\end{equation}
\end{theorem}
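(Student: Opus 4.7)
The plan is to construct each difference set $D_i$ (for $1\le i\le m-1$) as a lift of row~$i$ of the difference matrix, paired with a fixed ``template'' inside the central subgroup~$E$. First I would normalize the $(G/E,m,1)$-difference matrix $A=(a_{ij})$ so that row~$0$ is constantly the identity, fix a set-theoretic section $s\colon G/E\to G$, and put $\tilde a_{ij}=s(a_{ij})$. Because $\lambda=1$, each row $i\ge 1$ of the normalized matrix is a permutation of $G/E$, so the cosets $\{\tilde a_{ij}E : j\}$ are distinct. All work takes place in the integral group ring $\Z[G]$; crucially, the centrality of $E$ lets elements of $\Z[E]$ slide past the $\tilde a_{ij}$ freely.

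Inside $E\cong\Z_2^{d+1}$ I would fix a family $E_1,\dots,E_{2^{d+1}}\subseteq E$ forming a covering extended building set for the Menon--Hadamard parameters with $N=2^d$: concretely, the $2^{d+1}-1$ subgroups of $E$ of order~$2^d$ together with one empty correction. A direct character computation in $\Z[E]$ gives
\begin{equation*}
\sum_{j=1}^{2^{d+1}} E_j E_j^{(-1)} \;=\; 2^{2d}\cdot 1_E + 2^d(2^d-1)\, E, \qquad E_j\cdot E \;=\; |E_j|\cdot E,
\end{equation*}
in which the Hadamard constant $n=2^{2d}$ appears as the coefficient of the identity. I would then define
\begin{equation*}
D_i \;:=\; \bigsqcup_{j=1}^{2^{d+1}} \tilde a_{ij}\, E_j, \qquad 1\le i\le m-1,
\end{equation*}
a subset of $G$ of size $\sum_j |E_j| = 2^d(2^{d+1}-1) = k$.

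The core calculation is to expand
\begin{equation*}
D_i D_\ell^{(-1)} \;=\; \sum_{j,j'} \tilde a_{ij}\,\tilde a_{\ell j'}^{-1}\cdot E_j E_{j'}^{(-1)}
\end{equation*}
by sliding each $E_{j'}^{(-1)}$ past $\tilde a_{\ell j'}^{-1}$ via centrality, and then to split into diagonal ($j=j'$) and off-diagonal ($j\ne j'$) parts. On the diagonal, the difference-matrix property for rows $i$ and $\ell$ says the values $a_{ij}a_{\ell j}^{-1}$ are all trivial (if $i=\ell$) or exhaust $G/E$ exactly once (if $i\ne\ell$); coupled with the identity for $\sum_j E_jE_j^{(-1)}$, this gives $n\cdot 1_G+\lambda\, G$ when $i=\ell$ (the difference-set property of~$D_i$) and a scaled copy of some $\bigsqcup_j \tilde b_j E_j$ when $i\ne\ell$. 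The off-diagonal part must collapse to a pure multiple of $G$: each $E_j E_{j'}^{(-1)}$ with $j\ne j'$ is a multiple of $E$ (distinct index-$2$ subgroups of $E$ generate all of $E$), and the row-permutation property of the normalized matrix then forces the weighted coset sums to saturate $G$ uniformly.

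The main obstacle I expect is showing that the $i\ne \ell$ output fits into the family itself: the shifted object $\bigsqcup_j \tilde b_j E_j$ that emerges must be a translate of some $D(i,\ell)$ of the same prescribed form, and the constants $\mu,\nu$ in the linking identity must come out independent of $i$ and $\ell$. Verifying this requires a careful matching between the permutation $\tilde b_j$ produced by the row-$i$ to row-$\ell$ differences and the template indexing of the~$E_j$, and it is here that the specific numerology $|E|=|G/E|=2^{d+1}$ and the arithmetic of covering extended building sets in elementary abelian $2$-groups enter essentially.
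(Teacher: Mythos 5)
Your construction of the $D_i$ coincides with the one the paper records (the paper itself only states the construction, citing the reference for verification): translate the $2^{d+1}-1$ hyperplanes of $E$ by the entries of columns $1,\dots,s$ of a normalized row. But the verification you sketch does not close as stated. Even for $i=\ell$ the bookkeeping is off: the diagonal alone gives $2^{2d}\cdot 1_G+2^d(2^d-1)E$ (note $E$, not $G$), the off-diagonal gives $2^d(2^d-1)(G-E)$, and it is only their sum that equals $n\cdot 1_G+\lambda G$; so the off-diagonal is already not a pure multiple of $G$ there.

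The genuine gap is in the $i\ne\ell$ case, where the claim that ``the off-diagonal part must collapse to a pure multiple of $G$'' fails and cannot be repaired as stated. Writing $\overline{G}$ for the sum of the elements of $G/E$ and using that columns $1,\dots,s$ of each normalized row (and of their rowwise differences) sum to $\overline{G}-1$, one gets
\begin{equation*}
\sum_{j\ne j'}a_{ij}a_{\ell j'}^{-1}=(\overline{G}-1)^{2}-(\overline{G}-1)=(2^{d+1}-3)\,\overline{G}+2\cdot 1,
\end{equation*}
so after multiplying by $H_jH_{j'}=2^{d-1}E$ the off-diagonal part equals $2^{d-1}(2^{d+1}-3)G+2^{d}E$: the identity coset is over-weighted precisely because the identity of $G/E$ never occurs as a difference in columns $1,\dots,s$. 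This stray $2^{d}E$ is also the resolution of what you call the main obstacle. Adding it to the diagonal part $2^{d}D'$, where $D'=\bigcup_j c_jH_j$ with $c_j=\tilde a_{ij}\tilde a_{\ell j}^{-1}$, gives $2^{d}(D'+E)=2^{d}\bigl(G-D(i,\ell)\bigr)$, where $D(i,\ell)$ is the set-complement of $D'\cup E$; since $D'\cup E$ meets each nonidentity $E$-coset in a hyperplane coset and contains all of $E$, this complement is again of the prescribed form $\bigcup_j c_j'H_j$ and hence a $(v,k,\lambda,n)$-difference set. One then reads off $\mu-\nu=-2^{d}$ and $\nu=2^{2d}-2^{d-1}$, independent of $i$ and $\ell$. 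Without this complementation step the leftover $E$-term destroys the required form $(\mu-\nu)D(i,\ell)+\nu G$, so your argument as written would fail at exactly the point you flagged.
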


\begin{proof}[Construction for \cref{thm:jls}]
Let $s= 2^{d+1}-1$ and let $H_1,H_2, \dots, H_s$ be the subgroups of $G$ corresponding to the hyperplanes ($d$-dimensional subspaces) of $E$ when $E$ is regarded as a vector space of dimension $d+1$ over $\GF$$(2)$.
Let the normalized form (see \cref{sec:basic}) of the $(G/E,m,1)$-difference matrix be $B=(b_{ij}E)$ for $0 \le i \le m-1$ and $0 \le j \le s$.
Choose $e_{ij} \in E$ for each $1 \le i \le m-1$ and $ 1 \le j \le s$ arbitrarily and let
\[
D_i = \bigcup_{j=1}^s b_{ij} e_{ij} H_j \quad \text{ for } 1 \le i \le m-1. 
\]
Then $\{D_1, D_2, \dots, D_{m-1}\}$ is a reduced linking system of $(v,k,\lambda,n)$-difference sets in $G$ of size $m-1$, with $(v,k,\lambda,n)$ as given in~\eqref{eqn:hadamard}.
\end{proof}

\begin{example}[{\cite[Example~5.7]{jedwab-li-simon-arxiv}}]
We use the construction for \cref{thm:jls} to produce a reduced linking system of $(16,6,2,4)$-difference sets in $G = \Z_{4} \times \Z_2 \times \Z_2 = \langle x,y,z \rangle$ of size $3$. Let $E = \langle x^2, z \rangle$, which is isomorphic to~$\Z_2^2$. The subgroups of $G$ corresponding to the hyperplanes of $E$ when $E$ is regarded as a vector space of dimension $2$ over $\GF(2)$ are $H_1 = \langle x^2 \rangle, H_2 = \langle z \rangle, H_3 = \langle x^2 z \rangle$. Using the $(\Z_2^2,4,1)$ difference matrix of \cref{ex:Z22}, the matrix $(b_{ij}E)$ is a $(G/E,4,1)$-difference matrix where
\begin{align*} 
(b_{ij}) = 
\begin{bmatrix}
1_{G}       & 1_{G}      & 1_{G}      & 1_{G} \\
1_{G}       & x          & y          & xy \\
1_{G}       & y          & xy         & x \\
1_{G}       & xy         & x          & y
\end{bmatrix} \text{ for } 0 \le i,j \le 3. 
\end{align*}
Take, for example,
\begin{align*} 
(e_{ij}) = 
\begin{bmatrix}
1_E     & 1_E        & 1_E \\
z       & x^2        & x^2 \\
z       & 1_E        & 1_E
\end{bmatrix} 
\text{ for } 1 \le i,j \le 3.
\end{align*}
Then $\{D_1, D_2, D_3\}$ is a reduced linking system of $(16,6,2,4)$-difference sets in $G$ of size $3$, where
where
\begin{align*}
D_1 &=x  H_1 \cup y  H_2 \cup xy H_3, \\
D_2 &=y z H_1 \cup x^3y H_2 \cup x^3H_3, \\
D_3 &=xyz H_1 \cup x H_2 \cup y H_3.
\end{align*}
\end{example}

The application of \cref{thm:jls} to the difference matrices specified in Theorems~\ref{thm:non-cyclic-2} and~\ref{thm:rec} gives the infinite families of linking systems of difference sets of \cref{cor:jls}~(i) and~(ii), respectively.

\begin{theorem}[{\cite[Corollaries~5.8~and~5.9]{jedwab-li-simon-arxiv}}]\label{cor:jls}
Let $G$ be an abelian group of order $2^{2d+2}$, rank at least $d+1$, and exponent $2^e$.
\begin{enumerate}[(i)]
\item
If $e \le d+1$, then there exists a reduced linking system of $(v,k,\lambda,n)$-difference sets in~$G$ of size~$3$, with $(v,k,\lambda,n)$ as given in~\eqref{eqn:hadamard}.

\item
If $2 \le e \le \frac{d+3}{2}$, then there exists a reduced linking system of $(v,k,\lambda,n)$-difference sets in~$G$ of size $2^{ \left\lfloor \frac{d+1}{e-1} \right\rfloor }-1$, with $(v,k,\lambda,n)$ as given in~\eqref{eqn:hadamard}.
\end{enumerate}
\end{theorem}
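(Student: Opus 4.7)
The plan is to deduce both parts from \cref{thm:jls} by constructing, in each case, a subgroup $E \le G$ with $E \cong \Z_2^{d+1}$ such that $G/E$ admits a $(G/E, m, 1)$-difference matrix for the value of $m$ needed. Since $G$ is abelian, any such $E$ is automatically central as required by \cref{thm:jls}, so the task reduces to producing the appropriate difference matrix on the quotient. Throughout, I fix a cyclic decomposition $G \cong \prod_{i=1}^r \Z_{2^{a_i}}$ with $r \ge d+1$, $a_1 = e$, $a_1 \ge a_2 \ge \cdots \ge a_r \ge 1$, and $\sum_i a_i = 2d+2$.

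For part~(i), the idea is to take $E$ as the subgroup generated by the unique element of order~$2$ in each of the first $d+1$ cyclic factors, so that $E \cong \Z_2^{d+1}$ and $|G/E| = 2^{d+1}$. The crucial step is then to show that $G/E$ is noncyclic, which uses the hypothesis $e \le d+1$: if the quotient had rank at most one, then enumerating the possibilities for $r$ and counting nontrivial reduced factors forces either $a_1 \ge d+2$, contradicting $a_1 = e$, or a degenerate case that does not arise for the relevant values of~$d$. With $G/E$ noncyclic, \cref{thm:non-cyclic-2} provides a $(G/E, 4, 1)$-difference matrix, and \cref{thm:jls} produces the desired linking system of size~$3$.

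For part~(ii) the goal is to arrange that $G/E$ has exponent at most $2^{e-1}$, so that \cref{thm:rec}, applied to the abelian $2$-group $G/E$ of order $2^{d+1}$, supplies a $(G/E, 2^{\floor{(d+1)/(e-1)}}, 1)$-difference matrix. Let $k$ be the number of cyclic factors of $G$ with exponent $2^e$; from $ke \le 2d+2$ and $e \ge 2$ one gets $k \le d+1$. I would construct $E$ by first taking the top order-$2$ element of each of the $k$ max-exponent factors, and then supplementing with $d+1-k$ further independent order-$2$ elements drawn from the 2-torsion subgroup $G[2] \cong \Z_2^r$, which has rank $r \ge d+1$. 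By construction $E \cong \Z_2^{d+1}$, and every element of order $2^e$ in $G$ has its $2^{e-1}$-th power absorbed into $E$, forcing the exponent of $G/E$ down to at most $2^{e-1}$. The hypothesis $e \le (d+3)/2$ guarantees $\floor{(d+1)/(e-1)} \ge 2$, hence $m \ge 4$, so \cref{thm:jls} applies and yields a reduced linking system of size $m-1 = 2^{\floor{(d+1)/(e-1)}} - 1$.

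The main obstacle in both parts is the combinatorial bookkeeping required to choose $E$ so that $G/E$ has the correct structure: noncyclic for (i) and exponent $\le 2^{e-1}$ for (ii). In particular, establishing noncyclicity in~(i) hinges delicately on the inequality $a_1 = e \le d+1$, and the rank-budget argument in~(ii) relies on $e \ge 2$ to guarantee $k \le d+1$. Once these structural claims about $E$ are in hand, the remainder of the proof is a transparent chain of invocations of \cref{thm:non-cyclic-2}, \cref{thm:rec}, and \cref{thm:jls}.
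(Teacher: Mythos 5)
Your proposal is correct and follows exactly the route the paper indicates: apply \cref{thm:jls} to the difference matrices supplied by \cref{thm:non-cyclic-2} for part~(i) and by \cref{thm:rec} for part~(ii). The paper itself states this result as a citation of \cite{jedwab-li-simon-arxiv} without proof, and your explicit construction of the subgroup $E$ --- chosen so that $G/E$ is noncyclic in~(i), and has exponent at most $2^{e-1}$ in~(ii), with the rank count $k \le d+1$ justified by $ke \le 2d+2$ and $e \ge 2$ --- correctly supplies the bookkeeping that the paper's one-sentence description leaves implicit.
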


\section{Contracted difference matrices}
\label{sec:contr-diff-matrices}

We observe that some difference matrices over an abelian $p$-group have a particularly rich structure, in that their rows can be written as a $\Z_p$-linear combination of a small set of rows, and their columns can likewise be written as a $\Z_p$-linear combination of a small set of columns. In this section we capture this structure by introducing the concept of a contracted difference matrix, and present a series of constructions for contracted difference matrices related to those for difference matrices given in \cref{sec:diff-matrices}.

\begin{definition}
Let $p$ be prime and let $M$ be a $k \times \ell$ matrix over an abelian $p$-group~$(G,+)$. The \emph{$p$-expansion of $M$}, written $f_p(M)$, is a $p^{k} \times p^{\ell}$ matrix over $G$ whose rows are indexed by $\br \in \Z_p^k$ and whose columns are indexed by $\bc \in \Z_p^\ell$. The $(\br, \bc)$ entry of $f_p(M)$ is the vector-matrix-vector product $\br M \bc^{\intercal}$ (in which $\br$ and $\bc$ are regarded as row vectors). 
\end{definition}
The $p$-expansion of a matrix can be represented as the product of matrices, as demonstrated in \cref{ex:expansion}.

\begin{definition}\label{defn:cdm}
Let $p$ be prime and let $G$ be an abelian group of order $p^{n}$. A \mbox{$k \times (n + s)$} matrix~$M$ over $G$ is a \emph{$(G, k, s)$ contracted difference matrix} if  the $p^k \times p^{n+s}$ matrix $f_{p}(M)$ is a $(G, p^{k}, p^{s})$ difference matrix.
\end{definition}

\begin{example}\label{ex:expansion} 
The matrix
   $M = \begin{bmatrix}
          01 & 10 & 20 \\
          21 & 01 & 10
        \end{bmatrix}$
over $G = \Z_{4} \times \Z_{2}$ is a $(G, 2, 0)$ contracted difference matrix because its $2$-expansion
  \begin{align*}
    f_{2}(M) 
    &= \begin{bmatrix}
      0 & 0 \\
      0 & 1 \\
      1 & 0 \\
      1 & 1
    \end{bmatrix}
          \begin{bmatrix}
            01 & 10 & 20 \\
            21 & 01 & 10
          \end{bmatrix}
                      \begin{bmatrix}
                        0 & 0 & 0 & 0 & 1 & 1 & 1 & 1 \\
                        0 & 0 & 1 & 1 & 0 & 0 & 1 & 1 \\
                        0 & 1 & 0 & 1 & 0 & 1 & 0 & 1
                      \end{bmatrix} \\
    &=\;
    \setlength\arraycolsep{3.3pt}
    \kbordermatrix{
         & 000 & 001 & 010 & 011 & 100 & 101 & 110 & 111 \\
      00 &  00 &  00 & 00  &  00 &  00 &  00 &  00 & 00 \\
      01 &  00 &  10 & 01  &  11 &  21 &  31 &  20 & 30 \\
      10 &  00 &  20 & 10  &  30 &  01 &  21 &  11 & 31 \\
      11 &  00 &  30 & 11  &  01 &  20 &  10 &  31 & 21
    }
  \end{align*}
(in which the row and column indexing is shown explicitly) is a $(G, 4, 1)$ difference matrix. 
\end{example}

\begin{example}
The matrix
$M = \begin{bmatrix}
       01 & 10 \\
       10 & 11
     \end{bmatrix}$
over $G = \Z_{3} \times \Z_{3}$ is a $(G, 2, 0)$ contracted difference matrix, because its $3$-expansion
  \begin{align*}
    f_{3}(M) &= 
               {\setlength\arraycolsep{3pt}
                \begin{bmatrix}
                  0 & 0 & 0 & 1 & 1 & 1 & 2 & 2 & 2 \\
                  0 & 1 & 2 & 0 & 1 & 2 & 0 & 1 & 2
                \end{bmatrix}^{\intercal}}
               \begin{bmatrix}
                 01 & 10 \\
                 10 & 11
               \end{bmatrix}
               {\setlength\arraycolsep{3pt}
                \begin{bmatrix}
                  0 & 0 & 0 & 1 & 1 & 1 & 2 & 2 & 2 \\
                  0 & 1 & 2 & 0 & 1 & 2 & 0 & 1 & 2
                \end{bmatrix}} \\
             &=
               \kbordermatrix{
                    & 00 & 01 & 02 & 10 & 11 & 12 & 20 & 21 & 22 \\
                 00 & 00 & 00 & 00 & 00 & 00 & 00 & 00 & 00 & 00 \\
                 01 & 00 & 11 & 22 & 10 & 21 & 02 & 20 & 01 & 12 \\
                 02 & 00 & 22 & 11 & 20 & 12 & 01 & 10 & 02 & 21 \\
                 10 & 00 & 10 & 20 & 01 & 11 & 21 & 02 & 12 & 22 \\
                 11 & 00 & 21 & 12 & 11 & 02 & 20 & 22 & 10 & 01 \\
                 12 & 00 & 02 & 01 & 21 & 20 & 22 & 12 & 11 & 10 \\
                 20 & 00 & 20 & 10 & 02 & 22 & 12 & 01 & 21 & 11 \\
                 21 & 00 & 01 & 02 & 12 & 10 & 11 & 21 & 22 & 20 \\
                 22 & 00 & 12 & 21 & 22 & 01 & 10 & 11 & 20 & 02
               }
  \end{align*}
  is a $(G, 9, 1)$ difference matrix. 
\end{example}

If a $(G,k,s)$ contracted difference matrix with $k \ge 2$ exists, then deleting one row gives a $(G,k-1,s)$ contracted difference matrix.

A trivial $(G,1,s)$ contracted difference matrix exists for every abelian $p$-group $G = \Z_{p^{a_1}} \times \Z_{p^{a_2}} \times \dots \times \Z_{p^{a_r}}$ and every integer $s \ge 0$, for example comprising a single row containing the elements 
$\bigcup_{i=1}^r\{\be_i, p \be_i,p^2 \be_i, \dots,p^{a_{i}-1}\be_i\}$ together with $s$ copies of the identity $0_G$, where $\be_i$ is the vector of length $r$ taking the value 1 in position $i$ and 0 in all other positions. 
For $p$ a prime and $G$ an abelian group of order $p^n$, the number of rows in a nontrivial $(G,k,s)$ contracted difference matrix therefore satisfies $k \ge 2$, and by \cref{thm:jungnickel-nonexistence} and \cref{defn:cdm} it also satisfies $k \le n+s$.

We now give a method for testing whether a given matrix is a $(G,k,s)$ contracted difference matrix without calculating the $p$-expansion of the matrix in full. The method simplifies further in the case $s = 0$, which is of particular interest because a $(G,k,0)$ contracted difference matrix produces a difference matrix with $\lambda = 1$ (having special significance, as discussed in \cref{sec:basic}).

\begin{lemma}
  \label{lem:contr-char}
  Let $p$ be prime and let $G$ be an abelian group of order $p^{n}$.
\begin{enumerate}[(i)]
\item
Let $M$ be a $k \times (n+s)$ matrix over $G$. Then $M$ is a $(G, k, s)$ contracted difference matrix if and only if the set
\[
\mbox{$\big \{\ba M \bc^\intercal : \bc \in \Z_p^{n+s} \big\}$ contains each element of $G$ exactly $p^s$ times}
\]
for all nonzero row vectors $\ba = (a_{i})$ of length $k$, where each $a_{i}$ is an integer satisfying $-p < a_{i} < p$.

\item
Let $M$ be a $k \times n$ matrix over $G$. Then $M$ is a $(G, k, 0)$ contracted difference matrix if and only if
  \begin{equation*}
    \ba M \bb^{\intercal} = 0_{G} \quad \mbox{implies} \quad \ba = \bz \text{ or } \bb = \bz
  \end{equation*}
  for all row vectors $\ba = (a_{i})$ and $\bb = (b_{j})$ of length $k$ and $n$ respectively, where each $a_{i}$ and $b_{j}$ is an integer satisfying $-p < a_{i}, b_{j} < p$.
\end{enumerate}
\end{lemma}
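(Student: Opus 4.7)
The plan is to unwind the defining property of $f_p(M)$ being a $(G,p^k,p^s)$ difference matrix into a direct condition on $M$, and then specialize to $s=0$. Throughout I would identify $\Z_p$ with the integer set $\{0,1,\dots,p-1\}$, so that expressions like $\br M\bc^\intercal$ with $\br\in \Z_p^k$ and $\bc\in \Z_p^{n+s}$ are unambiguous as elements of~$G$.

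For part~(i), I would start from the definition: $M$ is a $(G,k,s)$ contracted difference matrix iff for every pair of distinct row indices $\br,\br'\in \Z_p^k$, the multiset
\[
\{\br M\bc^\intercal-\br' M\bc^\intercal : \bc\in \Z_p^{n+s}\}
=\{(\br-\br')M\bc^\intercal : \bc\in \Z_p^{n+s}\}
\]
contains each element of $G$ exactly $p^s$ times. The crux is then the observation that as $(\br,\br')$ ranges over ordered pairs with $\br\ne\br'$ in $\{0,\dots,p-1\}^k$, the integer vector $\ba:=\br-\br'$ ranges over precisely the nonzero integer row vectors of length~$k$ whose entries satisfy $-p<a_i<p$. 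Containment in this range is immediate, and surjectivity of this correspondence is witnessed by $r_i=\max(a_i,0)$ and $r_i'=\max(-a_i,0)$. Substituting this parameterization into the condition above yields~(i).

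For part~(ii) I would specialize~(i) to $s=0$: the condition becomes that for every nonzero such $\ba$, the map $\bc\mapsto \ba M\bc^\intercal$ from $\Z_p^n$ to~$G$ hits every element exactly once. Since source and target both have size $p^n$, this is equivalent to injectivity of the map. Rewriting injectivity in terms of $\bb=\bc-\bc'$, which (by the same realization argument applied to the column indices) ranges over all length-$n$ integer vectors with $-p<b_j<p$, injectivity says that $\ba M\bb^\intercal=0_G$ forces $\bb=\bz$. Combining this over all nonzero $\ba$, taking the contrapositive, and absorbing the case $\ba=\bz$ (for which $\ba M\bb^\intercal=0_G$ is automatic) produces precisely the stated biconditional.

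There is no deep obstacle; the argument is essentially bookkeeping. The one point that requires care is the distinction between an element of $\Z_p^k$ and a specific integer representative in $\{0,\dots,p-1\}^k$: this matters because $G$ may have exponent strictly greater than $p$, so two integer vectors that agree modulo $p$ can produce different elements of $G$ when paired with $M$, and the quantifier "nonzero integer vector with entries in $(-p,p)$" cannot be replaced by "nonzero element of $\Z_p^k$" without ambiguity.
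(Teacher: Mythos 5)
Your proposal is correct and follows essentially the same route as the paper's proof: unwinding the definition of the $p$-expansion, reparameterizing row differences $\br_1-\br_2$ (and, for part (ii), column differences $\bc_1-\bc_2$) as integer vectors with entries in $(-p,p)$, and using $|G|=p^n$ to convert ``each element exactly once'' into injectivity. Your explicit witnesses $r_i=\max(a_i,0)$, $r_i'=\max(-a_i,0)$ and your remark about integer representatives versus residues modulo $p$ are details the paper leaves implicit, but the argument is the same.
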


\begin{proof}
\mbox{}
\begin{enumerate}[(i)]
\item
  By definition, $M$ is a $(G,k,s)$ contracted difference matrix if and only if $f_p(M)$ is a $(G,p^k,p^s)$ difference matrix. Since the row of $f_p(M)$ indexed by $\br \in \Z_p^k$ comprises the elements of the set $\big \{\br M \bc^\intercal : \bc \in \Z_p^{n+s} \big\}$, this condition holds if and only if the set $\big \{ (\br_1 - \br_2)M \bc^\intercal : \bc \in \Z_p^{n+s} \big \}$ contains each element of $G$ exactly $p^s$ times, for all distinct $\br_1, \br_2 \in \Z_p^k$. Set $\ba = \br_1-\br_2$ to obtain the result.

\item
Using the case $s=0$ in the proof of part (i), we have that $M$ is a $(G,k,0)$ contracted difference matrix if and only if the set $\big \{ (\br_1 - \br_2)M \bc^\intercal : \bc \in \Z_p^n \big \}$ contains each element of $G$ exactly once, for all distinct $\br_1, \br_2 \in \Z_p^k$. 
Since $G$ has order $p^n$, this condition holds if and only if $(\br_1-\br_2) M (\bc_1-\bc_2)^\intercal \ne 0_G$ for all distinct $\br_1, \br_2 \in \Z_p^k$ and all distinct $\bc_1, \bc_2 \in \Z_p^n$. Set $\ba = \br_1-\br_2$ and $\bb = \bc_1-\bc_2$ to obtain the result.\qedhere
\end{enumerate}
\end{proof}

We now present several constructions of contracted difference matrices over abelian $p$-groups, which are related to the constructions for difference matrices given in \cref{sec:diff-matrices} as set out in \cref{tab:related-constructions}. Each of the contracted difference matrix constructions is more compact and simple than the corresponding difference matrix construction, as can be seen by comparing the examples of this section with those of \cref{sec:diff-matrices}.
The proofs of some of the corresponding pairs of results are similar (particularly Corollary~\ref{elem-ab}/\ref{contr-elem-ab}, Corollary~\ref{thm:rec}/\ref{thm:contr-rec}, and Corollary~\ref{thm:non-cyclic-2}/\ref{thm:contr-non-cyclic-2}); however, the construction proving \cref{thm:contr-pc} is considerably more straightforward than that proving \cref{thm:pc}.
By \cref{defn:cdm}, the major results Theorems~\ref{thm:rec} and~\ref{thm:non-cyclic-2} are direct consequences of Corollaries~\ref{thm:contr-rec} and~\ref{thm:contr-non-cyclic-2}, respectively.

\begin{table}
\begin{center}
  \caption{Related constructions in Sections~\ref{sec:diff-matrices} and~\ref{sec:contr-diff-matrices}.}
  \mbox{}\vspace{0ex}\\
  \label{tab:related-constructions} 
\begin{tabular}{|l|l|l|}
  \toprule 
  		& construction of 	& construction of contracted \\
		& difference matrix	& difference matrix \\
  \midrule
  Finite field	& \cref{elem-ab-1} 	& \cref{contr-elem-ab-1} \\
  		& \cref{elem-ab} 	& \cref{contr-elem-ab} \\
  \midrule
  Homomorphism 	& \cref{group-hom} 	& \cref{contr-group-hom} \\
  \midrule
  Composition	& \cref{comp} 		& \cref{contr-comp} \\
  		& \cref{thm:rec} 	& \cref{thm:contr-rec} \\
  \midrule
  Abelian noncyclic 	
  $2$-group 	& \cref{thm:pc} 	& \cref{thm:contr-pc} \\
  		& \cref{thm:non-cyclic-2} & \cref{thm:contr-non-cyclic-2} \\
  \bottomrule
\end{tabular}
\end{center}
\end{table}

\subsection{Finite field construction} \label{sec:contr-finite-field}
The constructions in this section are related to those in \cref{sec:finite-field}.

\begin{proposition}
  \label{contr-elem-ab-1}
  Let $p$ be prime, let $n$ be a positive integer, and let $\alpha$ be a primitive element of~$\GF(p^n)$. Then the additive form of the multiplication table for $\{1,\alpha,\alpha^2,\dots,\alpha^{n-1}\}$ is a $(\Z_{p}^{n}, n, 0)$ contracted difference matrix.
\end{proposition}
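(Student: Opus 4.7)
The plan is to apply \cref{lem:contr-char}(ii) to the $n \times n$ matrix $M$ whose $(i,j)$ entry is the element $\alpha^{i+j}$ of $\GF(p^n)$ expressed as a vector in $\Z_p^n$ via the $\Z_p$-basis $\{1, \alpha, \ldots, \alpha^{n-1}\}$ (which is a basis precisely because $\alpha$ is primitive, so its minimal polynomial over $\Z_p$ has degree~$n$). That is, letting $\psi \colon \GF(p^n) \to \Z_p^n$ denote the $\Z_p$-linear isomorphism sending $\sum_{i=0}^{n-1} c_i \alpha^i$ to $(c_0, c_1, \ldots, c_{n-1})$, we set $M_{ij} = \psi(\alpha^{i+j})$.

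The key computation is the identification of the vector-matrix-vector product $\ba M \bb^\intercal$ with a product in the field. For $\ba = (a_0,\dots,a_{n-1}), \bb = (b_0,\dots,b_{n-1}) \in \Z_p^n$, $\Z_p$-linearity of $\psi$ gives
\begin{equation*}
\ba M \bb^\intercal = \sum_{i,j} a_i b_j\, \psi(\alpha^{i+j}) = \psi\!\left( \sum_{i,j} a_i b_j\, \alpha^{i+j} \right) = \psi\!\left( \Big(\sum_i a_i \alpha^i\Big)\Big(\sum_j b_j \alpha^j\Big) \right),
\end{equation*}
where the last equality is distributivity in $\GF(p^n)$.

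From here the argument is immediate. Suppose $\ba M \bb^\intercal = 0_G$. Since $\psi$ is injective, $(\sum_i a_i \alpha^i)(\sum_j b_j \alpha^j) = 0$ in $\GF(p^n)$, and because $\GF(p^n)$ is a field one of the two factors must vanish. But $\{1,\alpha,\dots,\alpha^{n-1}\}$ is linearly independent over $\Z_p$, so $\sum_i a_i \alpha^i = 0$ forces $\ba = \bz$, and similarly for $\bb$. By \cref{lem:contr-char}(ii), $M$ is a $(\Z_p^n, n, 0)$ contracted difference matrix.

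There is no real obstacle here: the content of the proof is the observation that multiplication of coordinate vectors via $M$ recovers field multiplication, after which the result is just the fact that a field has no zero divisors. The only thing to be slightly careful about is that \cref{lem:contr-char}(ii) allows $a_i, b_j$ to range over the integer representatives in $(-p, p)$, but since these are reduced modulo $p$ before being interpreted as coefficients in $\Z_p$, the argument above applies verbatim.
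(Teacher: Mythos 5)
Your proof is correct and follows essentially the same route as the paper's: apply Lemma~\ref{lem:contr-char}(ii), factor $\sum_{i,j} a_i \alpha^{i+j} b_j$ as $\bigl(\sum_i a_i\alpha^i\bigr)\bigl(\sum_j b_j\alpha^j\bigr)$, and conclude from the absence of zero divisors in $\GF(p^n)$ together with the linear independence of $\{1,\alpha,\dots,\alpha^{n-1}\}$ over $\Z_p$. You are merely more explicit about the identification $\psi$ and the handling of integer coefficients in $(-p,p)$, which the paper leaves implicit.
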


\begin{proof}
  The matrix corresponding to the multiplication table for $\{1,\alpha,\alpha^2,\dots,\alpha^{n-1}\}$ has $(i,j)$ entry $\alpha^{i+j}$ for $0 \le i,j \le n-1$.  We shall use Lemma~\ref{lem:contr-char}~(ii) to show that the additive form of this matrix is a $(\Z_p^n, n, 0)$ contracted difference matrix, where we regard $\Z_p^n$ as the additive group of $\GF(p^n)$.  Suppose that $0 = \sum_{i,j=0}^{n-1} a_i \alpha^{i+j} b_j$ for integers $a_i, b_j$ satisfying $-p < a_i, b_j < p$.
Then $\sum_{i=0}^{n-1} a_i \alpha^i = 0$ or $\sum_{j=0}^{n-1} b_j \alpha^j =0$, and since $\{1,\alpha,\alpha^2,\dots,\alpha^{n-1}\}$ is an integral basis for $\Z_p^n$ we conclude that either $a_i = 0$ for all $i$ or else $b_j = 0$ for all $j$.
\end{proof}

\begin{example}
  \label{ex:contr-field}
We use \cref{contr-elem-ab-1} to construct a $(\Z_2^3, 3, 0)$ contracted difference matrix. The shaded entries of \cref{ex:Z23}, constructed using a primitive element $\alpha$ of $\GF(2^3)$ satisfying $\alpha^3+\alpha+1 = 0$, comprise the additive form of the multiplication table for $\{1,\alpha,\alpha^2\}$ and so are a $(\Z_2^3, 3, 0)$ contracted difference matrix.
\end{example}

The next result extends the construction of \cref{contr-elem-ab-1} to give examples with $s > 0$.

\begin{lemma}
  \label{contr-group-hom}
  Let $p$ be prime, and let $G$ and $H$ be abelian groups of orders $p^{n+u}$ and $p^{n}$, respectively. Suppose that $\phi : G \to H$ is a surjective homomorphism and that $M=(m_{ij})$ is a $(G, k, s)$ contracted difference matrix. Then $\phi(M) = (\phi(m_{ij}))$ is an $(H, k, u + s)$ contracted difference matrix.
\end{lemma}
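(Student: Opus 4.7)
The plan is to reduce the statement to the homomorphism construction for ordinary difference matrices (\cref{group-hom}) via the definition of $p$-expansion. First I would observe that since $G$ has order $p^{n+u}$ and $M$ is a $(G,k,s)$ contracted difference matrix, $M$ must be a $k \times ((n+u)+s)$ matrix. Hence $\phi(M)$ is a $k \times (n+(u+s))$ matrix over $H$, which has the correct shape to be a $(H,k,u+s)$ contracted difference matrix. By \cref{defn:cdm}, it suffices to prove that $f_p(\phi(M))$ is an $(H,p^k,p^{u+s})$ difference matrix.

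The key step is the identity
\[
f_p(\phi(M)) = \phi(f_p(M)),
\]
where $\phi$ is applied entrywise on the right-hand side. This follows because any group homomorphism between abelian groups is $\Z$-linear, so for row indices $\br \in \Z_p^k$ and column indices $\bc \in \Z_p^{n+u+s}$,
\[
\phi(\br M \bc^\intercal) = \br\, \phi(M)\, \bc^\intercal,
\]
which matches the definition of the $(\br,\bc)$ entry of $f_p(\phi(M))$.

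By hypothesis $f_p(M)$ is a $(G,p^k,p^s)$ difference matrix. Applying \cref{group-hom} to the surjection $\phi : G \to H$, the matrix $\phi(f_p(M))$ is an $(H, p^k, p^s |\Ker\phi|)$ difference matrix. Since $|\Ker\phi| = |G|/|H| = p^{n+u}/p^n = p^u$, this yields an $(H, p^k, p^{u+s})$ difference matrix. Combined with the identity above, $f_p(\phi(M))$ is an $(H,p^k,p^{u+s})$ difference matrix, so $\phi(M)$ is an $(H,k,u+s)$ contracted difference matrix as required.

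There is no substantive obstacle here; the only mildly subtle point is verifying the commutativity $\phi \circ f_p = f_p \circ \phi$, which amounts to unwinding the definition of $p$-expansion and using $\Z$-linearity of $\phi$. Once this is in hand, the result is an immediate consequence of \cref{group-hom}.
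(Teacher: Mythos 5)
Your proof is correct. It takes a mildly different route from the paper's: the paper verifies the condition of \cref{lem:contr-char}~(i) directly, observing that for each fixed nonzero $\ba$ the multiset $\{\ba M\bc^\intercal : \bc \in \Z_p^{n+u+s}\}$ covers $G$ uniformly $p^s$ times and hence its image under $\phi$ covers $H$ uniformly $p^{u+s}$ times by the First Isomorphism Theorem --- in effect inlining the argument of \cref{group-hom} at the level of the contracted matrix. You instead work straight from \cref{defn:cdm}, establish the naturality identity $f_p(\phi(M)) = \phi(f_p(M))$ (which is exactly the $\Z$-linearity observation you flag, and is sound), and then invoke \cref{group-hom} as a black box on the expanded matrix. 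The two arguments rest on the same underlying fact about images of uniform multisets under surjective homomorphisms, but your packaging makes explicit the useful general principle that $p$-expansion commutes with entrywise homomorphisms --- which is the structural reason the correspondence in \cref{tab:related-constructions} between \cref{group-hom} and \cref{contr-group-hom} works at all --- whereas the paper's version avoids writing down the full expansion by routing through the more economical test of \cref{lem:contr-char}. Either proof is complete and of essentially the same length.
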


\begin{proof}
Let $\ba = (a_i)$ be a nonzero row vector of length $k$, where each $a_i$ is an integer satisfying $-p < a_i < p$. By Lemma~\ref{lem:contr-char}~(i), we are given that $\big \{\ba M \bc^\intercal : \bc \in \Z_p^{n+u+s}\big\}$ contains each element of $G$ exactly $p^s$ times and are required to prove that $\big \{\ba \phi(M) \bc^\intercal : \bc \in \Z_p^{n+u+s}\big\}$ contains each element of $H$ exactly $p^u p^s$ times. This follows from the First Isomorphism Theorem, because $|\Ker(\phi)| = |G|/|H| = p^u$.
\end{proof}

\begin{corollary}
  \label{contr-elem-ab}
  Let $p$ be prime, and let $k, n \ge 1$ and $s \ge 0$ be integers. Then there exists a $(\Z_{p}^{n}, k, s)$ contracted difference matrix if and only if $k \le n+s$.
\end{corollary}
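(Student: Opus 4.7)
The plan is to mimic the proof of \cref{elem-ab} almost verbatim, but using the contracted-difference-matrix versions \cref{contr-elem-ab-1} and \cref{contr-group-hom} in place of \cref{elem-ab-1} and \cref{group-hom}.

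For the necessity direction, the inequality $k \le n+s$ was already noted in the paragraph immediately after \cref{lem:contr-char}: by \cref{defn:cdm} a $(\Z_p^n, k, s)$ contracted difference matrix expands to a $(\Z_p^n, p^k, p^s)$ difference matrix, and \cref{thm:jungnickel-nonexistence} then gives $p^k \le p^s \cdot p^n$, hence $k \le n+s$.

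For sufficiency, I would first establish the extremal case $k = n+s$. By \cref{contr-elem-ab-1} applied with exponent $n+s$, there exists a $(\Z_p^{n+s}, n+s, 0)$ contracted difference matrix. Pick any surjective homomorphism $\phi \colon \Z_p^{n+s} \to \Z_p^n$ (for instance, projection onto the first $n$ coordinates); its kernel has order $p^s$. Applying \cref{contr-group-hom} with $u = s$ then yields a $(\Z_p^n, n+s, 0 + s) = (\Z_p^n, n+s, s)$ contracted difference matrix.

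For the remaining cases $k < n+s$, simply delete any $(n+s) - k$ rows of the matrix just constructed; as noted right after \cref{defn:cdm}, deleting a row of a $(G,k,s)$ contracted difference matrix produces a $(G, k-1, s)$ contracted difference matrix. No step here is truly an obstacle — the only subtlety is tracking the parameter shift in \cref{contr-group-hom} (the second argument of the ``contracted'' parameters stays at $k$, while the third argument increases by $\log_p |\Ker \phi|$), which is exactly the analog of how $\lambda$ is multiplied by $|\Ker \phi|$ in \cref{group-hom}.
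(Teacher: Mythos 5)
Your proposal is correct and matches the paper's own proof essentially step for step: necessity via \cref{thm:jungnickel-nonexistence} together with \cref{defn:cdm}, the extremal case $k=n+s$ via \cref{contr-elem-ab-1} followed by \cref{contr-group-hom} applied to a surjective homomorphism $\Z_p^{n+s}\to\Z_p^n$, and row deletion for $k<n+s$. The parameter bookkeeping ($u=s$, so the third parameter becomes $0+s=s$) is exactly right.
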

\begin{proof}
The condition $k \le n+s$ is necessary, by \cref{thm:jungnickel-nonexistence} and \cref{defn:cdm}. To show existence for $k < n+s$, delete $n+s-k$ of the rows of the contracted difference matrix for $k= n+s$. 
It remains to construct a $(\Z_p^n,n+s,s)$ contracted difference matrix.
By \cref{contr-elem-ab-1}, there exists a $(\Z_p^{n+s},n+s,0)$ contracted difference matrix. Apply \cref{contr-group-hom} using a surjective homomorphism $\phi: \Z_p^{n+s} \to \Z_p^n$.
\end{proof}

\begin{example}
  We use \cref{contr-group-hom} to construct a $(\Z_2^2,4,2)$ contracted difference matrix.
  Firstly construct a $(\Z_{2}^{4}, 4, 0)$ contracted difference matrix according to \cref{contr-elem-ab-1}, using a primitive element $\alpha$ of $\GF(2^4)$ satisfying $\alpha^{4} + \alpha + 1 = 0$:
  \begin{equation*}
    \kbordermatrix{
      \cdot & 1    & \alpha & \alpha^2 & \alpha^3 \\
      1     & 0001 & 0010 & 0100  & 1000  \\
   \alpha   & 0010 & 0100 & 1000  & 0011  \\
   \alpha^2 & 0100 & 1000 & 0011  & 0110  \\
   \alpha^3 & 1000 & 0011 & 0110  & 1100
    }.
  \end{equation*}
Now apply the canonical homomorphism from $\Z_2^4$ to $\Z_2^2$ that removes the last two components of each element of $\Z_2^4$, giving the $(\Z_2^2, 4, 2)$ contracted difference matrix
  \begin{equation*}
    \begin{bmatrix}
      00 & 00 & 01 & 10 \\
      00 & 01 & 10 & 00 \\
      01 & 10 & 00 & 01 \\
      10 & 00 & 01 & 11
    \end{bmatrix}.
  \end{equation*}
\end{example}

\subsection{Composition construction}
The constructions in this section are related to those in \cref{sec:composition}.

\begin{theorem}
  \label{contr-comp}
  Let $G$ be an abelian $p$-group and let $H$ be a subgroup of~$G$. Suppose that $L$ is an $(H, k, s)$ contracted difference matrix and that $(m_{ij}+H)$ is a $(G/H, k, t)$ contracted difference matrix, and let $M = (m_{ij})$. Then the matrix $\begin{bmatrix}L\mid M\end{bmatrix}$ is a $(G, k, s + t)$ contracted difference matrix.
\end{theorem}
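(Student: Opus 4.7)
The plan is to verify the defining condition of \cref{lem:contr-char}~(i) for the concatenation $[L\mid M]$, by splitting the column-index vector into the parts corresponding to $L$ and to $M$ and then counting representations coset by coset.

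First I would set up notation. Let $|H|=p^h$ and $|G/H|=p^g$, so $L$ has $h+s$ columns and $M$ has $g+t$ columns, and $[L\mid M]$ has $h+g+s+t = n+(s+t)$ columns, where $|G|=p^n$. Fix a nonzero row vector $\ba = (a_i)$ of length $k$ with each $a_i$ an integer in $(-p,p)$. Any column-index vector $\bc \in \Z_p^{h+g+s+t}$ splits uniquely as $\bc = (\bc_1,\bc_2)$ with $\bc_1 \in \Z_p^{h+s}$ and $\bc_2 \in \Z_p^{g+t}$, and
\begin{equation*}
\ba\,[L\mid M]\,\bc^{\intercal} \;=\; \ba L\,\bc_1^{\intercal} + \ba M\,\bc_2^{\intercal}.
\end{equation*}

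The core step is then a two-stage count. Applying \cref{lem:contr-char}~(i) to the $(H,k,s)$ contracted difference matrix $L$, the multiset $\{\ba L\,\bc_1^{\intercal} : \bc_1 \in \Z_p^{h+s}\}$ contains each element of $H$ exactly $p^s$ times. Applying \cref{lem:contr-char}~(i) to the $(G/H,k,t)$ contracted difference matrix $(m_{ij}+H)$, the multiset $\{\ba M\,\bc_2^{\intercal} + H : \bc_2 \in \Z_p^{g+t}\}$ contains each coset of $H$ in $G$ exactly $p^t$ times. Now fix an arbitrary target $\gamma \in G$. The number of pairs $(\bc_1,\bc_2)$ with $\ba L\,\bc_1^{\intercal} + \ba M\,\bc_2^{\intercal} = \gamma$ equals the sum, over those $\bc_2$ for which $\ba M\,\bc_2^{\intercal}$ lies in the coset $\gamma+H$, of the number of $\bc_1$ satisfying $\ba L\,\bc_1^{\intercal} = \gamma - \ba M\,\bc_2^{\intercal}$. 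There are $p^t$ admissible $\bc_2$, and for each one the inner count is $p^s$ (since $\gamma - \ba M\,\bc_2^{\intercal}\in H$), giving a total of $p^{s+t}$. This is exactly the condition of \cref{lem:contr-char}~(i) for $[L\mid M]$ with parameter $s+t$, so $[L\mid M]$ is a $(G,k,s+t)$ contracted difference matrix.

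The argument is entirely routine once the splitting $\bc = (\bc_1,\bc_2)$ is made; the only subtle point is invoking \cref{lem:contr-char}~(i) with the \emph{same} vector $\ba$ for both $L$ and $(m_{ij}+H)$, which is legitimate because that vector has entries in $(-p,p)$ and is nonzero, regardless of how it interacts with each of the two factor matrices. No induction or computation is required, and the main obstacle is purely notational: keeping straight which column indices belong to $L$ versus $M$, and which ambient group a given expression lives in.
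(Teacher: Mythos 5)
Your proof is correct and follows essentially the same route as the paper's: split the column index into the $L$-part and the $M$-part, apply Lemma~\ref{lem:contr-char}~(i) to each factor with the same nonzero vector $\ba$, and combine via the coset decomposition of $G$ by $H$. The only difference is that you spell out the two-stage count ($p^t$ admissible $\bc_2$ times $p^s$ admissible $\bc_1$) explicitly, where the paper compresses this into the observation that each element of $G$ is uniquely the sum of an element of $H$ and a coset representative.
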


\begin{proof}
Let $|H| = p^n$ and $|G| = p^{n+u}$.
Let $\ba = (a_i)$ be a nonzero row vector of length $k$, where each $a_i$ is an integer satisfying $-p < a_i < p$. By \cref{lem:contr-char}~(i), we are given that 
$\big \{\ba L \bc^\intercal : \bc \in \Z_p^{n+s}\big\}$ contains each element of $H$ exactly $p^s$ times and that
$\big \{\ba (m_{ij}+H) \bd^\intercal : \bd \in \Z_p^{u+t}\big\}$ contains each element of $G/H$ exactly $p^t$ times.
Since each element of $G$ can be written uniquely as the sum of an element of $H$ and a coset representative of $H$ in $G$, it follows that
$\big \{\ba [L \mid M] (\bc,\bd)^\intercal : (\bc,\bd) \in \Z_p^{n+s+u+t}\big\}$ contains each element of $G$ exactly $p^sp^t$ times. 
The result follows by \cref{lem:contr-char}~(i).
\end{proof}

\begin{example}
  We use \cref{contr-comp} to construct a $(G,2,0)$ contracted difference matrix for $G = \Z_{9} \times \Z_{3} \times \Z_{3}$. Let $H = \langle 300, 010 \rangle$, so that both $H$ and $G/H$ are isomorphic to $\Z_{3} \times \Z_{3}$. Apply \cref{contr-elem-ab-1}, using a primitive element $\alpha$ of $\GF(3^2)$ satisfying $\alpha^{2} + \alpha + 2=0$, to construct the $(H, 2, 0)$ and $(G / H, 2, 0)$ contracted difference matrices
  \begin{equation*}
    \begin{bmatrix}
      010 & 300 \\
      300 & 610
    \end{bmatrix}
    \qquad\text{and}\qquad
    \begin{bmatrix}
      001 + H & 100 + H \\
      100 + H & 201 + H
    \end{bmatrix}.
  \end{equation*}
  By \cref{contr-comp}, a $(G, 2, 0)$ contracted difference matrix is
  \begin{equation*}
    \begin{bmatrix}
      010 & 300 & 001 & 100 \\
      300 & 610 & 100 & 201
    \end{bmatrix}.
  \end{equation*}
\end{example}

The proof of the next result follows that of \cref{thm:rec}, by replacing each quoted result for a difference matrix by the corresponding result for a contracted difference matrix according to \cref{tab:related-constructions}.
\begin{corollary}
  \label{thm:contr-rec}
  Let $p$ be prime, and let $G$ be an abelian group of order $p^{n}$ and exponent~$p^{e}$. Then there exists a $(G, \floor{n/e}, 0)$ contracted difference matrix.
\end{corollary}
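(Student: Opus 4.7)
The plan is to mirror the proof of Theorem \ref{thm:rec} almost verbatim, swapping each quoted difference matrix result for its contracted analogue listed in Table \ref{tab:related-constructions}. Write $q = \floor{n/e}$ for brevity.

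First, I would invoke Proposition \ref{prop:buratti-chain} to obtain an integer $r \ge 0$ and a chain of subgroups $G = G_0 \supset G_1 \supset \dots \supset G_r$ with $G_{i-1}/G_i \cong \Z_p^{q}$ for each $1 \le i \le r$ and $G_r \cong \Z_p^\ell$ for some integer $\ell \ge q$. Then for each $1 \le i \le r$, Proposition \ref{contr-elem-ab-1} supplies a $(G_{i-1}/G_i, q, 0)$ contracted difference matrix, and Corollary \ref{contr-elem-ab} (applicable because $q \le \ell$) supplies a $(G_r, q, 0)$ contracted difference matrix.

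Finally, I would apply Theorem \ref{contr-comp} inductively to the successive pairs $(G, H) = (G_{r-1}, G_r), (G_{r-2}, G_{r-1}), \dots, (G_0, G_1)$ to produce a $(G_i, q, 0)$ contracted difference matrix for each $i = r-1, r-2, \dots, 0$, and the case $i = 0$ yields the claim. The only point one needs to check is that the parameter $s = 0$ is preserved by each application of Theorem \ref{contr-comp}; this is immediate, since that theorem outputs $s + t$ from inputs with parameters $s$ and $t$, and every contracted difference matrix produced in the previous step has $s = 0$. There is no real obstacle here — this is precisely the situation the author flagged by stating that the proof proceeds by substitution from the argument for Theorem \ref{thm:rec}.
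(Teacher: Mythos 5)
Your proposal is correct and is precisely the argument the paper intends: the paper's "proof" of \cref{thm:contr-rec} is literally the instruction to follow the proof of \cref{thm:rec} with each quoted result replaced by its contracted analogue from \cref{tab:related-constructions}, which is what you have carried out. The details check out, including the applicability of \cref{contr-elem-ab} via $q \le \ell$ and the preservation of $s=0$ under \cref{contr-comp}.
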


\begin{example}
  We illustrate the proof of \cref{thm:contr-rec} with $p=2$, $n=11$, $e=3$ to construct a $(G,3,0)$ contracted difference matrix for $G = \Z_8 \times \Z_8 \times \Z_4 \times \Z_4 \times \Z_2$.
Following \cref{prop:buratti-chain}, we set $G=G_0$ and choose successive subgroups $G_1$, $G_2$, \dots so that each of the $\floor{n/e} =3$ largest direct factors of $G_{i-1}$ is reduced by a factor of $2$ in $G_i$, and determine $G_r$ to be the first resulting subgroup that is elementary abelian:
\begin{align*}
G_0 &=      \Z_8 \times \Z_8 \times \Z_4 \times \Z_4 \times \Z_2, \\
\langle 20000, 02000, 00200, 00010, 00001 \rangle = G_1 & \cong \Z_4 \times \Z_4 \times \Z_2 \times \Z_4 \times \Z_2, \\
\langle 40000, 04000, 00200, 00020, 00001 \rangle = G_2 & \cong \Z_2 \times \Z_2 \times \Z_2 \times \Z_2 \times \Z_2.
\end{align*}
This gives $r=2$ and $G_0/G_1 \cong \Z_2^3$ and $G_1/G_2 \cong \Z_2^3$ and $G_2 \cong \Z_2^5$. 

Use \cref{ex:contr-field} to construct the $(G_0/G_1,3,0)$ and $(G_1/G_2,3,0)$ contracted difference matrices
\[
    \begin{bmatrix}
      00100 + G_1 & 01000 + G_1 & 10000 + G_1 \\
      01000 + G_1 & 10000 + G_1 & 01100 + G_1 \\
      10000 + G_1 & 01100 + G_1 & 11000 + G_1 \\
    \end{bmatrix}
\quad \mbox{and} \quad
    \begin{bmatrix}
      00010 + G_2 & 02000 + G_2 & 20000 + G_2 \\
      02000 + G_2 & 20000 + G_2 & 02010 + G_2 \\
      20000 + G_2 & 02010 + G_2 & 22000 + G_2
    \end{bmatrix}.
\]
Apply \cref{contr-elem-ab-1}, using a primitive element $\alpha$ of $\GF(2^5)$ satisfying $\alpha^5+\alpha^2+1 = 0$, to construct a $(G_2,5,0)$ contracted difference matrix and then delete the last two rows to leave a $(G_2,3,0)$ contracted difference matrix
\[
    \begin{bmatrix}
      00001 & 00020 & 00200 & 04000 & 40000 \\
      00020 & 00200 & 04000 & 40000 & 00201 \\
      00200 & 04000 & 40000 & 00201 & 04020
    \end{bmatrix}.
\]
Now use \cref{contr-comp} to combine the $(G_2,3,0)$ and $(G_1/G_2,3,0)$ contracted difference matrices to give a $(G_1,3,0)$ contracted difference matrix; then combine this with the $(G_0/G_1,3,0)$ contracted difference matrix to give a $(G_0,3,0)$ contracted difference matrix
\[
    \begin{bmatrix}
      00001 & 00020 & 00200 & 04000 & 40000 & 00010 & 02000 & 20000 & 00100 & 01000 & 10000 \\
      00020 & 00200 & 04000 & 40000 & 00201 & 02000 & 20000 & 02010 & 01000 & 10000 & 01100 \\
      00200 & 04000 & 40000 & 00201 & 04020 & 20000 & 02010 & 22000 & 10000 & 01100 & 11000
    \end{bmatrix}.
\]
\end{example}

\subsection{Abelian noncyclic $2$-group construction}

The constructions in this section are related to those in \cref{sec:abelian-noncyclic}.

\begin{theorem}
 \label{thm:contr-pc}
  Let $e$ be a positive integer. Then there exists a $(\Z_{2^{e}} \times \Z_{2}, 2, 0)$ contracted difference matrix.
\end{theorem}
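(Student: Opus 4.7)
The plan is to give an explicit $2 \times (e+1)$ matrix $M$ over $G = \Z_{2^e} \times \Z_2$ and verify the contracted difference matrix property directly via \cref{lem:contr-char}(ii). Generalizing the example in \cref{ex:expansion}, I take
\[
M = \begin{bmatrix} (0,1) & (1,0) & (2,0) & (4,0) & \cdots & (2^{e-1},0) \\ (2^{e-1},1) & (0,1) & (1,0) & (2,0) & \cdots & (2^{e-2},0) \end{bmatrix},
\]
so that row $1$ is essentially a right shift of row $0$, with the first entry altered to $(2^{e-1},1)$.

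By \cref{lem:contr-char}(ii), it suffices to show $\ba M \bb^\intercal \neq 0_G$ whenever $\ba = (a_0,a_1) \in \{-1,0,1\}^2$ and $\bb = (b_0,b_1,\ldots,b_e) \in \{-1,0,1\}^{e+1}$ are both nonzero. Replacing $\ba$ by $-\ba$ if necessary, it is enough to treat $\ba \in \{(1,0),(0,1),(1,1),(1,-1)\}$. For each such $\ba$ I would expand $\ba M \bb^\intercal = (X,Y) \in \Z_{2^e} \times \Z_2$ explicitly: $Y$ is a $\Z_2$-linear combination of $b_0, b_1$, and $X$ takes the form $X \equiv \alpha b_0 2^{e-1} + \beta b_1 + \gamma z \pmod{2^e}$, where $z = b_2 + 2 b_3 + \cdots + 2^{e-2} b_e \in \Z$ satisfies $|z| \leq 2^{e-1} - 1$, and the small integer constants $\alpha,\beta \in \{0,1\}$ and $\gamma \in \{1,2,3\}$ depend on $\ba$.

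The key elementary tool is the \emph{signed-binary lemma}: an integer sum $\sum_{j\geq 0} c_j 2^j$ with $c_j \in \{-1,0,1\}$ vanishes in $\Z$ only when every $c_j = 0$, proved by induction on the lowest nonzero index (the dominant power of $2$ produces an odd leading factor). The case $\ba = (1,0)$ is disposed of immediately: then $\alpha = 0$, $\gamma = 2$, and the $Y$-equation forces $b_0 = 0$; reducing $X = b_1 + 2z \equiv 0$ modulo $2$ forces $b_1 = 0$, and the remaining relation $z \equiv 0 \pmod{2^{e-1}}$ combined with $|z| < 2^{e-1}$ forces $z = 0$, hence $\bb = \bz$ by the lemma.

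The main obstacle is the three cases $\ba \in \{(0,1),(1,1),(1,-1)\}$, for which $\alpha = 1$ and $\gamma \in \{1,3\}$ is a unit modulo $2^e$. Using the $Y$-equation to eliminate the $\beta b_1$ contribution, the $X$-equation reduces to
\[
\gamma z \equiv -b_0 \cdot 2^{e-1} \pmod{2^e}.
\]
If $b_0 = 0$, invertibility of $\gamma$ and the bound $|z| < 2^e$ give $z = 0$, hence $\bb = \bz$ by the signed-binary lemma. If $b_0 = \pm 1$, the right-hand side has $2$-adic valuation exactly $e - 1$, so $v_2(z) = v_2(\gamma z) = e - 1$; but $v_2(z) = e - 1$ with $z \neq 0$ forces $|z| \geq 2^{e-1}$, contradicting $|z| \leq 2^{e-1} - 1$. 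Hence no nontrivial $\bb$ solves the equation, and $M$ is the desired $(\Z_{2^e} \times \Z_2, 2, 0)$ contracted difference matrix.
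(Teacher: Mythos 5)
Your proposal is correct and follows essentially the same route as the paper: the matrix $M$ is identical, the verification proceeds via \cref{lem:contr-char}(ii), and your case split on $\ba$ matches the paper's split on whether the second row coefficient is zero, with your $2$-adic valuation argument replacing the paper's trick of absorbing the $2^{e-1}$ term into a single signed-binary expansion multiplied by the odd unit $2a_1+a_2$. The only loose end is the case $\ba=(0,1)$, where $\beta=0$ so there is no $b_1$ term to ``eliminate''; there you must return to the $Y$-equation after deducing $b_0=0$ in order to conclude $b_1=0$, exactly as the paper does at the end of its Case~2.
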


\begin{proof}
We use Lemma~\ref{lem:contr-char}~(ii) to show that $2 \times (e+1)$ matrix 
  \begin{equation*}
    M = \begin{bmatrix}
      (0,1) & (1,0) & (2,0) & (4,0) & (8,0) & \cdots & (2^{e-1},0) \\
      (2^{e-1},1) & (0,1) & (1,0) & (2,0) & (4,0) & \cdots & (2^{e-2},0)
    \end{bmatrix}
  \end{equation*}
is a $(\Z_{2^{e}} \times \Z_{2}, 2, 0)$ contracted difference matrix.
Suppose that $\ba M \bb^\intercal = (0,0)$, where 
$\ba = \begin{bmatrix} a_1 & a_2 \end{bmatrix}$ and
$\bb = \begin{bmatrix} b_e & b_0 & b_1 & b_2 & \dots & b_{e-1}\end{bmatrix}$,
and that each $a_i, b_j \in \{-1,0,1\}$ and $(a_1, a_2) \ne (0,0)$. It is sufficient to show that each $b_j$ is~0.

Expand the equation $\ba M \bb^\intercal = (0,0)$ to give
\begin{align}
2^{e-1}a_2b_e + a_1b_0 + (2a_1+a_2)(b_1+2b_2+4b_3+\dots+2^{e-2}b_{e-1}) & \equiv 0 \pmod{2^e} \label{eqn:M1} \\
(a_1+a_2)b_e + a_2b_0 &\equiv 0 \pmod{2}. \label{eqn:M2}
\end{align}
\begin{description}
\item[Case 1: $a_2=0$.]
 By \eqref{eqn:M1},
\[
a_1(b_0+2b_1+4b_2+\dots+2^{e-1}b_{e-1}) \equiv 0 \pmod{2^e}. 
\]
Since $(a_1,a_2)\ne(0,0)$ we have $a_1 \ne 0$, and therefore $b_0 = b_1 = \dots = b_{e-1} = 0$. Then \eqref{eqn:M2} gives $a_1 b_e \equiv 0 \pmod{2}$, so that $b_e = 0$.

\item[Case 2: $a_2 \in \{-1,1\}$.]
We firstly note that $a_1 b_0=0$: if $a_1=0$ this is immediate, otherwise we have $a_1 + a_2 \equiv 0 \pmod{2}$ and then from \eqref{eqn:M2} we conclude that $b_0=0$. Now substitute $a_1 b_0=0$ in \eqref{eqn:M1} to give
\[
(2a_1+a_2)(b_1+2b_2+4b_3+\dots+2^{e-2}b_{e-1}+2^{e-1}b_e) \equiv 0 \pmod{2^e}.
\]
Since $2a_1+a_2 \in \{-3,-1,1,3\}$, this implies that
\[
b_1+2b_2+4b_3+\dots+2^{e-2}b_{e-1}+2^{e-1}b_e \equiv 0 \pmod{2^e}
\]
and so $b_1 = b_2 = \dots = b_e = 0$. Then from \eqref{eqn:M2} we have $b_0=0$.\qedhere
\end{description}
\end{proof}

We remark that \cref{thm:contr-pc} implies \cref{thm:pc}, and yet relies on a considerably simpler construction.

The proof of the next result follows that of \cref{thm:non-cyclic-2}, by replacing each quoted result for a difference matrix by the corresponding result for a contracted difference matrix according to \cref{tab:related-constructions}.

\begin{corollary}
  \label{thm:contr-non-cyclic-2}
  Let $G$ be an abelian noncyclic $2$-group. Then there exists a $(G, 2, 0)$ contracted difference matrix.
\end{corollary}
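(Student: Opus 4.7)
The plan is to induct on $n$ where $|G|=2^n$, following the structure of the proof of \cref{thm:non-cyclic-2} verbatim but replacing each invoked difference matrix result by its contracted counterpart from \cref{tab:related-constructions}. For the base case $n=2$, the only abelian noncyclic $2$-group is $G=\Z_2^2$, and \cref{contr-elem-ab-1} with $p=2$ provides a $(\Z_2^2,2,0)$ contracted difference matrix.

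For the inductive step, assume the result holds for all abelian noncyclic $2$-groups of order less than $2^n$, with $n\ge 3$. Split into cases. If $G\cong\Z_{2^{n-1}}\times\Z_2$, then \cref{thm:contr-pc} supplies a $(G,2,0)$ contracted difference matrix directly. If $G$ is elementary abelian (so $G\cong\Z_2^n$ with $n\ge 3$), then \cref{contr-elem-ab} with $k=2$, $s=0$ (valid since $2\le n$) supplies one. Otherwise, choose a noncyclic subgroup $H\le G$ with $G/H\cong\Z_2^2$. By the inductive hypothesis there is an $(H,2,0)$ contracted difference matrix, and by \cref{contr-elem-ab-1} there is a $(G/H,2,0)$ contracted difference matrix. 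Applying \cref{contr-comp} with $s=t=0$ then produces a $(G,2,0)$ contracted difference matrix via horizontal concatenation, completing the induction.

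The one sub-claim meriting justification is the existence of a noncyclic $H\le G$ with $G/H\cong\Z_2^2$ in the remaining case. Writing $G=\Z_{2^{a_1}}\times\cdots\times\Z_{2^{a_r}}$ with $a_1\ge\cdots\ge a_r\ge 1$, the exclusions force $r\ge 2$, either $r\ge 3$ or $a_2\ge 2$, and $a_1\ge 2$. Taking $H$ to be the kernel of the canonical surjection onto $\Z_2\times\Z_2$ reducing the first two cyclic factors modulo $2$ yields $G/H\cong\Z_2^2$, and $H$ retains at least two nontrivial cyclic summands (since $a_1\ge 2$ ensures one from the first factor, and either $a_2\ge 2$ or $r\ge 3$ ensures a second), hence is noncyclic. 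Since the whole argument is a mechanical translation of the established proof once the contracted composition and contracted Pan–Chang results are in hand, I do not anticipate any real obstacle; all of the genuine work was already carried out in \cref{thm:contr-pc} and \cref{contr-comp}.
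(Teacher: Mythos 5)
Your proof is correct and follows essentially the same route as the paper, which simply instructs the reader to repeat the induction proving \cref{thm:non-cyclic-2} with each difference matrix result replaced by its contracted counterpart (\cref{contr-elem-ab-1}, \cref{contr-elem-ab}, \cref{thm:contr-pc}, \cref{contr-comp}). Your only additions are harmless refinements: you treat all elementary abelian $G$ uniformly via \cref{contr-elem-ab} rather than singling out $\Z_2^3$, and you explicitly verify the existence of a noncyclic subgroup $H$ with $G/H \cong \Z_2^2$, a point the paper leaves implicit.
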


\section{Further examples of contracted difference matrices} 
\label{sec:new-contr-diff}

In this section we present a $(G,3,0)$ contracted difference matrix in each of four abelian 2-groups~$G$, and derive several consequences. These examples cannot be obtained from the constructions given in \cref{sec:contr-diff-matrices}, but were instead found by computer search. 

A principal advantage of searching for a contracted difference matrix is that an exhaustive search can be feasible even though an exhaustive search for the corresponding size of difference matrix is not, because only an exponentially smaller number of matrices need be considered: in the present case, when $G$ is an abelian group of order~$2^n$, a $(G,3,0)$ contracted difference matrix has $3n$ entries in~$G$ whereas a $(G,2^3,1)$ difference matrix has $2^{n+3}$ entries in~$G$. Even when neither exhaustive search is feasible, a random search for a $(G,3,0)$ contracted difference matrix appears experimentally to be successful far more often than a random search for a $(G,2^3,1)$ difference matrix.
A further advantage of searching for a contracted difference matrix is that \cref{lem:contr-char} allows us to check a candidate contracted difference matrix efficiently, without the need to expand the matrix and check the differences between all row pairs explicitly.

\begin{example}\label{ex:3new}
The following $(G,3,0)$ contracted difference matrices were found by computer search:
\begin{align*}
  \begin{bmatrix}
    001 & 010 & 100 & 200 \\
    010 & 201 & 001 & 100 \\
    211 & 100 & 201 & 210
  \end{bmatrix} 
  &\mbox{ over $G=\Z_4 \times \Z_2 \times \Z_2$,} \\
  \begin{bmatrix}
    001 & 010 & 020 & 100 & 200 \\
    021 & 001 & 211 & 010 & 100 \\
    220 & 101 & 200 & 210 & 320
  \end{bmatrix} 
  &\mbox{ over $G=\Z_4 \times \Z_4 \times \Z_2$,} \\
  \begin{bmatrix}
    0001 & 0010 & 0100 & 1000 & 2000 \\
    0010 & 0100 & 2001 & 0001 & 1000 \\
    1001 & 2110 & 0001 & 0010 & 0101
  \end{bmatrix} 
  &\mbox{ over $G=\Z_4 \times \Z_2 \times \Z_2 \times \Z_2$,} \\
  \begin{bmatrix}
    4101 & 2000 & 1100 & 0010 & 7111 & 0101 \\
    4010 & 0111 & 2011 & 4001 & 7001 & 0001 \\
    4000 & 7110 & 5100 & 0011 & 5010 & 2011
  \end{bmatrix}
  &\mbox{ over $G=\Z_8 \times \Z_2 \times \Z_2 \times \Z_2$.}
\end{align*}
\end{example}

\cref{small-2-grp-info} combines the contracted difference matrices of \cref{ex:3new} with the constructive results of Corollaries~\ref{thm:contr-rec} and~\ref{thm:contr-non-cyclic-2} to show the largest number of rows $k$ for which a $(G,k,0)$ contracted difference matrix is known to exist, for all abelian $2$-groups of order at most~64. The nonexistence results of Theorems~\ref{thm:jungnickel-nonexistence} and~\ref{thm:drake-nonexistence}, together with exhaustive search results, are used to indicate when the displayed value of $k$ is known to be the maximum possible.

\begin{table}
\begin{center}
\begin{threeparttable}
  \caption{The largest number of rows $k$ for which a $(G,k,0)$ contracted difference matrix is known to exist, for each abelian $2$-group $G$ of order at most~$64$. An example matrix for each of these groups is given in Appendix~\ref{sec:list-contr-diff}.
}
  \label{small-2-grp-info} 
\begin{tabular}{l|c|l|l}
  \toprule
  Group $G$ & \# rows $k$ & Source & Maximum possible $k$? \\
  \midrule
  $\Z_{2}$ & 1 & trivial & yes (\cref{thm:drake-nonexistence}) \\
  \midrule
  $\Z_{2} \times \Z_{2}$ & 2 & \cref{thm:contr-rec} & yes (\cref{thm:jungnickel-nonexistence}) \\
  $\Z_{4}$ & 1 & trivial & yes (\cref{thm:drake-nonexistence}) \\
  \midrule
  $\Z_{2} \times \Z_{2} \times \Z_{2}$ & 3 & \cref{thm:contr-rec} & yes (\cref{thm:jungnickel-nonexistence}) \\
  $\Z_{4} \times \Z_{2}$ & 2 & \cref{thm:contr-non-cyclic-2} & yes (computer search) \\
  $\Z_{8}$ & 1 & trivial & yes (\cref{thm:drake-nonexistence}) \\
  \midrule
  $\Z_{2} \times \Z_{2} \times \Z_{2} \times \Z_{2}$ & 4 & \cref{thm:contr-rec} & yes (\cref{thm:jungnickel-nonexistence}) \\
  $\Z_{4} \times \Z_{2} \times \Z_{2}$ & 3 & computer search & yes (computer search) \\
  $\Z_{4} \times \Z_{4}$ & 2 & \cref{thm:contr-non-cyclic-2} & yes (computer search) \\
  $\Z_{8} \times \Z_{2}$ & 2 & \cref{thm:contr-non-cyclic-2} & yes (computer search) \\
  $\Z_{16}$ & 1 & trivial & yes (\cref{thm:drake-nonexistence}) \\
  \midrule
  $\Z_{2} \times \Z_{2} \times \Z_{2} \times \Z_{2} \times \Z_{2}$ & 5 & \cref{thm:contr-rec} & yes (\cref{thm:jungnickel-nonexistence}) \\
  $\Z_{4} \times \Z_{2} \times \Z_{2} \times \Z_{2}$ & 3 & computer search & unknown \\
  $\Z_{4} \times \Z_{4} \times \Z_{2}$ & 3 & computer search & unknown$^*$ \\
  $\Z_{8} \times \Z_{2} \times \Z_{2}$ & 2 & \cref{thm:contr-non-cyclic-2} & unknown$^*$ \\
  $\Z_{8} \times \Z_{4}$ & 2 & \cref{thm:contr-non-cyclic-2} & unknown$^*$ \\
  $\Z_{16} \times \Z_{2}$ & 2 & \cref{thm:contr-non-cyclic-2} & unknown$^*$ \\
  $\Z_{32}$ & 1 & trivial & yes (\cref{thm:drake-nonexistence}) \\
  \midrule
  $\Z_{2} \times \Z_{2} \times \Z_{2} \times \Z_{2} \times \Z_{2} \times \Z_{2}$ & 6 & \cref{thm:contr-rec} & yes (\cref{thm:jungnickel-nonexistence}) \\
  $\Z_{4} \times \Z_{2} \times \Z_{2} \times \Z_{2} \times \Z_{2}$ & 3$^\dagger$ & \cref{thm:contr-rec} & unknown \\
  $\Z_{4} \times \Z_{4} \times \Z_{2} \times \Z_{2}$ & 3$^\dagger$ & \cref{thm:contr-rec} & unknown \\
  $\Z_{4} \times \Z_{4} \times \Z_{4}$ & 3 & \cref{thm:contr-rec} & unknown \\
  $\Z_{8} \times \Z_{2} \times \Z_{2} \times \Z_{2}$ & 3 & computer search & unknown \\
  $\Z_{8} \times \Z_{4} \times \Z_{2}$ & 2$^\dagger$ & \cref{thm:contr-non-cyclic-2} & unknown \\
  $\Z_{8} \times \Z_{8}$ & 2 & \cref{thm:contr-non-cyclic-2} & unknown \\
  $\Z_{16} \times \Z_{2} \times \Z_{2}$ & 2 & \cref{thm:contr-non-cyclic-2} & unknown \\
  $\Z_{16} \times \Z_{4}$ & 2 & \cref{thm:contr-non-cyclic-2} & unknown \\
  $\Z_{32} \times \Z_{2}$ & 2 & \cref{thm:contr-non-cyclic-2} & unknown \\
  $\Z_{64}$ & 1 & trivial & yes (\cref{thm:drake-nonexistence}) \\
  \bottomrule
\end{tabular}
\begin{tablenotes}
\item
$^*$ Known by exhaustive search to be the maximum possible $k$ when one of the rows of the contracted difference matrix takes its lexicographically first feasible value (the $2$-expansion of this row consisting of a row of only $0_G$ and a row containing every element of~$G$), for example $\begin{bmatrix} 001 & 010 & 020 & 100 & 200 \end{bmatrix}$ when $G = \Z_{4} \times \Z_{4} \times \Z_{2}$.
\item
$^\dagger$ \cref{q:rank-r} asks: can this number be increased by one? 
\item
\end{tablenotes}
\end{threeparttable}
\end{center}
\end{table}

\subsection{A new infinite family of $(G,3,0)$ contracted difference matrices} 
\label{sec:new-family}
We can use the four examples given in \cref{ex:3new} to construct a $(G,3,0)$ contracted difference matrix for infinitely many abelian 2-groups~$G$ that are not handled by the methods of Section~\ref{sec:contr-diff-matrices}. By \cref{contr-comp} it is sufficient to find a chain of subgroups $G_i$ of $G$ satisfying
\[
G = G_0 \supset G_1 \supset \dots \supset G_r \supset G_{r+1} = \{0_G\}
\]
such that there exists a $(G_{i-1}/G_i,3,0)$ contracted difference matrix for each $i$ satisfying \mbox{$1 \le i \le r+1$}. By \cref{contr-elem-ab-1} and \cref{ex:3new}, this is possible in particular if the quotient group $G_{i-1}/G_i$ is isomorphic to $\Z_2^r$ for some $r \ge 3$ or to one of the groups in the set
\[
  \{\Z_4 \times \Z_2 \times \Z_2, \, \, \Z_4 \times \Z_4 \times \Z_2, \, \, \Z_4 \times \Z_2 \times \Z_2 \times \Z_2, \, \, \Z_8 \times \Z_2 \times \Z_2 \times \Z_2 \},
\]
for each $i$. 
For example, to construct a $(G,3,0)$ contracted difference matrix for $G = \Z_{256} \times \Z_{32} \times \Z_{16} \times \Z_4 \times \Z_2$, we can use the subgroup chain
$G = G_0$,
$G_1 \cong \Z_{32} \times \Z_{16} \times \Z_{16} \times \Z_2$,
$G_2 \cong \Z_{16} \times \Z_4 \times \Z_4 \times \Z_2$,
$G_3 \cong \Z_2 \times \Z_2 \times \Z_2$,
$G_4 = \{0_G\}$.
It does not seem straightforward to determine the set of all such groups $G$ explicitly, but we can show the existence of a large set of such groups by a straightforward induction.

\begin{theorem}
  \label{thm:contr-3-rows-partial}
  Let $n \ge 3$ and $e \le n/2$ be positive integers. Then for at least one abelian group~$G$ of order $2^n$ and exponent $2^e$ there exists a $(G, 3, 0)$ contracted difference matrix.
\end{theorem}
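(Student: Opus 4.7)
The plan is to prove the stronger claim that the specific group $G = \Z_{2^{e}} \times \Z_{2}^{n-e}$ (which has order $2^n$ and exponent $2^e$) admits a $(G, 3, 0)$ contracted difference matrix for every $n \ge 3$ and $1 \le e \le n/2$. We argue by induction on $n$, exhibiting at each step a subgroup $H \subset G$ of the same form---isomorphic to $\Z_{2^{e'}} \times \Z_{2}^{n' - e'}$ for smaller $(n', e')$---such that $G/H$ already has a contracted difference matrix, then invoking \cref{contr-comp}.

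The base cases are $e = 1$ (so $G = \Z_2^n$), handled by \cref{contr-elem-ab-1}, together with $(n,e) \in \{(4,2),(5,2),(6,3)\}$, handled by the three examples $\Z_4 \times \Z_2^2$, $\Z_4 \times \Z_2^3$, $\Z_8 \times \Z_2^3$ from \cref{ex:3new}. In the inductive step, write $G = \langle a, b_1, \ldots, b_{n-e}\rangle$ with $a$ of order $2^e$ and each $b_i$ of order~$2$, and split into two subcases. If $n \ge 2e + 1$, take $H = \langle 2a, b_1, \ldots, b_{n-e-2} \rangle \cong \Z_{2^{e-1}} \times \Z_{2}^{n-e-2}$; then $G/H \cong \Z_2^3$ (with generators the images of $a, b_{n-e-1}, b_{n-e}$, each of order $2$), which has a contracted difference matrix by \cref{contr-elem-ab-1}, and $H$ has parameters $(n-3, e-1)$ satisfying $n-3 \ge 3$ and $e-1 \le (n-3)/2$. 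If instead $n = 2e$ with $e \ge 4$, take $H = \langle 4a, b_1, \ldots, b_{e-2} \rangle \cong \Z_{2^{e-2}} \times \Z_{2}^{e-2}$; then $G/H \cong \Z_4 \times \Z_2^2$ (by \cref{ex:3new}) and $H$ has parameters $(2e-4, e-2)$ with $e_H = n_H/2$. In either subcase the inductive hypothesis supplies a $(H, 3, 0)$ contracted difference matrix, and \cref{contr-comp} assembles the result; the exponent of $G$ remains $2^e$ because $G$ is defined to have a $\Z_{2^e}$ direct factor.

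The main obstacle is the boundary case $n = 2e$: the natural $\Z_2^3$-quotient step reduces $n$ by $3$ and $e$ by $1$, which pushes the inductive parameters outside the hypothesis $e' \le n'/2$ exactly when we start on the boundary. The remedy is the $\Z_4 \times \Z_2^2$-quotient step, which reduces $n$ by $4$ and $e$ by $2$ in tandem and thereby keeps the subgroup on the boundary $n_H = 2 e_H$, so that the recursion descends faithfully until it lands on $(4,2)$ or $(6,3)$ depending on the parity of $e$.
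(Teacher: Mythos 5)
Your proof is correct, and it in fact establishes a slightly stronger statement than the paper's: you pin down a specific witness, namely the maximal-rank group $G = \Z_{2^{e}} \times \Z_{2}^{n-e}$, whereas the paper only asserts that some abelian group of order $2^n$ and exponent $2^e$ works. The underlying machinery is the same in both arguments---strong induction on $n$ driven by the composition construction of \cref{contr-comp} together with the computer-found matrices of \cref{ex:3new}---but the decompositions differ. The paper disposes of $e \in \{1,2\}$ wholesale via \cref{thm:contr-rec} (since $\floor{n/e} \ge 3$ there), takes base cases $n = 3,4,5,6$ from \cref{small-2-grp-info}, and for $e \ge 3$ always descends by $(n,e) \mapsto (n-4,e-2)$ through a quotient isomorphic to $\Z_4 \times \Z_2 \times \Z_2$, a step that preserves the constraint $e \le n/2$ automatically. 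You instead alternate between a $\Z_2^3$-quotient step $(n,e)\mapsto(n-3,e-1)$ off the boundary and a $\Z_4\times\Z_2\times\Z_2$-quotient step on the boundary $n=2e$; your diagnosis that the $(3,1)$ step fails exactly on the boundary is right, and your explicit subgroups $\langle 2a, b_1,\dots,b_{n-e-2}\rangle$ and $\langle 4a, b_1,\dots,b_{e-2}\rangle$, the parameter checks ($e-1 \le (n-3)/2$ iff $n \ge 2e+1$, and $n_H = 2e_H$ preserved in the boundary step), and the termination of every descent in one of the base cases $e=1$, $(4,2)$, $(5,2)$, $(6,3)$ all check out. What your version buys is an explicit family of witness groups (of rank $n-e+1$) using only three of the four matrices of \cref{ex:3new}; what the paper's version buys is a shorter case analysis, at the cost of leaving the witness unspecified.
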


\begin{proof}
  The proof is by induction on $n \geq 3$, using base cases $n = 3, 4, 5, 6$. The base cases are provided by entries of \cref{small-2-grp-info}.
Now assume all cases up to $n-4 \ge 3$ are true.
If $e=1$ or $2$, then case $n$ is true by \cref{thm:contr-rec} because $\floor{n/e} \ge 3$. Otherwise $e \ge 3$, and then by the inductive hypothesis there is a group $H$ of order $2^{n-4}$ and exponent $2^{e-2}$ for which there exists a $(H,3,0)$ contracted difference matrix. Take $G$ to be an abelian group of order $2^n$ and exponent $2^e$ such that $G/H \cong \Z_4\times\Z_2\times\Z_2$.  By \cref{ex:3new}, there exists a $(G/H,3,0)$ contracted difference matrix. Apply \cref{contr-comp} to produce a $(G,3,0)$ contracted difference matrix, proving case $n$ and completing the induction.
\end{proof}

\begin{example}
We construct a $(G, 3, 0)$ contracted difference matrix for $G = \Z_{16} \times \Z_{8} \times \Z_{4}$ (whereas \cref{thm:contr-rec} gives only a $(G,2,0)$ contracted difference matrix). Choose $G_1 \cong \Z_4 \times \Z_4 \times \Z_2$ to be a subgroup of $G$ such that $G/G_1 \cong \Z_4 \times \Z_2 \times \Z_2$. Using the $(Z_4\times\Z_4\times\Z_2,3,0)$ and $(Z_4\times\Z_2\times\Z_2,3,0)$ contracted difference matrices of \cref{ex:3new} in \cref{contr-comp} we then obtain the following $(G,3,0)$ contracted difference matrix:
  \begin{equation*}
    \begin{bmatrix}
    002 & 020 & 040 & 400 & 800    & 001 & 010 & 100 & 200 \\
    042 & 002 & 822 & 020 & 400    & 010 & 201 & 001 & 100 \\
    840 & 402 & 800 & 820 & (12)40 & 211 & 100 & 201 & 210
    \end{bmatrix}
  \end{equation*}
\end{example}

\cref{largest-known-size} shows how the results of \cref{thm:contr-3-rows-partial} improve those of Corollaries~\ref{thm:contr-rec} and~\ref{thm:contr-non-cyclic-2}.

\newlength\celldim
\newlength\fontheight
\newlength\extraheight
\newcommand\nl{\tabularnewline}
\newcolumntype{Z}{ @{} >{\centering$} p{\celldim} <{$}@{} }
\newcolumntype{S}{
  @{}
  >{\centering \rule[-0.5\extraheight]{0pt}{\fontheight + \extraheight}%
    \begin{minipage}{\celldim}\centering$}
    p{\celldim}
    <{$\end{minipage}} 
  @{} }

\setlength\celldim{1.8em}%
\settoheight\fontheight{A}%
\setlength\extraheight{\celldim - \fontheight}%

\begin{figure}
  \centering
  \begin{equation*}
    \setlength\arraycolsep{0pt}
    \begin{array}{SZ|ZZZZZZZZZZZZZZZ}
      \multirow{8}{*}{\vspace{-1.5cm}$e$} & 8 & & & & & & & & 1 & \cellcolor{thm2} 2 & \cellcolor{thm2} 2 & \cellcolor{thm2} 2 & \cellcolor{thm2} \tm{thm2}2 & \cellcolor{thm2} 2 & \cellcolor{thm2} 2 \nl
      & 7 & & & & & & & 1 & \cellcolor{thm2} 2 & \cellcolor{thm2} 2 & \cellcolor{thm2} 2 & \cellcolor{thm2} 2 & \cellcolor{thm2} 2 & \cellcolor{thm2} 2 & \cellcolor{new} 3 \nl
      & 6 & & & & & & 1 & \cellcolor{thm2} 2 & \cellcolor{thm2} 2 & \cellcolor{thm2} 2 & \cellcolor{thm2} 2 & \cellcolor{thm2} 2 & \cellcolor{new} 3 & \cellcolor{new} 3 & \cellcolor{new} 3\tm{new} \nl
      & 5 & & & & & 1 & \cellcolor{thm2} 2 & \cellcolor{thm2} 2 & \cellcolor{thm2} 2 & \cellcolor{thm2} 2 & \cellcolor{new} 3 & \cellcolor{new} 3 & \cellcolor{new} 3 & \cellcolor{new} 3 & \cellcolor{new} 3 \nl
      & 4 & & & & 1 & \cellcolor{thm2} 2 & \cellcolor{thm2} 2 & \cellcolor{thm2} 2 & \cellcolor{new} 3 & \cellcolor{new} 3 & \cellcolor{new} 3 & \cellcolor{new} 3 & \cellcolor{thm1} 3 & \cellcolor{thm1} 3 & \cellcolor{thm1} 3 \nl
      & 3 & & & 1 & \cellcolor{thm2} 2 & \cellcolor{thm2} 2 & \cellcolor{base} \textcolor{basetext}{\tm{b1}3} & \cellcolor{new} 3 & \cellcolor{new} 3 & \cellcolor{thm1} 3 & \cellcolor{thm1} 3 & \cellcolor{thm1} 3 & \cellcolor{thm1} 4 & \cellcolor{thm1} 4 & \cellcolor{thm1} 4\tm{thm1} \nl
      & 2 & & 1 & \cellcolor{thm2} 2 & \cellcolor{base} \textcolor{basetext}{\tm{b2}3} & \cellcolor{base} \textcolor{basetext}{\tm{b3}3} & \cellcolor{thm1} 3 & \cellcolor{thm1} 3 & \cellcolor{thm1} 4 & \cellcolor{thm1} 4 & \cellcolor{thm1} 5 & \cellcolor{thm1} 5 & \cellcolor{thm1} 6 & \cellcolor{thm1} 6 & \cellcolor{thm1} 7 \nl
      & 1 & 1 & \cellcolor{thm1} 2 & \cellcolor{thm1} 3 & \cellcolor{thm1} 4 & \cellcolor{thm1} 5 & \cellcolor{thm1} 6 & \cellcolor{thm1} 7 & \cellcolor{thm1} 8 & \cellcolor{thm1} 9 & \cellcolor{thm1} 10 & \cellcolor{thm1} 11 & \cellcolor{thm1} 12 & \cellcolor{thm1} 13 & \cellcolor{thm1} 14 \nl \cline{2-16}
      &   & 1 & 2 & 3 & 4 & 5 & 6 & 7 & 8 & 9 & 10 & 11 & 12 & 13 & 14 \nl
      \multicolumn{16}{c}{\hspace{1.3cm} n }
    \end{array}
  \end{equation*}
  \begin{tikzpicture}[overlay, remember picture]
    \node[left=0.5cm, above=3cm] at (pic cs:b2) (B) {\colorbox{base}{from \cref{ex:3new}}};
    \foreach \x in {b1, b2, b3}
    \draw[black!50, >=stealth, dashed, ->, shorten >=4mm] (B) -- ($(pic cs:\x) + (0.5ex,0.5em)$);
    \node[right=5mm, above=-2mm, rotate=-90] at (pic cs:thm1) (1) {\colorbox{thm1}{\cref{thm:contr-rec}}};
    \node[right=5mm, above=4mm, rotate=-90] at (pic cs:new) (3) {\colorbox{new}{~\cref{thm:contr-3-rows-partial}}};
    \node[right=-4mm, above=6mm] at (pic cs:thm2) (2) {\colorbox{thm2}{\cref{thm:contr-non-cyclic-2}}};
  \end{tikzpicture}
  \caption{The largest number of rows $k$ for which a $(G, k, 0)$ contracted difference matrix is known to exist for at least one abelian group $G$ of order~$2^n$ and exponent~$2^e$.}
  \label{largest-known-size}
\end{figure}

\subsection{New linking systems of difference sets of size~$7$}
\label{sec:link-syst-diff}
Question~1 of \cite[Section~6]{jedwab-li-simon-arxiv} asks whether there are examples of difference matrices over $2$-groups having more rows than those specified in Theorems~\ref{thm:rec} and~\ref{thm:non-cyclic-2}.
The difference matrices described in \cref{prop:lazebnik-thomason} provide such examples. By \cref{defn:cdm}, the four new $(G,3,0)$ contracted difference matrices described in \cref{sec:new-family} immediately give $(G,8,1)$ difference matrices that likewise cannot be obtained from those two theorems (giving, in particular, a second source for a $(\Z_4 \times \Z_2 \times \Z_2, 8, 1)$ difference matrix).

Moreover, we now describe how the four new contracted difference matrices of Section~\ref{sec:new-family} can be used to construct reduced linking systems of difference sets of size 7 in certain abelian $2$-groups $G$ of order $2^{2d+2}$ for which the largest previously known size was~3. To construct such a reduced linking system, it is sufficient by \cref{thm:jls} for $G$ to contain a subgroup $E$ isomorphic to $\Z_2^{d+1}$ such that there is a $(G/E,3,0)$ contracted difference matrix. We may therefore choose $H$ to be any group satisfying the quotient chain condition described in \cref{sec:new-family}, and then choose $G$ to be any abelian group containing a subgroup $E$ isomorphic to $\Z_2^{d+1}$ such that $G/E \cong H$. It again does not seem straightforward to determine the set of all such groups $G$ explicitly (in the absence of an explicit condition for all suitable groups $H$), but we can show the existence of a large set of such groups~$G$.

\begin{corollary}\label{cor:new-linking}
Let $d \ge 2$ and let $e$ be an integer satisfying $2 \le e \le \frac{d+3}{2}$. Then there is at least one abelian group $G$ of order $2^{2d+2}$, rank $d+1$, and exponent $2^e$, such that
there is a reduced linking system of $(v,k,\lambda,n)$-difference sets in $G$ of size~$7$, with $(v,k,\lambda,n)$ as given in~\eqref{eqn:hadamard}.
\end{corollary}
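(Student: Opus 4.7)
The plan is to reduce Corollary~\ref{cor:new-linking} to Theorem~\ref{thm:jls} by exhibiting, for each admissible pair $(d,e)$, an abelian group $G$ of order $2^{2d+2}$, rank $d+1$, and exponent $2^e$ that contains a central subgroup $E \cong \Z_2^{d+1}$ and admits a $(G/E, 3, 0)$ contracted difference matrix. Since $|G/E| = 2^{d+1}$, Definition~\ref{defn:cdm} converts such a matrix into a $(G/E, 8, 1)$ difference matrix, and Theorem~\ref{thm:jls} with $m=8$ then delivers a reduced linking system of size $m-1 = 7$ in $G$ with the required parameters~\eqref{eqn:hadamard}.

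First I would invoke Theorem~\ref{thm:contr-3-rows-partial} with the parameter choice $n = d+1$ and $e' = e-1$. The hypotheses $d \ge 2$ and $2 \le e \le (d+3)/2$ translate into $n \ge 3$ and $1 \le e' \le n/2$, so Theorem~\ref{thm:contr-3-rows-partial} furnishes at least one abelian group $H$ of order $2^{d+1}$ and exponent $2^{e-1}$ together with an $(H, 3, 0)$ contracted difference matrix. Write $H \cong \Z_{2^{b_1}} \times \cdots \times \Z_{2^{b_r}}$ with $b_1 = e-1 \ge b_2 \ge \cdots \ge b_r \ge 1$ and $\sum_i b_i = d+1$, so that $r = \operatorname{rank}(H)$.

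Second I would build $G$ by inflating each cyclic factor of $H$ by a single power of $2$ and padding with $\Z_2$ factors to reach rank $d+1$, namely
\begin{equation*}
G \;=\; \Z_{2^{b_1+1}} \times \cdots \times \Z_{2^{b_r+1}} \times \Z_2^{d+1-r}.
\end{equation*}
A direct check gives $|G| = 2^{2d+2}$, $\operatorname{rank}(G) = d+1$, and exponent $2^{b_1+1} = 2^e$. Let $E$ be the subgroup generated by the unique element of order $2$ in each cyclic factor; then $E \cong \Z_2^{d+1}$, it is central because $G$ is abelian, and the quotient satisfies $G/E \cong \Z_{2^{b_1}} \times \cdots \times \Z_{2^{b_r}} \cong H$. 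Transporting the $(H, 3, 0)$ contracted difference matrix along this isomorphism yields a $(G/E, 3, 0)$ contracted difference matrix, whose $2$-expansion is a $(G/E, 8, 1)$ difference matrix, and Theorem~\ref{thm:jls} completes the argument. The only delicate point is coordinating the three invariants of $G$ (order, rank, exponent) with the structure of the quotient needed by Theorem~\ref{thm:contr-3-rows-partial}; the inflation-and-padding recipe above handles all admissible $(d,e)$ uniformly, which is why the bounds on $e$ in the corollary match exactly the bounds on the exponent parameter of Theorem~\ref{thm:contr-3-rows-partial} shifted by~$1$.
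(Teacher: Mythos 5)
Your proposal is correct and follows essentially the same route as the paper: apply Theorem~\ref{thm:contr-3-rows-partial} with $n=d+1$ and exponent parameter $e-1$ to get $H$ with an $(H,3,0)$ contracted difference matrix, then realize $H$ as $G/E$ for a suitable $G$ of order $2^{2d+2}$, rank $d+1$, and exponent $2^e$ with $E\cong\Z_2^{d+1}$, and finish with Theorem~\ref{thm:jls}. The only difference is that you make explicit the ``inflate each cyclic factor and pad with $\Z_2$'s'' construction of $G$, which the paper leaves implicit behind the remark that $H$ has rank at most $d+1$.
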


\begin{proof}
Applying \cref{thm:contr-3-rows-partial} with $n = d+1$ and $e$ replaced by $e-1$, there is at least one abelian group $H$ of order $2^{d+1}$ and exponent $2^{e-1}$ for which there exists an $(H,3,0)$ contracted difference matrix and therefore an $(H,8,1)$ difference matrix.
Since $H$ has rank at most $d+1$, there is therefore a group $G$ of order $2^{2d+2}$, rank $d+1$, and exponent $2^e$, containing a subgroup $E \cong \Z_{2}^{d+1}$ such that $G/E \cong H$. The result then follows from \cref{thm:jls}.
\end{proof}

The reduced linking systems of difference sets given by \cref{cor:jls}~(i) all have size 3, and those given by \cref{cor:jls}~(ii) (under the stated condition $2 \le e \le \frac{d+3}{2}$) have size~3 when $\frac{d+1}{e-1} < 3$. \cref{cor:new-linking} therefore provides reduced linking systems of difference sets, that are larger than those previously known, for all $d \ge 2$ and all~$e$ satisfying $\frac{d+4}{3} < e \le \frac{d+3}{2}$.

\begin{example}
The largest known size of a reduced linking system of $(256,120,56,64)$-difference sets in an abelian group of order $256$ and exponent $8$ was given as~$3$ in \cite[Table~3]{jedwab-li-simon-arxiv}. Using the above procedure, we can increase this size to $7$ for each of the groups 
\[
\Z_8 \times \Z_2^5, \quad \Z_8 \times \Z_4 \times \Z_2^3, \quad \Z_8 \times \Z_4^2 \times \Z_2 
\]
(but not for the group $\Z_8^2 \times \Z_2^2$) by choosing a subgroup $E$ isomorphic to $\Z_2^4$ such that $G/E \cong \Z_4 \times \Z_2 \times \Z_2$, and using the $(\Z_4 \times \Z_2 \times \Z_2,3,0)$ contracted difference matrix of \cref{ex:3new} to provide a $(\Z_4 \times \Z_2 \times \Z_2,8,1)$ difference matrix for use in \cref{thm:jls}.
\end{example}

We remark that additional reduced linking systems of difference sets of size~7 can be obtained by applying the composition construction of \cref{comp} to the $(Z_4 \times \Z_4, 8, 1)$ difference matrix of \cref{prop:lazebnik-thomason}~(ii) to produce a further infinite family of difference matrices with 8 rows in abelian 2-groups. This infinite family does not arise from the analogous construction for contracted difference matrices, because there is no $(\Z_4 \times \Z_4, 3, 0)$ contracted difference matrix (see \cref{small-2-grp-info}).

\section{Open Questions}
\label{sec:open-questions}

We conclude with three open questions.

The first question concerns how effective the concept of a contracted difference matrix is in explaining the existence pattern of difference matrices in abelian $p$-groups. It is motivated by comparing the results of \cref{elem-ab-1}, \cref{thm:rec}, \cref{thm:non-cyclic-2} for difference matrices with those of \cref{contr-elem-ab-1}, \cref{thm:contr-rec}, \cref{thm:contr-non-cyclic-2}, respectively, for contracted difference matrices.
\begin{question}
  \label{q:contr-implies-diff}
  For which primes $p$, abelian $p$-groups $G$, and integers $k \ge 1$ does there exist a $(G, p^k, 1)$ difference matrix but not a $(G, k, 0)$ contracted difference matrix?
\end{question}
\noindent
The only example currently known that satisfies the conditions of \cref{q:contr-implies-diff} is given by $p=2$ and $G = \Z_4 \times \Z_4$ and $k=3$, from \cref{prop:lazebnik-thomason}~(ii) and the exhaustive search result for $\Z_4 \times \Z_4$ in \cref{small-2-grp-info}. 

The second question concerns the largest number of rows of a difference matrix over an abelian $p$-group. 
\begin{question}
  \label{q:log}
  For which primes $p$ and abelian $p$-groups $G$ is the largest integer $m$ for which a $(G, m, 1)$ difference matrix exists not a power of~$p$?
\end{question}
\noindent
The only example currently known that satisfies the conditions of \cref{q:log} is given by $p=2$ and $G = \Z_8 \times \Z_2$ and $m=5$, from \cref{prop:lazebnik-thomason}~(i).
We also know that $p=3$ and $G=\Z_9 \times \Z_3$ and $k=2$ satisfy the conditions of at least one of Questions~\ref{q:contr-implies-diff} and~\ref{q:log}: by \cref{pan-chang-non-2} there exists a $(\Z_9 \times \Z_3, 4, 1)$ difference matrix, but exhaustive search shows there is no $(\Z_9 \times \Z_3, 2, 0)$ contracted difference matrix.

The third question follows from the observation that the currently known existence pattern for contracted difference matrices over $2$-groups of fixed order, as illustrated in \cref{small-2-grp-info}, appears to favor groups of smaller exponent and larger rank. 

\begin{question}
  \label{q:rank-r}
  Let $G$ be an abelian group of order $2^{n}$, exponent at most $2^{\frac{n}{r-1}}$, and rank at least~$r$. Does there exist a $(G, r, 0)$ contracted difference matrix?
\end{question}

A positive answer to \cref{q:rank-r} for the case $r=2$ is given by \cref{thm:contr-non-cyclic-2}; and a positive answer for the case $r=3$ would allow the words ``for at least one abelian group'' in \cref{thm:contr-3-rows-partial} to be replaced by ``for all abelian groups'', provided that a minimum rank of $3$ is specified.

\section*{Acknowledgements}
We are grateful to Patric {\"O}sterg{\aa}rd for pointing out the paper \cite{lazebnik-thomason}, and to Andrew Thomason for providing an example of a difference matrix attaining the bound given in each of the three parts of \cref{prop:lazebnik-thomason}.

\newpage

\begin{appendices}

\section{Contracted difference matrix examples}
\label{sec:list-contr-diff}

This appendix gives an example $(G,k,0)$ contracted difference matrix containing the largest known number of rows~$k$ (as stated in \cref{small-2-grp-info}), for each abelian $2$-group $G$ of order at most 64.

\begin{longtable}{>{$}l<{$}>{$}l<{$}}
  \label{tab:small-2-grp-examples} \\
  \toprule
  \text{Group} & \text{Matrix} \\
  \midrule \addlinespace
  \Z_{2} & 
  \begin{bmatrix}
    1
  \end{bmatrix} \\
  \addlinespace \midrule \addlinespace
  \Z_{2} \times \Z_{2} & 
  \begin{bmatrix}
    01 & 10 \\
    10 & 11
  \end{bmatrix} \\ \addlinespace
  \Z_{4} & 
  \begin{bmatrix}
    1 & 2
  \end{bmatrix} \\
  \addlinespace \midrule \addlinespace
  \Z_{2} \times \Z_{2} \times \Z_{2} & 
  \begin{bmatrix}
    001 & 010 & 100 \\
    010 & 100 & 011 \\
    100 & 011 & 110
  \end{bmatrix} \\ \addlinespace
  \Z_{4} \times \Z_{2} & 
  \begin{bmatrix}
    01 & 10 & 20 \\
    21 & 01 & 10
  \end{bmatrix} \\ \addlinespace
  \Z_{8} & 
  \begin{bmatrix}
    1 & 2 & 4
  \end{bmatrix} \\
  \addlinespace \midrule \addlinespace
  \Z_{2} \times \Z_{2} \times \Z_{2} \times \Z_{2} & 
  \begin{bmatrix}
    0001 & 0010 & 0100 & 1000 \\
    0010 & 0100 & 1000 & 0011 \\
    0100 & 1000 & 0011 & 0110 \\
    1000 & 0011 & 0110 & 1100
  \end{bmatrix} \\ \addlinespace
  \Z_{4} \times \Z_{2} \times \Z_{2} & 
  \begin{bmatrix}
    001 & 010 & 100 & 200 \\
    010 & 201 & 001 & 100 \\
    211 & 100 & 201 & 210
  \end{bmatrix} \\ \addlinespace
  \Z_{4} \times \Z_{4} & 
  \begin{bmatrix}
    01 & 10 & 02 & 20 \\
    10 & 11 & 20 & 22
  \end{bmatrix} \\ \addlinespace
  \Z_{8} \times \Z_{2} & 
  \begin{bmatrix}
    01 & 10 & 20 & 40 \\
    41 & 01 & 10 & 20
  \end{bmatrix} \\ \addlinespace
  \Z_{16} & 
  \begin{bmatrix}
    1 & 2 & 4 & 8
  \end{bmatrix} \\
  \addlinespace \midrule \addlinespace
  \Z_{2} \times \Z_{2} \times \Z_{2} \times \Z_{2} \times \Z_{2} & 
  \begin{bmatrix}
    00001 & 00010 & 00100 & 01000 & 10000 \\
    00010 & 00100 & 01000 & 10000 & 00101 \\
    00100 & 01000 & 10000 & 00101 & 01010 \\
    01000 & 10000 & 00101 & 01010 & 10100 \\
    10000 & 00101 & 01010 & 10100 & 01101
  \end{bmatrix} \\ \addlinespace
  \Z_{4} \times \Z_{2} \times \Z_{2} \times \Z_{2} & 
  \begin{bmatrix}
    0001 & 0010 & 0100 & 1000 & 2000 \\
    0010 & 0100 & 2001 & 0001 & 1000 \\
    1001 & 2110 & 0001 & 0010 & 0101
  \end{bmatrix} \\ \addlinespace
  \Z_{4} \times \Z_{4} \times \Z_{2} & 
  \begin{bmatrix}
    001 & 010 & 020 & 100 & 200 \\
    021 & 001 & 211 & 010 & 100 \\
    220 & 101 & 200 & 210 & 320
  \end{bmatrix}\\ \addlinespace
  \Z_{8} \times \Z_{2} \times \Z_{2} & 
  \begin{bmatrix}
    010 & 100 & 200 & 001 & 400 \\
    210 & 010 & 100 & 400 & 401
  \end{bmatrix} \\ \addlinespace
  \Z_{8} \times \Z_{4} & 
  \begin{bmatrix}
    01 & 10 & 20 & 02 & 40 \\
    21 & 01 & 10 & 40 & 42
  \end{bmatrix} \\ \addlinespace
  \Z_{16} \times \Z_{2} & 
  \begin{bmatrix}
    01 & 10 & 20 & 40 & 80 \\
    81 & 01 & 10 & 20 & 40
  \end{bmatrix} \\ \addlinespace
  \Z_{32} & 
  \begin{bmatrix}
    1 & 2 & 4 & 8 & (16)
  \end{bmatrix} \\
  \addlinespace \midrule \addlinespace
  \Z_{2} \times \Z_{2} \times \Z_{2} \times \Z_{2} \times \Z_{2} \times \Z_{2} &
  \setlength\arraycolsep{3.2pt}
  \begin{bmatrix}
    000001 & 000010 & 000100 & 001000 & 010000 & 100000 \\
    000010 & 000100 & 001000 & 010000 & 100000 & 000011 \\
    000100 & 001000 & 010000 & 100000 & 000011 & 000110 \\
    001000 & 010000 & 100000 & 000011 & 000110 & 001100 \\
    010000 & 100000 & 000011 & 000110 & 001100 & 011000 \\
    100000 & 000011 & 000110 & 001100 & 011000 & 110000
  \end{bmatrix} \\ \addlinespace
  \Z_{4} \times \Z_{2} \times \Z_{2} \times \Z_{2} \times \Z_{2} & 
  \setlength\arraycolsep{3.2pt}
  \begin{bmatrix}
    00100 & 01000 & 20000 & 00001 & 00010 & 10000 \\
    01000 & 20000 & 01100 & 00010 & 10000 & 00011 \\
    20000 & 01100 & 21000 & 10000 & 00011 & 10010
  \end{bmatrix} \\ \addlinespace
  \Z_{4} \times \Z_{4} \times \Z_{2} \times \Z_{2} & 
  \begin{bmatrix}
    0010 & 0200 & 2000 & 0001 & 0100 & 1000 \\
    0200 & 2000 & 0210 & 0100 & 1000 & 0101 \\
    2000 & 0210 & 2200 & 1000 & 0101 & 1100
  \end{bmatrix} \\ \addlinespace
  \Z_{4} \times \Z_{4} \times \Z_{4} & 
  \begin{bmatrix}
    002 & 020 & 200 & 001 & 010 & 100 \\
    020 & 200 & 022 & 010 & 100 & 011 \\
    200 & 022 & 220 & 100 & 011 & 110
  \end{bmatrix} \\ \addlinespace
  \Z_{8} \times \Z_{2} \times \Z_{2} \times \Z_{2} & 
  \begin{bmatrix}
    4101 & 2000 & 1100 & 0010 & 7111 & 0101 \\
    4010 & 0111 & 2011 & 4001 & 7001 & 0001 \\
    4000 & 7110 & 5100 & 0011 & 5010 & 2011
  \end{bmatrix} \\ \addlinespace
  \Z_{8} \times \Z_{4} \times \Z_{2} & 
  \begin{bmatrix}
    020 & 400 & 010 & 200 & 001 & 100 \\
    400 & 420 & 200 & 210 & 100 & 101
  \end{bmatrix} \\ \addlinespace
  \Z_{8} \times \Z_{8} & 
  \begin{bmatrix}
    04 & 40 & 02 & 20 & 01 & 10 \\
    40 & 44 & 20 & 22 & 10 & 11
  \end{bmatrix} \\ \addlinespace
  \Z_{16} \times \Z_{2} \times \Z_{2} & 
  \begin{bmatrix}
    010 & 800 & 001 & 100 & 200 & 400 \\
    800 & 810 & 401 & 001 & 100 & 200
  \end{bmatrix} \\ \addlinespace
  \Z_{16} \times \Z_{4} & 
  \begin{bmatrix}
    02 & 80 & 01 & 10 & 20 & 40 \\
    80 & 82 & 41 & 01 & 10 & 20
  \end{bmatrix} \\ \addlinespace
  \Z_{32} \times \Z_{2} & 
  \begin{bmatrix}
    01    & 10 & 20 & 40 & 80 & (16)0 \\
    (16)1 & 01 & 10 & 20 & 40 & 80
  \end{bmatrix} \\ \addlinespace
  \Z_{64} &
  \begin{bmatrix}
    1 & 2 & 4 & 8 & (16) & (32)
  \end{bmatrix} \\
  \addlinespace \bottomrule
\end{longtable}

\end{appendices}

\end{document}